\documentclass[11pt]{amsart}

\usepackage{textcmds}
\usepackage{color}
\usepackage{array}
\usepackage{amsfonts}
\usepackage[leqno]{amsmath}
\usepackage{amsthm}
\usepackage{mathrsfs}
\usepackage[all]{xy}

\usepackage[
	hyperindex,
	pagebackref,
	pdftex,
	pdfdisplaydoctitle,
	pdfpagemode=UseNone,
	breaklinks=true,
	extension=pdf,
	bookmarks=false,
	plainpages=false,
	colorlinks,
	linkcolor=linkblue,
	citecolor=linkblue,
	urlcolor=linkblue,
	pdfmenubar=true,
	pdftoolbar=true,
	pdfpagelabels,
	pdfpagelayout=SinglePage,
	pdfview=Fit,
	pdfstartview=Fit
]{hyperref}

	\definecolor{linkblue}{rgb}{0,0.2,0.6}

\usepackage[backrefs,msc-links,nobysame]{amsrefs}

\makeatletter

\def\bib@div@mark#1{%
 \@mkboth{{#1}}{{#1}}%
	}
\def\print@backrefs#1{%
 \space\SentenceSpace$\leftarrow$\csname br@#1\endcsname
}
\catcode`\'=11 
\renewcommand{\PrintAuthors}[1]{%
 \ifx\previous@primary\current@primary
  \sameauthors\@empty
 \else
  \def\current@bibfield{\bib'author}%
		  \PrintNames{}{}{\scshape #1}%
 \fi
}
\catcode`\'=12
\def\MRhref#1#2{%
 \begingroup
  \parse@MR#1 ()\@empty\@nil%
  \href{\MR@url}{\texttt{\@tempd\vphantom{()}}}%
  \ifx\@tempe\@empty
  \else
   \ \href{\MR@url}{\texttt{(\@tempe)}}%
  \fi
 \endgroup
}%
\def\MR#1{%
 \relax\ifhmode\unskip\spacefactor3000 \space\fi
 \begingroup
  \strip@MRprefix#1\@nil
  \edef\@tempa{\@nx\MRhref{MR\@tempa}{\@tempa}}%
 \@xp\endgroup
 \@tempa
}
\makeatother

\numberwithin{equation}{section}

\newtheorem{theorem}[equation]{Theorem}
\newtheorem{theorem*}{Theorem}
\newtheorem{definition}[equation]{Definition}
\newtheorem{claim}[equation]{Claim}
\newtheorem{lemma}[equation]{Lemma}
\newtheorem{corollary}[equation]{Corollary}

\newtheorem{proposition}[equation]{Proposition}

\newtheorem{question}[equation]{Question}
\newtheorem{example}[equation]{Example}
\newtheorem{remark}[equation]{Remark}
\newtheorem{note}[equation]{Remark}

\newcommand{\C}{\mathbb{C}}

\newcommand{\Z}{\mathbb{Z}}

\newcommand{\Q}{\mathbb{Q}}

\newcommand{\Pro}{\mathbb{P}}

\newcommand{\dblq}{{/\!/}}

\newcommand{\F}{\operatorname{F}}
\newcommand{\Nef}{\operatorname{Nef}}
\newcommand{\NS}{\operatorname{NS}}
\newcommand{\Pic}{\operatorname{Pic}}

\newcommand{\rank}{\operatorname{rank}}
\newcommand{\sL}{\mathfrak{sl}}
\newcommand{\Dl}{\mathbb{D}^{\sL_2}_{\ell,(1,\ldots,1)}}
\newcommand{\SL}{\operatorname{SL}}

\newcommand{\vortwo}{\overline{A}_{g}^{\operatorname{Vor}(2)}}

\newcommand{\mc}[1]{\mathcal{#1}}
\newcommand{\ov}[1]{\overline{#1}}


\setlength{\oddsidemargin}{0in}     
\setlength{\evensidemargin}{0in}    
\setlength{\topmargin}{-.5in}       
\setlength{\footskip}{.5in}
\setlength{\textwidth}{6.5in}
\setlength{\textheight}{9in}


\begin{document}

\pagenumbering{arabic}

\title{Conformal blocks divisors on $\ov{M}_{0,n}$ from $\sL_2$}

\author{Valery Alexeev}
\email{alexeev@math.uga.edu}

\author{Angela Gibney}
\email{agibney@math.uga.edu}

\author{David Swinarski}
\email{davids@math.uga.edu}

\subjclass[2000]{Primary 14D21, 14E30 \\Secondary 14D22, 81T40}
\keywords{moduli space, vector bundles, conformal field theory}

\date{\today}

\begin{abstract}
We study a family of semiample divisors on $\ov{M}_{0,n}$ defined using conformal blocks and 
analyze their associated morphisms.  
\end{abstract}

\maketitle

\section{Introduction}
The coarse moduli space $\ov{M}_{g,n}$ is a projective variety whose points correspond to isomorphism classes of Deligne-Mumford stable $n$-pointed curves of genus $g$.    Our collective intuition for what moduli spaces should look like is based on the spaces $\ov{M}_{g,n}$, and these varieties have been studied from many perspectives.    One of the most important problems is to describe all morphisms admitted by $\ov{M}_{g,n}$.  Maps to projective varieties are given by the section rings of divisors.  A divisor $D$ is \textit{semiample} if some multiple of $D$  is base point free; the map associated to a semiample divisor is therefore a morphism.  Semiample divisors lie in the nef cone, the set of divisors that nonnegatively intersect all curves on the variety.   

For the special case when $g=0$, $\ov{M}_{0,n}$ is conjectured by Hu and Keel to be a ``Mori Dream Space,''
which would imply that every nef divisor on $\ov{M}_{0,n}$ is semiample and that the cone of nef divisors is finitely generated  \cite{HuKeel}.    
The major support for the conjecture comes from our understanding of the
space for $n=4,5,6$.  For example, we know that $\ov{M}_{0,6}$ is a
Mori dream space from two perspectives: Castravet gives an explicit constructive proof
in \cite{Castravet}, and the sweeping theory of 
\cite{BCHM} shows that log Fano varieties are Mori
dream spaces.   Unfortunately 
since $\ov{M}_{0,n}$ is not log Fano
for $n \ge 7$, one cannot directly apply the results of \cite{BCHM}, and extending Castravet's approach to larger values of $n$  
seems daunting.  
   Attempts to prove that the conjectural descriptions
of the nef and effective cones are correct seem mired down in the fiercely
combinatorial nature of these moduli spaces.

Very recently Fakhruddin, in \cite{Fakh}, showed that there is a potentially infinite supply of semiample divisors on $\ov{M}_{0,n}$ that arise as first Chern classes of vector bundles which come from the theory of conformal blocks.
  First defined by
Tsuchiya, Ueno, and Yamada, these bundles are specified by  a simple
Lie algebra $\mathfrak{g}$, a positive integer $\ell$ called the
{\em{level}}, and an appropriately chosen $n$-tuple of dominant
weights $\vec{\lambda}$.  The ranks of these vector bundles are
computed by the famous Verlinde formula.     Fakhruddin has given recursive formulas for the classes of these divisors and for their intersection with certain combinatorially defined curves in called $\F$-curves.     In this work we study some of the simplest examples of these divisors, determine where they lie in the cone of nef divisors, and describe the morphisms associated to them.

We consider a family of divisors on $\ov{M}_{0,n}$ determined by the
Lie algebra $\mathfrak{g}=\sL_{2}$.  In this case, the weights $\vec{\lambda}$ can be identified with an $n$-tuple of nonnegative integers less than or equal to the level $\ell$.  We take all the weights to be equal to one, and examine the family one obtains by varying $\ell$.  As we now explain, this gives a collection of numerical equivalence classes of semiample divisors $\{\Dl: 1 \le
  \ell \le  g\}$  on $\ov{M}_{0,n}$ with $n=2(g+1)$ even.
Indeed, if $n$ is odd, then  by Lemma \ref{oddsumlemma} below, one has that
$\mathbb{D}^{\sL_2}_{\ell, (1,\ldots,1)}$ is trivial.
Also, Fakhruddin has shown that if $2\ell \ge \sum_i \lambda_i$, then
$\mathbb{D}^{\sL_2}_{\ell, \vec{\lambda}}$ is trivial.   
In particular, if $\ell \ge g+1$ then the divisor $\mathbb{D}^{\sL_2}_{\ell, (1,\ldots,1)}$ is trivial.

One can work with these divisors as completely combinatorial objects.  We give a formula for intersecting these divisors with a basis of $1$-cycles on the space
and give a closed form expression for their classes (Theorem
\ref{intersect}, Propositions \ref{reducedformula} and
\ref{rlclosedform}).    Using this information, we study the position
of these divisors in the nef cone and study the morphisms they define.

Although these divisors arise from a simple choice of weights, they have many interesting geometric properties.     The symmetric group $S_n$ acts on $\ov{M}_{0,n}$ by permuting the marked points and the divisors $\Dl$ are $S_n$ invariant.  In particular, by \cite{KM} (and \cite{GibneyCompositio}), each of these divisors is big, and hence the morphism it defines is birational.   Divisors that give embeddings lie on the interior of the nef cone.  As we show, each member $\Dl$ of our family lies on the boundary of $\Nef(\ov{M}_{0,n}/S_n)$, and defines a birational contraction of $\ov{M}_{0,n}/S_n$.   In particular, four of the divisors generate extremal rays of the symmetric nef cone (Theorem \ref{extremal}).   

These divisors define geometrically meaningful
maps.  For example:
\begin{itemize}
\item
$\mathbb{D}^{\sL_2}_{1,(1,\ldots,1)}$ defines a morphism from $\ov{M}_{0,n}$ to the Satake compactification of the moduli space of abelian varieties of dimension $g$ (Theorem \ref{Satake}). 
\item  $\mathbb{D}^{\sL_2}_{g-1,(1,\ldots,1)}$ defines the morphism from $\ov{M}_{0,n}$ to a natural (generalized) flip of the GIT quotient $(\mathbb{P}^1)^n\dblq \SL_2$ (Theorem \ref{Dg-1morphism}).
\item  $\mathbb{D}^{\sL_2}_{g,(1,\ldots,1)}$ defines the morphism from $\ov{M}_{0,n}$ to the GIT quotient $(\mathbb{P}^1)^n\dblq \SL_2$ (Theorem \ref{GIT}) .
\end{itemize}  

In addition, we recall that Fakhruddin shows that the morphism given by any $\Dl$ for
$\ell \leq g-1$ factors through Hassett's reduction morphisms (\cite{HassettWeighted})
$\rho_{A}: \ov{M}_{0,n} \rightarrow \ov{M}_{0,A}$ for $A=
(\frac{1}{\ell+1},\ldots,\frac{1}{\ell+1})$ \cite{Fakh}*{Prop. 4.7}. 

Using each of the four divisors $\Dl$, $\ell = 1,2,g-1,g$, we obtain a family of nef
divisors on $\ov{M}_{2(g+1)}$, the moduli space of stable curves of
genus $2(g+1)$; we give these classes explicitly in Proposition \ref{flagpullback}.   

 The Ray Theorem of Keel and McKernan \cite{KM}*{Thm. 1.2} and its extension by Farkas and Gibney \cite{FarkasGibney}*{Thm. 4} 
 enable one to give a combinatorial proof that divisors in what we call the log canonical part of the cone are nef.  If a divisor $D$ is of this form, and if $D$ intersects every
$\operatorname{F}$-curve nonnegatively, then $D$ is nef.  As we show in Proposition \ref{nlc},  there are divisors in our family that do not lie in this part of the cone.  In particular, we know no combinatorial proof that they are semiample.


%

There are two other families of conformal blocks divisors that have
been studied extensively.  In \cite{Fakh}, Fakhruddin considers the
set of divisors given by $\sL_2$, level $\ell=1$ and varying weights
$\vec{\lambda}$.  He shows these divisors form a basis for
$\Pic(\ov{M}_{0,n})$ and that all of the morphisms factor through the
reduction maps from $\ov{M}_{0,n}$ to Hassett's moduli spaces of
stable weighted pointed rational curves.  In \cite{agss}, the set of
divisors $\mathbb{D}^{\sL_n}_{1,j}$ given by $\sL_n$, level $\ell=1$
and symmetric sets of fundamental dominant weights
$\vec{\lambda}=\{\omega_j, \ldots, \omega_j\}$ is considered.  The
authors show that each of these divisors is extremal in the symmetric
nef cone and so defines a morphism to a variety of Picard number one.
In \cite{Giansiracusa}, Giansiracusa describes the images of the morphisms given by the $\mathbb{D}^{\sL_n}_{1,j}$.

{\textit{Outline of paper:}} 
In Section \ref{M} we give definitions and references for divisors and curves on $\ov{M}_{g,n}$.  In Theorem \ref{3curvefamilies} we define three families of independent curves on $\ov{M}_{0,n}$ that we use to  prove that the divisors $\Dl$ are extremal 
in the symmetric nef cone.  In Section  \ref{general} 
we provide references and brief definitions for general conformal blocks.  Some of our main technical lemmas about
$\sL_2$ conformal blocks are proved in Section \ref{sl2bundles}, where we recall the Verlinde formula, as well as the factorization and fusion rules.  In Section \ref{intersectwithbasis} we give the intersection formulas for the $S_n$-invariant divisors $\Dl$
with a basis for the $1$-cycles on $\ov{M}_{0,n}/S_n$.  As an application we show that the set of nontrivial divisors $\Dl$ forms a basis of $\Pic(\ov{M}_{0,n}/S_n)$.   In Section \ref{classes} we prove Proposition \ref{reducedformula}, which
gives a general formula for the classes of the divisors 
$\mathbb{D}^{\sL_2}_{\ell,(1,\ldots,1)}$.  This expression depends on 
ranks of certain $\sL_2$ conformal blocks bundles which we give in  Proposition \ref{rlclosedform}.  In Section \ref{extremalformulae} we give simplified expressions  for the four elements of the family that
generate extremal rays of the symmetric nef cone and we give simplified versions of the classes of some of the divisors
that lie on higher dimensional extremal faces.  In Section \ref{position} we show that the divisors $\Dl$ are extremal in 
the symmetric nef cone and that they reside in the part of the cone
that was not previously well understood.  
Finally, in Section \ref{morphisms} we study morphisms defined by the extremal divisors.

{\bf Acknowledgements: } We would like to thank Boris Alexeev for
writing programs which were used to compute examples of $\sL_2$ conformal
blocks bundles.  

\section{Divisors and curves on the moduli stack}\label{M}
The stack $\ov{\mc{M}}_{g,n}$, which parametrizes flat families of stable curves, reflects the geometry of the variety $\ov{M}_{g,n}$; throughout this paper, we will sometimes work on  the space and sometimes on the stack.  In genus $0$, $\ov{M}_{0,n}$ is actually a fine moduli space, and so these two points of view are equivalent in this particular case.

The conformal blocks divisors are defined on the moduli stack $\ov{\mc{M}}_{g,n}$ as the
determinants of vector bundles of conformal blocks.  We will often express them in terms of the tautological and boundary classes.  In Section \ref{tautological classes}, we record the basic definitions of these classes and refer the reader to \cite{ArbarelloCornalbaCombinatorial} for details.  In Section \ref{independentcurves} we define three collections of curves on $\ov{M}_{0,n}$ that we prove are independent.  These curves are used to analyze the divisors $\Dl$.

\subsection{Divisor classes}\label{tautological classes}
We write $\lambda$ for the first Chern class
of the Hodge bundle, as is standard in the literature.  
  For  $1 \le i \le n$, we denote  by $\sigma_i$ the $n$ sections of the universal family
$\pi: \ov{\mc{M}}_{g,n+1} \longrightarrow \ov{\mc{M}}_{g,n} $.  Then for $\omega_{\pi}$  the rank $1$ relative
dualizing sheaf, one has the tautological classes
$\psi_i=c_1(\sigma_i^*(\omega_{\pi}))$.  We refer to the sum $\Psi=\sum_{i=1}^n\psi_i$ as the total $\Psi$ class.  The divisor $\kappa=\kappa_1=\pi_*(\omega_{\pi}^2)$ is ample.

We write $\delta_0$ for the class of the boundary component $\Delta_0$, the divisor whose general element has a single nonseparating node.  For $0 \le i \le \lfloor \frac{g}{2}\rfloor$, and $J \subset \{1,\ldots,n\}$, let $\delta_{i, J}$ be the class of the boundary divisor $\Delta_{i, J}$.   The general element of $\Delta_{i, J}$ has a single separating node that breaks the curve into two components, one of which is a curve of genus $i$ and has $|J|+1$ marked  points  consisting of an attaching point together with points labeled by the set $J$.
 If $n=0$, it is customary to write $\delta_i$ instead of $\delta_{i,\emptyset}$ and if $g=0$, it is customary to write
$\delta_J$ rather than $\delta_{0,J}$.  By $\Delta$ we mean the sum of all the boundary divisors.

As we often work with $S_n$-invariant divisor classes on $\ov{M}_{0,n}$, we often find it convenient to write
$$B_j=\sum_{\stackrel{J \subset \{1,\ldots,n\}}{|J|=j}}\delta_J.$$
In \cite{KM}*{Thm. 1.3}, the $B_j$ were shown to generate the extremal rays of the cone of effective divisors of the quotient $\ov{M}_{0,n}/S_n$.  These form
a basis for $\Pic(\ov{M}_{0,n})^{S_n}$.

Finally, by \cite{Rahul}*{Theorem 1} the class of the canonical divisor in this basis is 
$$K_{\ov{M}_{0,n}}=\Psi-2\Delta=\sum_{i=1}^{\lfloor \frac{n}{2} \rfloor}
(\frac{i(n-i)}{(n-1)}-2)B_i.$$

\subsection{Three collections of independent curves}\label{independentcurves}

An $\F$-curve on $\ov{M}_{0,n}$ is any curve  that is numerically equivalent to a $1$-dimensional component of the boundary.  Its class depends only on a partition of the marked points into four nonempty sets.    
  As we consider the intersection of $\F$-curves with
  symmetric divisor classes on $\ov{M}_{0,n}$,  we do not need to know the
  partition itself, but only the size of the cells of the partition.
  Thus, a partition $a+b+c+d=n$ of the integer $n$ into four
  positive integers determines an F-curve class, up to $S_{n}$
  symmetry.  We denote such a curve by $F_{a,b,c,d}$, or even more
  briefly by $F_{a,b,c}$, with the fourth integer $d=n-(a+b+c)$ implicit.
Details and a definition of 
  $\F$-curve on $\ov{\mc{M}}_{g,n}$ for all $g$ and all $n$ are given in \cite{GKM}*{Thm. 2.2, Fig. 2.3}.

We next define three families of $\F$-curves on $\ov{M}_{0,n}$ that we show are independent.  These curves are used to demonstrate in Theorem \ref{extremaldivisors} that the divisors $\Dl$ are extremal 
in the symmetric nef cone.

 \begin{theorem}\label{3curvefamilies} Let $n=2g+2$, or $n=2g+3$.  Each of the following three sets consist of independent curves.
 \begin{enumerate}
 \item\label{C1} $\mathscr{C}_1=\{F_{1,1,i}: 1 \le i \le g\}$;
 \item\label{2curves} $\mathscr{C}_2=\{F_{2,2,i}: 1 \le i \le g-1\}$; and
 \item\label{oddcurves} $\mathscr{C}_3=\{F_{3,3,2i+1}: 0 \le i \le k-2\} \cup \{F_{1,1,2i+1}: 0 \le i \le k-1\}$, and $g=2k$ or $g=2k-1$.
 \end{enumerate}
 \end{theorem}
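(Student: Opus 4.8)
The plan is to detect independence numerically. A finite collection of symmetric curve classes on $\ov{M}_{0,n}$ is linearly independent precisely when the matrix of their intersection numbers against the basis $\{B_2,\ldots,B_{\lfloor n/2\rfloor}\}$ of $\Pic(\ov{M}_{0,n})^{S_n}$ has rank equal to the number of curves. Since $\lfloor n/2\rfloor-1=g$ for both $n=2g+2$ and $n=2g+3$, each of $\mathscr{C}_1,\mathscr{C}_2,\mathscr{C}_3$ has at most $g$ elements, and it suffices to exhibit, for each family, a choice of $|\mathscr{C}_m|$ of the divisors $B_j$ for which the corresponding square intersection matrix is nonsingular.

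First I would record the classical intersection rule for an $\F$-curve against a boundary class. If $F=F_{a,b,c,d}$ is cut out by the partition $\{N_1,N_2,N_3,N_4\}$ of the markings, then $F\cdot\delta_J=+1$ when $\{J,J^c\}$ equals one of the three groupings of the four sets into two pairs (the three nodes of the central $\ov{M}_{0,4}$), $F\cdot\delta_J=-1$ when $J$ or $J^c$ is a single $N_i$ (a boundary divisor containing $F$, whose restriction is $-\psi$ at the corresponding node), and $F\cdot\delta_J=0$ otherwise. Summing over all $J$ of a fixed size and folding by $\delta_J=\delta_{J^c}$ expresses $F_{a,b,c,d}\cdot B_j$ as a signed count of which two-group unions and which single groups have size $j$ or $n-j$. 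The feature driving the whole argument is that $F_{1,1,i}\cdot B_{i+1}$, $F_{2,2,i}\cdot B_{i+2}$, and $F_{3,3,i}\cdot B_{i+3}$ are all nonzero (equal to $2$ away from the middle column, coming from the two ways to append a singleton/pair to the third block), while the remaining entries of each row sit in a narrow band of neighbouring columns together with a fixed low-index spike from the pairing $\{N_1,N_2\}$ (in $B_2$, $B_4$, $B_6$ respectively).

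With this in hand, for $\mathscr{C}_1$ I would order the curves by $i=1,\ldots,g$ and pair them against $B_2,\ldots,B_{g+1}$; the nonzero entries $F_{1,1,i}\cdot B_{i+1}$ occupy a shifted diagonal and the remaining entries lie one or two columns away, so the matrix is banded with nonvanishing shifted diagonal. Clearing the off-diagonal band by column operations, or running an induction on $g$, shows the determinant is nonzero; in particular $\mathscr{C}_1$ is in fact a basis. The family $\mathscr{C}_2$ is handled identically using the shifted diagonal $F_{2,2,i}\cdot B_{i+2}$ on columns $B_3,\ldots,B_{g+1}$. Throughout, the case $n=2g+3$ differs from $n=2g+2$ only in which boundary terms get folded across the middle, and is treated by the same bookkeeping.

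The genuinely delicate case is $\mathscr{C}_3$, which I expect to be the main obstacle. Here both subfamilies $\{F_{1,1,2i+1}\}$ and $\{F_{3,3,2i+1}\}$ place their leading entries on even-index divisors, $B_{2i+2}$ and $B_{2i+4}$ respectively, so the two shifted diagonals overlap and the curves cannot be separated by the leading term alone. To resolve this I would interleave the two subfamilies and exploit their differing secondary structure: the $F_{1,1,\ast}$ rows carry their spike in $B_2$ and are otherwise supported near their diagonal, whereas the $F_{3,3,\ast}$ rows carry additional low-degree entries (in $B_3$ and $B_6$) absent from the $F_{1,1,\ast}$ rows, so after reordering rows and columns the intersection matrix becomes block triangular with two diagonal blocks, each nonsingular by the same shifted-diagonal reasoning used for $\mathscr{C}_1$ and $\mathscr{C}_2$. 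Checking that this reordering really produces a triangular block decomposition, uniformly in the parity split $g=2k$ versus $g=2k-1$ that fixes the exact ranges, is the step requiring care; once it is done, nonvanishing of the determinant follows from the earlier computations.
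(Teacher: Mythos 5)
Your reduction of the statement to linear algebra is correct and is in fact what the paper does: since the intersection pairing between $N^1(\ov{M}_{0,n}/S_n)$ and $N_1(\ov{M}_{0,n}/S_n)$ is perfect and the $B_j$ form a basis of the former, the curves are independent exactly when the matrix of intersection numbers against the $B_j$ has full rank, and your description of the support of each row (a $+2$ at $B_{i+1}$, $B_{i+2}$, $B_{i+4}$ respectively, flanking $-1$'s, and a fixed low-index spike) agrees with the formula $\bigl(\sum_k c_k B_k\bigr)\cdot F_{1,1,i}=-c_{i+2}-c_i+c_2+2c_{i+1}$ used in the paper. The gap is that you never verify nonsingularity. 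A band matrix with a nonvanishing shifted diagonal can perfectly well be singular, and here the matrices are not even banded: every row of the $\mathscr{C}_1$ (resp.\ $\mathscr{C}_2$, $\mathscr{C}_3$) matrix carries the spike in column $B_2$ (resp.\ $B_2$, or $B_3$ and $B_6$), so "clearing the off-diagonal band by column operations" is not a proof but a promissory note for exactly the computation that constitutes the paper's argument. There the relations $R\cdot B_i=0$ are summed to get $B=-b_1$, the three-term recursion $b_i=2b_{i-2}-b_{i-4}$ is iterated to express every coefficient as an explicit multiple of $b_1$, and the extra functional $R\cdot\psi=0$ (legitimate, since $\psi$ lies in the span of the $B_j$) finally forces $b_1=0$; the folding across the middle column for $n=2g+2$ versus $n=2g+3$ genuinely changes the last few relations and has to be handled case by case, not waved through.

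For $\mathscr{C}_3$ the proposed shortcut looks not merely unverified but wrong as stated. The rows $F_{1,1,2i+1}$ and $F_{3,3,2i+1}$ place their leading $+2$'s in the \emph{same} set of even columns ($B_{2i+2}$ and $B_{2i+4}$), and both types of rows also have $-1$ entries in odd columns ($B_{2i+1},B_{2i+3}$ versus $B_{2i+1},B_{2i+7}$), so no reordering by parity or by low-degree support produces two triangular diagonal blocks: the two subfamilies are irreducibly coupled. The paper handles this by extracting the mixed relations $R\cdot B_{2j}=2(a_{j-1}+b_{j-2})$ and $R\cdot B_{2j+1}=-(a_{j-1}+b_{j-3}+a_j+b_j)$, combining the global identities $A+a_0+b_0-B=0$ and $R\cdot\psi=A+a_0+b_0=0$ to get $B=0$, and then running an induction that shows $b_j=b_{j-1}+b_{j-2}+b_{j-3}=0$ before concluding $a_j=0$. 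That interleaved induction is precisely the step you flag as "requiring care" and defer; without it the argument for $\mathscr{C}_3$ is incomplete.
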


This leads to the following well-known corollary:

\begin{corollary}\label{abasis} 
Let $n=2(g+1)$ or $n=2(g+1)+1$ and $\mathscr{C}_1$ as in Theorem \ref{3curvefamilies}. 
Then $\mathscr{C}_1$ is a basis for $N_1(\ov{M}_{0,n}/S_n, \mathbb{Q})$.
\end{corollary}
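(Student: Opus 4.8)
The plan is to derive the corollary from Theorem \ref{3curvefamilies} by a pure dimension count, so that no new geometric input is required beyond pinning down $\dim_{\mathbb{Q}} N_1(\ov{M}_{0,n}/S_n,\mathbb{Q})$. First I would note that each $F_{1,1,i}$ is an $S_n$-invariant $1$-cycle class, so it descends to the quotient, and that with $\mathbb{Q}$-coefficients the pullback/pushforward along the finite quotient map identify $N_1(\ov{M}_{0,n}/S_n,\mathbb{Q})$ with the invariant part $N_1(\ov{M}_{0,n},\mathbb{Q})^{S_n}$. By part \eqref{C1} of Theorem \ref{3curvefamilies}, the $g$ classes in $\mathscr{C}_1=\{F_{1,1,i}:1\le i\le g\}$ are already known to be independent in this space; it therefore only remains to show that the ambient space also has dimension $g$.

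To that end I would use that numerical equivalence induces a perfect pairing of $N_1$ with $N^1$ over $\mathbb{Q}$, giving
$$\dim_{\mathbb{Q}} N_1(\ov{M}_{0,n}/S_n,\mathbb{Q})=\rank \Pic(\ov{M}_{0,n})^{S_n}.$$
As recalled in Section \ref{tautological classes}, the classes $B_j$ form a basis of $\Pic(\ov{M}_{0,n})^{S_n}$; since $\delta_J=\delta_{J^c}$ requires $2\le |J|\le n-2$ and $B_1=0$ (there is no stable rational component carrying a single marked point), the independent classes are exactly $B_2,\dots,B_{\lfloor n/2\rfloor}$, so the rank equals $\lfloor n/2\rfloor-1$. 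A check of the two parities then shows $\lfloor n/2\rfloor-1=g$ for both $n=2(g+1)$ and $n=2(g+1)+1$, matching $|\mathscr{C}_1|=g$.

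Having shown that $\mathscr{C}_1$ is a set of $g$ independent vectors inside a $\mathbb{Q}$-vector space of dimension $g$, I would conclude that it is a basis. The only step needing any care is the middle one: one must confirm that passing to $S_n$-invariants (equivalently, to the quotient) does not alter the rank, and that the count of the $B_j$ is correct in both parities. Neither is a genuine obstacle, since the former is standard for finite quotients after inverting $|S_n|$ and the latter is the elementary computation above; the entire content of the corollary is really just the pairing of Theorem \ref{3curvefamilies}\eqref{C1} with this dimension bookkeeping.
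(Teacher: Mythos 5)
Your proposal is correct and follows essentially the same route as the paper: cite Theorem \ref{3curvefamilies}\eqref{C1} for the independence of the $g$ classes in $\mathscr{C}_1$ and observe that the ambient space is $g$-dimensional. The only difference is that the paper simply asserts $\dim_{\mathbb{Q}}\operatorname{NS}(\ov{M}_{0,n}/S_n)_{\Q}=g$, whereas you justify it via duality with $\Pic(\ov{M}_{0,n})^{S_n}$ and the count of the $B_j$; that bookkeeping is accurate in both parities.
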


\begin{proof}[Proof of Corollary \ref{abasis}]  The N\'eron-Severi space of $\operatorname{NS}(\ov{M}_{0,n}/S_n)_{\Q}$ is $g$-dimensional.  By Theorem \ref{3curvefamilies}, $\mathscr{C}_1$ consists of $g$ independent curves.
\end{proof}
 
\begin{proof}[Proof of Theorem \ref{3curvefamilies}]  For a proof that the first family $\mathscr{C}_1$ is independent, see \cite{agss}*{Prop. 4.1}.

To show that $\mathscr{C}_2$ is a collection of independent curves, suppose that $R=\sum_{i=1}^{g}b_i F_{2,2,i}$ is equivalent to zero and put $B=\sum_{i=1}^{g}b_i$.  By intersecting $R$ with the boundary classes $B_i$, one gets the following equations
\begin{enumerate}
\item $0= R \cdot B_2 = -2B -b_2$;
\item $0= R \cdot B_3 = 2 b_{1} -b_3$;
\item $0=R \cdot B_4 = B + 2b_2-b_4$;
\item for  $5 \le i \le g-3$, and if $n=2g+3$ and $i\in \{g-2,g-1\}$, $0= R \cdot B_i = 2 b_{i-2} -b_i-b_{i-4}$;
\item for $n=2g+2$,
\begin{enumerate}
\item $0= R \cdot B_{g-2} = 2 b_{g-4} -b_g-b_{g-2}-b_{g-6}$;
\item $0= R \cdot B_{g-1} = 2 b_{g-3} -2b_{g-1}-b_{g-5}$;
\item $0=R \cdot B_g= b_{g-2} - b_g-b_{g-4}$;
\item $0=R \cdot B_{g+1}= 2b_{g-1} - b_{g-3}$;
\end{enumerate}
\item for $n=2g+3$,
\begin{enumerate}
\item $0=R \cdot B_g= 2b_{g-2} - b_g-b_{g-1}-b_{g-4}$;
\item $0=R \cdot B_{g+1}= 2b_{g-1} - b_{g-3}-b_{g-2}$.
\end{enumerate}
\end{enumerate}
By adding all of these relations together one gets that $B=-b_1$.  We now show that all of the coefficients are a multiple of $b_1$.
Iteration of equation $(4)$ gives that for  $5 \le i \le g-3$, and if $n=2g+3$ and $i\in \{g-2,g-1\}$, then
for $m \le \lfloor \frac{i+1}{2} \rfloor$, one has $b_i=mb_{i-2m+2}-(m-1)b_{i-2m}$.   For $i$ odd, taking $m=\frac{i-1}{2}$, and using equation $(2)$ gives that $b_i=\frac{i+1}{2}b_1$. For $i$ even, taking $m=\frac{i-2}{2}$, and using equations $(3)$ and then $(1)$ gives that 
$$b_i=\frac{i-2}{2}b_4-\frac{i-6}{2} b_2=\frac{i-2}{2}(B+2b_2)-\frac{i-6}{2} b_2 \\=\frac{i-2}{2}B+\frac{i+2}{2} b_2=-\frac{i+6}{2}B=\frac{i+6}{2}b_1.$$
Using the remaining equations it is very easy to get expressions for the rest of the coefficients in terms of $b_1$.  To finish the proof, we intersect $R$ with  the divisor $\psi=\sum_{i=1}^n\psi_i$.  Since $R$ is assumed trivial, $R\cdot \psi=0$.  On the other hand, one can easily check that $\psi$ intersects $R$ in degree $b_1$, proving that $b_1$ must be zero.  We conclude that all the coefficients are zero, and the curves are independent.

To show that $\mathscr{C}_3$ consists of $2k-1$ independent curves,
suppose that $$R=\sum_{i=0}^{k-1}a_i F_{1,1,2i+1} + \sum_{i=0}^{k-2}b_i F_{3,3,2i+1}$$ is equivalent to zero.  Put $A=\sum_{i=0}^{k-1}a_i$ and  $B=\sum_{i=0}^{k-2}b_i$.  By intersecting $R$ with the boundary classes $B_i$, one gets the following equations
\begin{enumerate}
\item $0= R \cdot B_2 = A+2a_0$;
\item $0= R \cdot B_3 = -2B - a_0 -(a_1+b_1)$;
\item $0= R \cdot B_{2j} = 2(a_{j-1}+b_{j-2})$,  for  $2 \le j \le k$ and $j \ne 3$.
\item $0= R \cdot B_{2j+1} = -(a_{j-1}+b_{j-3}+a_j+b_j)$,  for  $2 \le j \le k$;
\item $0=R \cdot B_6 = B + 2(a_2+b_1)$.
\end{enumerate}
By adding all of these relations together one gets that $A+a_0+b_0-B=0$.   Intersecting $R$ with $\psi=\sum_{i=1}^n\psi_i$ gives that $0=A+a_0+b_0$.  Putting these two facts together gives that $B=0$ and using the first equation, which says that $A=-2a_0$, we can write $b_0=a_0$.  We will next show $b_j=0$ for $1 \le j \le k-1$.  We work by induction on $j$, with base case $j=1$.  To begin with, we can see that $a_0=-a_1$. Indeed, we have seen that $b_0=a_0$ and equation $(3)$  says that $b_0=-a_1$.  Now equation $(2)$ says that $b_1=-a_0-a_1$, and so the base case holds.    So we fix $j \in \{2, \ldots, k-1\}$ and  assume that $b_{\ell}=0$ if $\ell < j$.  Now by equations $(4)$,  and then $(3)$,
one has that $b_j=-(a_{j-1}+b_{j-3}+a_j)=b_{j-2}+b_{j-3}+b_{j-1}$, which is zero by our induction hypothesis.
Finally, since $b_j=0$, we may use equation $(3)$ to see that the $a_j$ are zero as well.  
\end{proof}

\section{Conformal blocks}\label{CBbackground}
The facts we use about conformal blocks divisors can be found in \cite{Beauville}, \cite{LooijengaNotes}, \cite{Ueno}, and \cite{Fakh}.  In 
Section \ref{general} we give an informal description of conformal blocks.  In Section \ref{sl2bundles} we give specific results about $\sL_2$ conformal blocks,  including the factorization and fusion rules and other technical lemmas
we use throughout the paper.
\subsection{General definition}\label{general}
Let $\mathfrak{g}$ be a simple Lie algebra, $\ell$ a positive integer called the {\it{level}}, and $\vec{\lambda}=(\lambda_1,\ldots, \lambda_n)$ an $n$-tuple of dominant integral weights for $\mathfrak{g}$ of level $\ell$.   Conformal blocks divisors are determinants of vector bundles $\mathbb{V}(\mathfrak{g},\ell, \vec{\lambda})$ defined on the stack $\ov{\mc{M}}_{g,n}$ which are determined by the data $\mathfrak{g}$, $\ell$, and $\vec{\lambda}$.  We write:
$$\mathbb{D}^{\mathfrak{g}}_{\ell, \vec{\lambda}}=c_1(\mathbb{V}(\mathfrak{g},\ell, \vec{\lambda})).$$  
We often refer to the $\mathbb{D}^{\mathfrak{g}}_{\ell,
  \vec{\lambda}}$ as CB-divisors and the
$\mathbb{V}(\mathfrak{g},\ell, \vec{\lambda})$ as CB-bundles.  The
divisors $\mathbb{D}^{\mathfrak{g}}_{\ell, \vec{\lambda}}$
are also known in the literature as generalized theta divisors.

We give two descriptions of the fiber of the CB-bundle $\mathbb{V}(\mathfrak{g},\ell, \vec{\lambda})$ over a smooth point $(C, \overline{p})=(C, p_{1},\ldots, p_n) \in \mathcal{M}_{g,n}$.
First, a geometric description \cite{LaszloSorger}*{Eq. (1.2.2)}:
$$\mathbb{V}(\mathfrak{g},\ell, \vec{\lambda})|_{(C, \overline{p})} \cong H^0(\mathcal{M}_{\mathfrak{g}}^{par}(C,\overline{p}), \mathcal{L}).$$
\noindent
Here $\mathcal{M}_{\mathfrak{g}}^{par}(C,\overline{p})$ is a moduli stack parametrizing quasi-parabolic $\mathfrak{g}$-bundles on $C$
determined by $\vec{\lambda}=(\lambda_1,\ldots,\lambda_n)$.  The line bundle $\mathcal{L}$ is determined by $\ell$.

Second, we give an algebraic description, which comes from the construction of these CB-bundles using the representation theory of affine Lie algebras.
 To define them, let 
\[\widehat{\mathfrak{g}} = (\ \mathfrak{g} \otimes \C((z_{i})) )\oplus \C c
\]
be the affine Lie algebra associated to $\mathfrak{g}$.  As for finite-dimensional Lie algebras, to each weight $\lambda_{i}$ there is an associated irreducible  $\widehat{\mathfrak{g}}$-module $\mathcal{H}_{\lambda_{i}}$.  Let $\mathcal{H}_{\vec{\lambda}}=\mathcal{H}_{\lambda_{1}} \otimes
 \cdots \otimes \mathcal{H}_{\lambda_{n}}$.
For a smooth point $(C; \{p_1,\ldots, p_n\}) \in \mathcal{M}_{g,n}$, set  $U = C-\{p_{1}, \ldots, p_{n} \}$.  Choose a local coordinate at each point $p_{i}$.  This gives rise to a ring homomorphism $\mathcal{O}(U) \rightarrow \C((z))$ for each $i=1,\ldots,n$ mapping a function $f$ to its Laurent series expansion $f_{p_i}$ around $p_i$.  
  A $\mathfrak{g} \otimes \mathcal{O}(U)$ action on $\mathcal{H}_{\vec{\lambda}}$ is defined by the formula 
\[   (X \otimes f) \cdot (v_{1} \otimes \cdots \otimes v_{n}) = \sum_{i=1}^{n}  v_{1} \otimes \cdots \otimes (X \otimes f_{p_{i}}) v_{i} \otimes \cdots \otimes v_{n},
\]
and the fiber of $\mathbb{V}(\mathfrak{g},\ell, \vec{\lambda})$ over the point $(C, p_{1},\ldots, p_n)$ is the vector space of coinvariants $\mathcal{H}_{\vec{\lambda}}/(\mathfrak{g} \otimes \mathcal{O}(U))\mathcal{H}_{\vec{\lambda}}$.    These fibers are independent of the choice of local coordinate around $p_i$ and vary nicely in families to form an algebraic vector bundle on $\mathcal{M}_{g,n}$.  The construction can be extended to nodal curves, and yields an algebraic vector bundle $\mathbb{V}(\mathfrak{g},\ell, \vec{\lambda})$ on the stack $\ov{\mc{M}}_{g,n}$.

\subsection{$\sL_2$ CB-bundles}\label{sl2bundles}

In this section we recall the Verlinde formula, as well as the factorization and fusion rules
for $\sL_2$ CB-bundles.  Using factorization and the fusion rules, we
prove  Lemmas \ref{oddsumlemma},
\ref{GeneralizedTriangleInequality}, and \ref{mu rank lemma}, which are all results about ranks  of particular $\sL_2$ CB-bundles that considered in this work.  We also prove Lemma \ref{mu lemma}, which gives a simple formula for the degrees of a certain CB bundles on 
$\mathbb{P}^1 \cong \ov{M}_{0,4}$.   

The root system of $\mathfrak{g}=\sL_2$ may be identified with $\Z$,
and  dominant integral weights $\lambda_i$ of
level $\ell$ are simply nonnegative  integers $0 \le \lambda_i \le
\ell$.  Let $ \vec{\lambda}=(\lambda_1,\ldots,\lambda_n)$ be a vector
of dominant integral weights of level $\ell$.  

Throughout this section work only with $\sL_2$, and we work with an
arbitrary (but fixed) level $\ell$.  Therefore we will abbreviate our
notation a little.

\textbf{Notation}  We write 
\begin{equation}
 r_{\vec{\lambda}} := \rank \mathbb{V}(\sL_2,\ell, \vec{\lambda}).
\end{equation}

For $ \vec{\lambda}=(\lambda_1,\ldots,\lambda_n)$, the rank of the vector bundle $\mathbb{V}(\sL_2,\ell, \vec{\lambda})$ is given by the Verlinde formula:
\begin{equation} \label{Verlinde formula}
r_{\vec{\lambda}} =(\frac{\ell+2}{2})^{g-1} \sum_{j=0}^{\ell} 
\frac{\prod_{i=1}^n
  \sin(\frac{(\lambda_i+1)(\lambda_j+1)\pi}{\ell+2})}
{(\sin(\frac{(\lambda_j+1)\pi}{\ell+2}))^{2g+n-2}} .
\end{equation}
\noindent
Although this formula is quite elegant, it is often computationally
more efficient to use the factorization rules.  These may be stated
for any simple Lie algebra $\mathfrak{g}$, but we will only work with $\mathfrak{g}=\sL_2$.

\begin{proposition}[Propagation for CB-bundles]
Let $\vec{\lambda} = (\lambda_1,\ldots,\lambda_n)$, and suppose that
$\lambda_n=0$.  Then $\mathbb{V}(\mathfrak{g},\ell,\vec{\lambda}) =
\pi_n^{*} \mathbb{V}(\mathfrak{g},\ell,\hat{\lambda})$, where
$\hat{\lambda} = (\lambda_1,\ldots,\lambda_{n-1})$ and $\pi_n:
\ov{M}_{g,n} \rightarrow \ov{M}_{g,n-1}$ is the map forgetting the $n$th
marked point.  In particular, $r_{\vec{\lambda}} = r_{\hat{\lambda}}$.
\end{proposition}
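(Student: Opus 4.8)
The statement is the \emph{propagation of vacua} for conformal blocks, and I would prove it by working fiberwise over a smooth pointed curve and then extending over the whole stack. Fix a smooth curve $(C, p_1, \ldots, p_n) \in \mathcal{M}_{g,n}$ with $\lambda_n = 0$, and set $U = C \setminus \{p_1, \ldots, p_n\}$ and $U' = C \setminus \{p_1, \ldots, p_{n-1}\}$, so that $U' = U \cup \{p_n\}$ and $\mathcal{O}(U') \subseteq \mathcal{O}(U)$. Since $\lambda_n = 0$, the factor $\mathcal{H}_{\lambda_n}$ is the level-$\ell$ vacuum module $\mathcal{H}_0$, with highest weight vector $|0\rangle$ satisfying $(\mathfrak{g} \otimes \C[[z]]) \cdot |0\rangle = 0$, and $\mathcal{H}_{\vec{\lambda}} = \mathcal{H}_{\hat{\lambda}} \otimes \mathcal{H}_0$. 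The plan is to show that the map $\alpha \colon \mathcal{H}_{\hat{\lambda}} \to \mathcal{H}_{\vec{\lambda}}$, $w \mapsto w \otimes |0\rangle$, descends to an isomorphism of coinvariant spaces $\bar\alpha \colon \mathcal{H}_{\hat{\lambda}}/(\mathfrak{g}\otimes\mathcal{O}(U'))\mathcal{H}_{\hat{\lambda}} \xrightarrow{\ \sim\ } \mathcal{H}_{\vec{\lambda}}/(\mathfrak{g}\otimes\mathcal{O}(U))\mathcal{H}_{\vec{\lambda}}$, which by the algebraic description above are the fibers of $\mathbb{V}(\mathfrak{g},\ell,\hat{\lambda})$ over $\pi_n(C,\overline{p})$ and of $\mathbb{V}(\mathfrak{g},\ell,\vec{\lambda})$ over $(C,\overline{p})$, respectively. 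Note that the former is also the fiber of $\pi_n^{*}\mathbb{V}(\mathfrak{g},\ell,\hat{\lambda})$ at $(C,\overline{p})$, which is what the asserted isomorphism requires.

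That $\alpha$ descends is the easy step: for $f \in \mathcal{O}(U')$ the function $f$ is regular at $p_n$, so its expansion $f_{p_n}$ lies in $\C[[z]]$ and $(X \otimes f_{p_n})|0\rangle = 0$; hence $(X\otimes f)\cdot(w\otimes|0\rangle) = \alpha\big((X\otimes f)\cdot w\big)$, and because $\mathcal{O}(U')\subseteq\mathcal{O}(U)$ this carries gauge vectors to gauge vectors. The content is that $\bar\alpha$ is bijective. For surjectivity I would prove the spanning lemma $\mathcal{H}_{\vec{\lambda}} = (\mathcal{H}_{\hat{\lambda}}\otimes|0\rangle) + (\mathfrak{g}\otimes\mathcal{O}(U))\mathcal{H}_{\vec{\lambda}}$. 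Since $\mathcal{H}_0$ is spanned over $\mathcal{H}_{\hat{\lambda}}$ by vectors obtained from $|0\rangle$ by applying negative modes $X\otimes z^{-m}$ with $m\geq 1$, it suffices to remove these modes one at a time from the $n$th slot. Given such a mode, Riemann--Roch produces a function $f\in\mathcal{O}(U)$ whose expansion at $p_n$ is $z^{-m} + O(1)$, with poles allowed at $p_1,\ldots,p_{n-1}$; acting by $X\otimes f$ shows that, modulo gauge, the vector carrying this mode in slot $n$ equals a combination of vectors with strictly fewer negative modes in slot $n$ (the regular part of $f_{p_n}$ lowers the filtration, and the poles at the other points feed operators into $\mathcal{H}_{\hat{\lambda}}$). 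An induction on the total mode degree in the $n$th slot then represents every class by an element of $\mathcal{H}_{\hat{\lambda}}\otimes|0\rangle$, giving surjectivity.

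For injectivity I would pass to the dual description (conformal blocks as invariant functionals), the original Tsuchiya--Ueno--Yamada setting: $\bar\alpha$ is injective if and only if every conformal block on $\pi_n(C,\overline{p})$ extends to a block on $(C,\overline{p})$, and such an extension is constructed from the Ward identities by the same mode-pushing procedure used above. The main obstacle is precisely this combinatorial heart: producing global functions on $U$ with prescribed principal part at $p_n$ and controlled poles elsewhere, organizing the induction so that it terminates, and checking that the dual extension is well defined; this is where the geometry of $C$ and the integrability of $\mathcal{H}_0$ enter, and I would cite \cite{Ueno} and \cite{Beauville} for the detailed verification. Finally, these fiberwise isomorphisms are canonical and compatible with degeneration to nodal curves, so they glue to the asserted isomorphism $\mathbb{V}(\mathfrak{g},\ell,\vec{\lambda})\cong\pi_n^{*}\mathbb{V}(\mathfrak{g},\ell,\hat{\lambda})$ of locally free sheaves on $\ov{\mathcal{M}}_{g,n}$. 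The rank equality $r_{\vec{\lambda}} = r_{\hat{\lambda}}$ is then immediate; alternatively, and independently, it falls straight out of the Verlinde formula \eqref{Verlinde formula}, since setting $\lambda_n = 0$ makes one numerator factor equal to $\sin(\frac{(\lambda_j+1)\pi}{\ell+2})$, which cancels one power in the denominator and reduces the $n$-point formula to the $(n-1)$-point formula for $\hat{\lambda}$.
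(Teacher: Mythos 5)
Your sketch is the standard propagation-of-vacua argument (vacuum vector killed by $\mathfrak{g}\otimes\C[[z]]$, surjectivity on coinvariants by Riemann--Roch mode-pushing, injectivity via the dual/Ward-identity description), and it is correct in outline. The paper itself gives no proof of this proposition --- it is recalled as a known fact with references to Beauville, Ueno, and Tsuchiya--Ueno--Yamada --- and those are precisely the sources where the argument you outline is carried out in detail, so your approach matches the paper's implicit one.
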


\begin{proposition}[Factorization for $\sL_2$ CB-bundles] Let
  $\vec{\mu} \cup \vec{\nu}$ be a partition of the vector
  $\vec{\lambda}=(\lambda_1,\ldots,\lambda_n)$ into two vectors each
  of length at least 2.  Then  
$$ r_{\vec{\lambda}}=\sum_{\alpha =0}^{\ell}  r_{\vec{\mu} \cup\alpha}  r_{\vec{\nu} \cup \alpha}.$$
\end{proposition}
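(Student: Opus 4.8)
The plan is to realize the factorization formula as an identity of fiber dimensions at a suitable boundary point of $\ov{M}_{0,n}$, and then to invoke the general factorization theorem for conformal blocks together with the special feature that $\sL_2$-representations are self-dual.

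First I would fix the relevant boundary stratum. Writing $\vec{\mu}=(\lambda_i)_{i\in J}$ and $\vec{\nu}=(\lambda_i)_{i\notin J}$ for the index set $J\subset\{1,\ldots,n\}$ realizing the partition, I would consider a one-nodal curve $C=C_1\cup_p C_2$, where $C_1\cong\Pro^1$ carries the points labeled by $J$ together with the node $p$, and $C_2\cong\Pro^1$ carries the remaining points together with $p$. Since $|J|=|\vec{\mu}|\ge 2$ and $|\vec{\nu}|\ge 2$, each component has at least three special points and is therefore stable; thus $C$ represents a point of the boundary divisor $\Delta_J\subset\ov{M}_{0,n}$. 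Because $\mathbb{V}(\sL_2,\ell,\vec{\lambda})$ is a vector bundle, its rank $r_{\vec{\lambda}}$ equals the dimension of its fiber over this point, so it suffices to compute that fiber.

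Next I would apply the factorization theorem (cf. \cite{Ueno}, \cite{Beauville}, \cite{Fakh}): the fiber of a conformal blocks bundle over a curve with a separating node decomposes as a direct sum, over all dominant weights $\alpha$ of level $\ell$ attached to the two branches of the node, of the tensor product of the fibers on the two normalized components,
$$\mathbb{V}(\sL_2,\ell,\vec{\lambda})|_C\;\cong\;\bigoplus_{\alpha}\;\mathbb{V}(\sL_2,\ell,\vec{\mu}\cup\alpha)|_{C_1}\otimes\mathbb{V}(\sL_2,\ell,\vec{\nu}\cup\alpha^{\dagger})|_{C_2},$$
where $\alpha^{\dagger}$ denotes the weight of the contragredient representation. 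The only $\sL_2$-specific input is then the observation that every finite-dimensional irreducible $\sL_2$-module is self-dual, so $\alpha^{\dagger}=\alpha$ for every weight; combined with the identification of level-$\ell$ dominant weights with the integers $0\le\alpha\le\ell$ recorded above, the index set of the sum becomes $\{0,1,\ldots,\ell\}$.

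Taking dimensions of both sides and using $\dim(U\otimes W)=\dim U\cdot\dim W$ on each summand then yields $r_{\vec{\lambda}}=\sum_{\alpha=0}^{\ell} r_{\vec{\mu}\cup\alpha}\,r_{\vec{\nu}\cup\alpha}$, as claimed. The genuinely substantial ingredient is the factorization theorem itself, which we are importing from the literature; the remaining steps---checking stability of the nodal curve so that $\Delta_J$ is a genuine boundary divisor, using local freeness to pass from a generic fiber to the nodal one, and invoking self-duality to drop the dagger---are routine. The point to be careful about is precisely the self-duality step, since for a general simple Lie algebra the dual weight $\alpha^{\dagger}$ need not equal $\alpha$ and the formula would retain $r_{\vec{\nu}\cup\alpha^{\dagger}}$ in place of $r_{\vec{\nu}\cup\alpha}$.
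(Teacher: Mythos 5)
Your proposal is correct and matches the paper's treatment: the paper states this proposition without proof, importing the general factorization theorem from the cited literature and remarking only that the general statement inserts $\alpha$ and $\alpha^{*}$, with $\alpha^{*}=\alpha$ for $\sL_2$ by self-duality. Your write-up simply makes explicit the standard boundary-point/local-freeness bookkeeping that the paper leaves implicit, and correctly isolates the self-duality of $\sL_2$-modules as the only algebra-specific input.
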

\noindent
(In general, statements of factorization for CB-bundles insert weights $\alpha$ and $\alpha^{*}$, where $*$ denotes the involution on the root lattice defined by $\alpha^{*} = \sigma(\alpha)$, where $\sigma \in W(\mathfrak{g})$ is the longest word in the Weyl group of $\mathfrak{g}$.  For $\mathfrak{g} = \sL_2$, one has $\alpha^*=\alpha$.)

Use of the factorization rules depends 
on the one, two, and three point fusion rules, that is, the ranks of conformal blocks bundles for $n=1$,$2$, and $3$. These are well-known for $n=1$ and $2$ for arbitrary $\mathfrak{g}$ and for $n=3$ when $\mathfrak{g}=\sL_2$; see for instance \cite{Beauville}*{Lemma 4.2, Cor. 4.4}.  We state them for $\sL_2$ below.

\begin{proposition}[Fusion rules for $\sL_2$.] \label{sl2 fusion
    rules}  Write
  $r_{\vec{\lambda}}= \operatorname{rk}(\mathbb{V}(\sL_2,\ell,
  \vec{\lambda}))$.

If $n=1$:
 \begin{displaymath}r_{(\lambda)}= \left\{
\begin{array}{l}
1 \quad \mbox{ if $\lambda= 0$}\\
0 \quad \mbox{ otherwise.}\\
\end{array} \right.
\end{displaymath}
If $n=2$:
 \begin{displaymath}  r_{(\lambda_1,\lambda_2)} = \left\{
\begin{array}{l}
1 \quad \mbox{ if $\lambda_1= \lambda_2$}\\
0 \quad \mbox{ otherwise.}\\
\end{array}\right.
\end{displaymath}
If $n=3$:
\begin{displaymath}r_{(\lambda_1,\lambda_2, \lambda_3)}= \left\{
\begin{array}{l}
1 \quad \mbox{ if $\sum_{i=1}^3\lambda_i \equiv 0 \bmod 2$, $\sum_{i=1}^3\lambda_i \leq 2\ell$, and $\lambda_{i} \leq \frac{1}{2}\sum_{i=1}^3\lambda_i$.}\\
0 \quad \mbox{ otherwise.}\\
\end{array}\right.
\end{displaymath}

\end{proposition}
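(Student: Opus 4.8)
The plan is to reduce all three statements to a single discrete orthogonality relation for the characters of $\sL_2$ and then extract the cases by specialization. Since $\ov{M}_{0,n}$ is a single point for $n\le 3$, these ranks are the dimensions of the genus-zero spaces of conformal blocks, so I would compute them directly by specializing the Verlinde formula \eqref{Verlinde formula} to $g=0$, where the prefactor becomes $\frac{2}{\ell+2}$ and the denominator exponent becomes $n-2$. Writing $\theta_j=\frac{(j+1)\pi}{\ell+2}$ for $0\le j\le\ell$ (so that $\theta_j\in(0,\pi)$ and $\sin\theta_j>0$) and introducing the $\sL_2$ character
\[
\chi_\lambda(\theta)=\frac{\sin\big((\lambda+1)\theta\big)}{\sin\theta},
\]
the identity $\prod_i\sin\big((\lambda_i+1)\theta_j\big)=(\sin\theta_j)^{n}\prod_i\chi_{\lambda_i}(\theta_j)$ rewrites the $g=0$ Verlinde formula as
\[
r_{\vec\lambda}=\frac{2}{\ell+2}\sum_{j=0}^{\ell}(\sin\theta_j)^2\prod_{i=1}^{n}\chi_{\lambda_i}(\theta_j).
\]
The single computational input I would establish first is the orthogonality relation
\[
O(a,b):=\frac{2}{\ell+2}\sum_{j=0}^{\ell}\sin\big((a+1)\theta_j\big)\sin\big((b+1)\theta_j\big)=\delta_{ab}\qquad(0\le a,b\le\ell),
\]
which follows from the product-to-sum formula together with the evaluation of the geometric sum $\sum_j\cos(m\theta_j)$.

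For $n=1$ and $n=2$ the result is then immediate. Since $(\sin\theta_j)^2\chi_\lambda(\theta_j)=\sin\theta_j\sin\big((\lambda+1)\theta_j\big)$, we get $r_{(\lambda)}=O(\lambda,0)=\delta_{\lambda,0}$, and since $(\sin\theta_j)^2\chi_{\lambda_1}(\theta_j)\chi_{\lambda_2}(\theta_j)=\sin\big((\lambda_1+1)\theta_j\big)\sin\big((\lambda_2+1)\theta_j\big)$, we get $r_{(\lambda_1,\lambda_2)}=O(\lambda_1,\lambda_2)=\delta_{\lambda_1,\lambda_2}$. (Alternatively, both cases follow from the Propagation Proposition once $n=3$ is known, by setting weights to zero.)

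The substantive case, and the main obstacle, is $n=3$. Here $(\sin\theta_j)^2\chi_{\lambda_1}\chi_{\lambda_2}\chi_{\lambda_3}=\sin\big((\lambda_3+1)\theta_j\big)\sin\theta_j\,\chi_{\lambda_1}(\theta_j)\chi_{\lambda_2}(\theta_j)$, and I would expand the product of the first two characters by the classical Clebsch--Gordan rule $\chi_{\lambda_1}\chi_{\lambda_2}=\sum_{\mu}\chi_\mu$, the sum running over $\mu=|\lambda_1-\lambda_2|,\,|\lambda_1-\lambda_2|+2,\ldots,\lambda_1+\lambda_2$; this gives $r_{(\lambda_1,\lambda_2,\lambda_3)}=\sum_\mu O(\mu,\lambda_3)$. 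The difficulty is that $\mu$ can exceed $\ell$ (it runs up to $\lambda_1+\lambda_2\le 2\ell$), so $O(\mu,\lambda_3)$ is not simply $\delta_{\mu,\lambda_3}$ and the naive orthogonality must be corrected. The key is the affine reflection identity $\sin\big((2\ell+3-\mu)\theta_j\big)=-\sin\big((\mu+1)\theta_j\big)$, valid because $2(\ell+2)\theta_j\in 2\pi\Z$; it yields $O(\mu,\lambda_3)=-O(2\ell+2-\mu,\lambda_3)$ together with $O(\ell+1,\lambda_3)=0$. Tracking the direct contributions $+\delta_{\mu,\lambda_3}$ (from $\mu\le\ell$) against the reflected contributions $-\delta_{2\ell+2-\mu,\lambda_3}$ (from $\mu\ge\ell+2$), one finds $r_{(\lambda_1,\lambda_2,\lambda_3)}=T-S$, where $T=1$ exactly when $\lambda_3$ lies in the Clebsch--Gordan range (that is, $\sum_i\lambda_i$ even and the triangle inequalities $\lambda_i\le\frac{1}{2}\sum_i\lambda_i$ hold) and $S=1$ exactly when $2\ell+2-\lambda_3$ lies in that range (that is, parity holds and $\sum_i\lambda_i\ge 2\ell+2$). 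The only care needed is the boundary bookkeeping, which in fact resolves cleanly: if a triangle inequality fails, say $\lambda_1>\lambda_2+\lambda_3$, then $\sum_i\lambda_i<2\lambda_1\le 2\ell$, so $S=0$ whenever $T=0$ and no negative values occur. Hence $r_{(\lambda_1,\lambda_2,\lambda_3)}\in\{0,1\}$, equal to $1$ precisely when $\sum_i\lambda_i$ is even, $\sum_i\lambda_i\le 2\ell$, and $\lambda_i\le\frac{1}{2}\sum_i\lambda_i$ for all $i$, and $0$ otherwise, as claimed.
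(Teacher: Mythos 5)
Your argument is correct, and it is a genuinely different route from the paper's: the paper offers no proof at all, simply citing Beauville (Lemma 4.2, Cor.\ 4.4), where these rules are obtained directly from the representation theory of $\widehat{\sL}_2$ (the $n=1,2$ cases for arbitrary $\mathfrak{g}$, and the three-point rule by an explicit analysis of coinvariants/null vectors). Your computation — specializing the Verlinde formula to $g=0$, establishing the discrete orthogonality $O(a,b)=\delta_{ab}$, and handling the three-point case via Clebsch--Gordan plus the affine reflection $\sin\bigl((2\ell+3-\mu)\theta_j\bigr)=-\sin\bigl((\mu+1)\theta_j\bigr)$ — checks out in every detail, including the boundary bookkeeping showing $T=0\Rightarrow S=0$ so no negative values arise, and the observation that $O(\ell+1,\lambda_3)=0$. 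What your approach buys is a short, entirely self-contained, purely trigonometric verification, and as a bonus it exhibits the three-point rank as the ``level-$\ell$ truncation'' of the classical Clebsch--Gordan rule, which is conceptually illuminating. The one caveat worth flagging is a mild logical circularity: in the standard development (including the references this paper leans on), the Verlinde formula \eqref{Verlinde formula} is itself \emph{derived} from factorization together with these very fusion rules, which diagonalize the fusion ring. So if one takes the Verlinde formula as given — as this paper does — your argument is a complete and valid proof; but it cannot serve as an independent foundation for the fusion rules, and you should state explicitly that you are taking \eqref{Verlinde formula} as an input rather than something established downstream of the proposition.
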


We now present four small technical results that are used throughout the paper.

\begin{lemma}[Odd Sum Rule] \label{oddsumlemma} Let $\mathfrak{g} = \sL_2$, let $\ell$ be an arbitrary level, and let $\vec{\lambda} = (\lambda_{1},\ldots,\lambda_{n}).$  If $\sum_{i=1}^{n} \lambda_i$ is odd, then $ r_{\vec{\lambda}}=0.$
\end{lemma}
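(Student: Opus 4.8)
The plan is to prove the Odd Sum Rule by induction on $n$, using the fusion rules (Proposition \ref{sl2 fusion rules}) as base cases and the factorization rule (Proposition [Factorization for $\sL_2$ CB-bundles]) to reduce the length of the weight vector. The key observation driving everything is that factorization inserts a single internal weight $\alpha$ that is shared between the two pieces of the partition, and this weight enters \emph{both} summand ranks; since $\alpha$ is added to one piece and to the other, the total parity of all weights appearing across the two factors differs from $\sum_i \lambda_i$ by $2\alpha$, which is even. This means parity of the weight-sum is the natural invariant to track through the recursion.

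First I would dispose of the small cases directly. For $n=1$, the only way the single weight $\lambda_1$ has odd sum is $\lambda_1$ odd, and the fusion rule gives $r_{(\lambda_1)}=0$ unless $\lambda_1=0$, so an odd weight forces rank zero. For $n=2$, an odd sum $\lambda_1+\lambda_2$ forces $\lambda_1\neq\lambda_2$, hence $r_{(\lambda_1,\lambda_2)}=0$. For $n=3$, the three-point fusion rule explicitly requires $\sum\lambda_i\equiv 0\bmod 2$ for the rank to be nonzero, so an odd sum gives rank zero immediately. These three cases anchor the induction.

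For the inductive step with $n\geq 4$, I would split $\vec{\lambda}$ into $\vec{\mu}\cup\vec{\nu}$ where each piece has length at least $2$ (possible since $n\geq 4$), and write
\begin{equation*}
 r_{\vec{\lambda}}=\sum_{\alpha=0}^{\ell} r_{\vec{\mu}\cup\alpha}\, r_{\vec{\nu}\cup\alpha}.
\end{equation*}
Each summand vanishes: since $\sum_i\lambda_i$ is odd, the parities of $\left(\sum_{\mu}\mu_j\right)$ and $\left(\sum_{\nu}\nu_j\right)$ are opposite, so for every $\alpha$ at least one of the two augmented sums $\sum_{\mu}\mu_j+\alpha$ or $\sum_{\nu}\nu_j+\alpha$ is odd. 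Each augmented vector $\vec{\mu}\cup\alpha$ and $\vec{\nu}\cup\alpha$ has length strictly less than $n$ (because neither $\vec\mu$ nor $\vec\nu$ has all of the weights), so by the induction hypothesis the corresponding rank is zero. Hence every term in the sum vanishes and $r_{\vec{\lambda}}=0$.

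The one point requiring care — and the only place I expect any friction — is the bookkeeping on vector lengths: I must ensure both $\vec\mu\cup\alpha$ and $\vec\nu\cup\alpha$ have length at most $n-1$ so the induction hypothesis applies. This holds because splitting $n\geq 4$ weights into two blocks each of size $\geq 2$ forces each block to have size $\leq n-2$, so after appending $\alpha$ each augmented vector has length $\leq n-1$. With that verified, the parity argument closes the induction cleanly, and the main conceptual content is simply that factorization preserves the parity of the total weight up to the even shift $2\alpha$.
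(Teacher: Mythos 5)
Your proof is correct and follows essentially the same route as the paper's: induction on $n$ with the $\sL_2$ fusion rules as base cases, and factorization plus a parity count on the inserted weight $\alpha$ for the inductive step. The only difference is that the paper splits off a specific pair of weights with even sum (so one factor is killed by the three-point fusion rule and the other by the induction hypothesis at length exactly $n-1$), whereas your arbitrary split into blocks of size at least $2$ needs the strong form of the induction hypothesis applied to both factors, which your length bookkeeping at the end correctly justifies.
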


\begin{proof} 
We argue by induction on $n$.  The cases $n=1$, $2$, and $3$ follow
from the fusion rules for $\sL_2$ (see Propostion \ref{sl2 fusion
  rules} above).  So suppose $n \geq 4$ and that the statement is true for $n-1$.  We can find two weights $a = \lambda_{i}$ and $b = \lambda_{j}$ in $\vec{\lambda}$ such that $a+b$ is even.  
Since the rank $r_{\vec{\lambda}}$ is symmetric with respect to permutations of the
$\lambda_i$, we may assume without loss of generality that $\{i,j\} =
\{1,2\}$.  Let $\vec{\mu} = (\lambda_3,\ldots,\lambda_n)$ be
complementary vector.  Apply the factorization formula with the
partition $\vec{\lambda} = (a,b) \cup \vec{\mu} $:
\begin{equation} \label{factorodd}   
r_{\vec{\lambda}}
=  \sum_{\alpha=0}^{\ell} r_{(a,b, \alpha) }  r_{\vec{\mu} \cup \alpha} .
\end{equation} 
Since $a+b$ is even, whenever $\alpha$ is odd, we have 
$r_{(a,b,\alpha)} =0$ by the three point fusion rules.  We also know
that $\sum_{i=3}^{n} \lambda_{i}$ is odd, since $\sum_{i=1}^{n}
\lambda_i$ is odd and $a+b$ is even.  But then, by induction,
$r_{\vec{\mu} \cup \alpha}  = 0$ whenever $\alpha$ is even.  Therefore
all the summands in (\ref{factorodd}) are zero, and $r_{\vec{\lambda}}
= \operatorname{rk}(\mathbb{V}(\sL_2,\ell, \vec{\lambda}))=0$, as claimed.
\end{proof}

\begin{lemma}[Generalized Triangle
  Inequality]\label{GeneralizedTriangleInequality}  Let $\mathfrak{g}
  = \sL_2$, let $\ell$ be an arbitrary level, and let $\vec{\lambda}=
  (\lambda_{1},\ldots,\lambda_{n}).$   If there exists $i\in
  \{1,\ldots,n\}$ such that $\lambda_{i} > \sum_{j \neq i} \lambda_j$,
  then $r_{\vec{\lambda}} = \operatorname{rk}(\mathbb{V}(\sL_2,\ell, \vec{\lambda})) = 0$.
\end{lemma}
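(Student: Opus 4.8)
The plan is to argue by induction on $n$, exactly paralleling the proof of the Odd Sum Rule (Lemma \ref{oddsumlemma}). The base cases $n=1,2,3$ fall out directly from the fusion rules of Proposition \ref{sl2 fusion rules}: for $n=1$ the hypothesis $\lambda_1 > 0$ forces $r_{(\lambda_1)}=0$; for $n=2$ the hypothesis reads $\lambda_1 \neq \lambda_2$, which gives $r_{(\lambda_1,\lambda_2)}=0$; and for $n=3$ the hypothesis $\lambda_i > \lambda_j + \lambda_k$ (with $j,k$ the other two indices) is precisely the failure of the condition $\lambda_i \le \frac{1}{2}(\lambda_1+\lambda_2+\lambda_3)$, so $r_{(\lambda_1,\lambda_2,\lambda_3)}=0$.

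For the inductive step, I would suppose $n \ge 4$, assume the statement for all vectors of length less than $n$, and assume that some weight---say $\lambda_1$ after relabeling, using the $S_n$-symmetry of $r_{\vec{\lambda}}$---satisfies $\lambda_1 > \sum_{j \ge 2}\lambda_j$. I then apply the factorization formula to the partition $\vec{\lambda} = (\lambda_1,\lambda_2) \cup (\lambda_3,\ldots,\lambda_n)$, which is legitimate since both pieces have length at least $2$ once $n \ge 4$:
\[ r_{\vec{\lambda}} = \sum_{\alpha=0}^{\ell} r_{(\lambda_1,\lambda_2,\alpha)}\, r_{(\lambda_3,\ldots,\lambda_n,\alpha)}. \]
The goal is then to show that every summand vanishes.

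The heart of the argument is that the two factors impose incompatible constraints on $\alpha$. By the three-point fusion rule, $r_{(\lambda_1,\lambda_2,\alpha)} \neq 0$ forces $\lambda_1 \le \lambda_2 + \alpha$, that is, $\alpha \ge \lambda_1 - \lambda_2$. On the other hand, the vector $(\lambda_3,\ldots,\lambda_n,\alpha)$ has length $n-1$, so by the induction hypothesis $r_{(\lambda_3,\ldots,\lambda_n,\alpha)} \neq 0$ forces $\alpha \le \sum_{j \ge 3}\lambda_j$ (otherwise $\alpha$ would exceed the sum of the remaining weights). But the standing hypothesis $\lambda_1 > \lambda_2 + \sum_{j \ge 3}\lambda_j$ rearranges to $\lambda_1 - \lambda_2 > \sum_{j \ge 3}\lambda_j$, so for every $\alpha$ at least one of the two factors is zero. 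Hence $r_{\vec{\lambda}}=0$, completing the induction.

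I do not expect a serious obstacle here; the only thing to get right is the choice of partition. One must isolate the dominant weight $\lambda_1$ together with a single partner, so that the three-point rule supplies the lower bound $\alpha \ge \lambda_1 - \lambda_2$ on the small side while the generalized triangle inequality, in the guise of the induction hypothesis, supplies the upper bound $\alpha \le \sum_{j\ge 3}\lambda_j$ on the large side. Once both bounds are in hand, the contradiction with $\lambda_1 > \sum_{j\ge 2}\lambda_j$ is immediate.
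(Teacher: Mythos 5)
Your argument is correct and is exactly the route the paper intends: the paper's own proof is just the one-line remark that the lemma ``can be proved using factorization and induction, with $n=3$ as the base case,'' and your write-up supplies precisely those details. The key step --- isolating the dominant weight with one partner so that the three-point rule forces $\alpha \ge \lambda_1-\lambda_2$ while the inductive hypothesis forces $\alpha \le \sum_{j\ge 3}\lambda_j$ --- is sound, and the base cases are handled correctly via Proposition \ref{sl2 fusion rules}.
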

\begin{proof}
This can be proved using factorization and induction, with $n=3$ as the base case.
\end{proof}
\begin{lemma}\label{mu rank lemma}
Let $\vec{\mu} = (\mu_{1},\mu_{2},1,1)$ be a vector of weights for
$\sL_2$ satisfying $0 \leq \mu_i \leq \ell$ for $i=1,2$.  Then
\begin{equation}
\operatorname{rk}(\mathbb{V}(\sL_2,\ell, \vec{\mu})) = \left\{\begin{array}{l}
2 \quad \mbox{ if $\mu_1 = \mu_2$ and $\mu_1 \not\in \{0,\ell\}$} \\
1 \quad \mbox{ if $\mu_1 = \mu_2$ and $\mu_1 \in \{0,\ell\}$} \\
1 \quad \mbox{ if $\mu_2 = \mu_1 \pm 2$}. \\
0 \quad \mbox{ otherwise}
\end{array} \right.
\end{equation}
\end{lemma}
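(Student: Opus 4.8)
The plan is to compute $r_{\vec{\mu}}$ directly from the factorization rule together with the three-point fusion rules (Proposition \ref{sl2 fusion rules}). First I would split the four weights into the two pairs $(\mu_1,\mu_2)$ and $(1,1)$, each of length $2$, and apply factorization to write
\[
r_{(\mu_1,\mu_2,1,1)} \;=\; \sum_{\alpha=0}^{\ell} r_{(\mu_1,\mu_2,\alpha)}\, r_{(1,1,\alpha)}.
\]
The key observation is that the factor $r_{(1,1,\alpha)}$ is very restrictive: the three-point fusion rule forces $\alpha$ even and $\alpha \le \tfrac12(2+\alpha)$, i.e.\ $\alpha \le 2$, so only $\alpha=0$ and $\alpha=2$ can contribute (and $\alpha=2$ is admissible only when $\ell \ge 2$). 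Hence the sum collapses to
\[
r_{(\mu_1,\mu_2,1,1)} \;=\; r_{(\mu_1,\mu_2,0)} + r_{(\mu_1,\mu_2,2)},
\]
where the last term is understood to be $0$ when $\ell = 1$.

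Next I would evaluate the two surviving three-point ranks. For the first, either propagation (since one weight is $0$) and the two-point rule, or the three-point rule directly, give $r_{(\mu_1,\mu_2,0)} = 1$ if $\mu_1 = \mu_2$ and $0$ otherwise. For the second, the three-point rule applied to $(\mu_1,\mu_2,2)$ produces the constraints $\mu_1 \equiv \mu_2 \pmod 2$, $|\mu_1 - \mu_2| \le 2$, $\mu_1 + \mu_2 \ge 2$, and $\mu_1 + \mu_2 \le 2\ell - 2$; unwinding these shows $r_{(\mu_1,\mu_2,2)} = 1$ exactly when $\mu_2 = \mu_1 \pm 2$, or when $\mu_1 = \mu_2$ with $\mu_1 \notin \{0,\ell\}$, and is $0$ otherwise.

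Finally I would add the two contributions case by case: if $\mu_1 = \mu_2 \notin \{0,\ell\}$ both terms equal $1$ and the rank is $2$; if $\mu_1 = \mu_2 \in \{0,\ell\}$ only the first term survives and the rank is $1$; if $\mu_2 = \mu_1 \pm 2$ only the second survives and the rank is $1$; and in all remaining cases both terms vanish and the rank is $0$. This reproduces the stated table exactly.

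There is no deep obstacle here; the calculation is routine once the sum has been collapsed to two terms. The only points requiring care are the boundary conditions, namely the admissibility bound $\alpha \le \ell$ (which is what makes $\alpha=2$ drop out when $\ell=1$) and the two endpoint inequalities $\mu_1 + \mu_2 \ge 2$ and $\mu_1 + \mu_2 \le 2\ell - 2$. These are precisely what separate the rank-$1$ diagonal cases $\mu_1 = \mu_2 \in \{0,\ell\}$ from the rank-$2$ diagonal cases, so tracking them correctly is what distinguishes the first two lines of the table.
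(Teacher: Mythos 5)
Your proposal is correct and follows essentially the same route as the paper's proof: factorize along $(\mu_1,\mu_2)\cup(1,1)$, use the fusion rules to collapse the sum to $r_{(\mu_1,\mu_2,0)}+r_{(\mu_1,\mu_2,2)}$, and evaluate the two three-point ranks, with the endpoint cases $\mu_1=\mu_2\in\{0,\ell\}$ handled by the inequalities $\mu_1+\mu_2\ge 2$ and $\mu_1+\mu_2\le 2\ell-2$. Your treatment of the boundary conditions is, if anything, slightly more explicit than the paper's.
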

\begin{proof}
First, we consider $\operatorname{rk}(\mathbb{V}(\sL_2,\ell,
\vec{\mu}))$.  By factorization applied to the partition $\vec{\mu} = (\mu_1, \mu_2) \cup (1, 1)$ we obtain
\begin{equation} \label{factored} r_{ \vec{\mu}}= \sum_{\alpha=0 }^{\ell} 
r_{ (\mu_1, \mu_2, \alpha)} r_{(1,1, \alpha )}.
\end{equation}
By the two and three point fusion rules for $\sL_2$, we have:
\begin{eqnarray*}
r_{(1,1,0)} & = & 1 \\ 
r_{(1,1,1)} & = & 0 \\
r_{(1,1,2)} & = & 1 \\
r_{(1,1,\alpha)} & = & 0, \mbox{ if $\alpha>2$.} 
\end{eqnarray*}
\noindent Thus only two summands on the right hand side of (\ref{factored}) are possibly nonzero.  We have:
\begin{displaymath}
r_{\vec{\mu}} = r_{(\mu_1,\mu_2,0)} r_{(1,1,0)} + r_{(\mu_1,\mu_2,2)}r_{(1,1,2)} = r_{(\mu_1,\mu_2)} + r_{(\mu_1,\mu_2,2)}.
\end{displaymath}
By the two point fusion rules for $\sL_{2}$, $r_{(\mu_1,\mu_2)}=0$
unless $\mu_1 = \mu_2$, and then this rank is $1$.  By the three point
fusion rules, we have $r_{(\mu_1,\mu_2,2)}=0$ unless $\mu_2 \in
\{\mu_{1}-2, \mu_{1},\mu_{1} +2\}$.  Also, $r_{(0,0,2)}=0$, and
$r_{(\ell,\ell,2)} = \operatorname{rk}(\mathbb{V}(\sL_2,\ell, (\ell,\ell,2)))=0$.  The result follows.
\end{proof} 

\begin{lemma}\label{mu lemma}
Let $\vec{\mu} = (\mu_{1},\mu_{2},1,1)$ such that for $i\in \{1,2\}$, $\mu_i \in \mathbb{Z}$ and $0 \le \mu_i \le \ell$.   Consider the CB vector bundle $\mathbb{V}(\sL_2, \ell, \vec{\mu})$ on $\ov{M}_{0,4} \cong \Pro^{1}$.  Then 
\begin{equation}
\operatorname{deg} \mathbb{V}(\sL_2,\ell, \vec{\mu})  = \left\{\begin{array}{l}
0 \quad \mbox{ if   $\vec{\mu} \ne  (\ell,\ell,1,1)$} \\
1 \quad \mbox{ if   $\vec{\mu} =  (\ell,\ell,1,1)$ }.
\end{array} \right.
\end{equation}
\end{lemma}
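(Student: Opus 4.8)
The plan is to compute the degree as an intersection number and then evaluate the genus-zero, four-point degree formula of \cite{Fakh}. Since $\ov{M}_{0,4}\cong\Pro^1$ is the smallest $\F$-curve, $\deg\mathbb{V}(\sL_2,\ell,\vec{\mu})$ is precisely the intersection of the conformal blocks divisor $\mathbb{D}^{\sL_2}_{\ell,\vec{\mu}}$ with this curve. For $\sL_2$ at level $\ell$ Fakhruddin's formula reads
$$\deg\mathbb{V}(\sL_2,\ell,\vec{\mu})=r_{\vec{\mu}}\sum_{i=1}^{4}a_{\mu_i}-\sum_{\{a,b\}\sqcup\{c,d\}=\{1,2,3,4\}}\ \sum_{\alpha=0}^{\ell}a_{\alpha}\,r_{(\mu_a,\mu_b,\alpha)}\,r_{(\alpha,\mu_c,\mu_d)},$$
where $a_m=\tfrac{m(m+2)}{4(\ell+2)}$ is the normalized Casimir eigenvalue attached to the weight $m$ and the outer sum runs over the three ways of splitting $\{1,2,3,4\}$ into two pairs. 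Every quantity on the right is a three-point rank, so the whole computation is driven by the fusion rules of Proposition \ref{sl2 fusion rules} and by Lemma \ref{mu rank lemma}.

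First I would dispose of the vanishing cases: when $\mu_1\neq\mu_2$ and $\mu_2\neq\mu_1\pm2$, Lemma \ref{mu rank lemma} gives $\mathbb{V}=0$ and the degree is trivially $0$. This leaves only $\vec{\mu}=(\mu,\mu,1,1)$ and $\vec{\mu}=(\mu_1,\mu_1\pm2,1,1)$, for which I would record the contributing internal weights one channel at a time. Because two of the four weights equal $1$, the channel pairing $\{\mu_1,\mu_2\}$ against $\{1,1\}$ sees only $\alpha\in\{0,2\}$, as $r_{(1,1,\alpha)}=0$ for $\alpha\notin\{0,2\}$ (exactly the computation in the proof of Lemma \ref{mu rank lemma}); each mixed channel pairing some $\mu_i$ with a $1$ sees only the $\alpha$ within $\pm1$ of that $\mu$, by the three-point rule.

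It then remains to substitute $a_m=\tfrac{m(m+2)}{4(\ell+2)}$ and simplify. In the rank-one cases $\mu_1=\mu_2\in\{0,\ell\}$ and $\mu_2=\mu_1\pm2$, the quadratic dependence of $a_m$ on $m$ makes the positive term $r_{\vec{\mu}}\sum_i a_{\mu_i}$ cancel against the channel sums, so the degree is $0$. The sole exception is $\vec{\mu}=(\ell,\ell,1,1)$: here the constraint $\sum_i\lambda_i\le2\ell$ in the three-point rule forces $r_{(\ell,\ell,2)}=0$, so the $\{\ell,\ell\}$-against-$\{1,1\}$ channel contributes only $\alpha=0$ rather than the $\alpha\in\{0,2\}$ one sees for $\mu<\ell$; this single missing term is exactly what fails to cancel and produces $\deg=1$.

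The main obstacle is the rank-two case $\vec{\mu}=(\mu,\mu,1,1)$ with $0<\mu<\ell$, where all three channels contribute: the $\{\mu,\mu\}$-against-$\{1,1\}$ channel gives $\alpha\in\{0,2\}$ and each of the two mixed channels gives $\alpha\in\{\mu-1,\mu+1\}$. Verifying degree $0$ then amounts to checking the identity $4a_\mu+4a_1=a_2+2a_{\mu-1}+2a_{\mu+1}$, a short but genuine computation with the quadratic $a_m$; one must also confirm that $\mu\pm1$ are honest level-$\ell$ weights, which is where the hypothesis $0<\mu<\ell$ is used. It is precisely the failure of this two-sided range at $\mu=\ell$, forcing $r_{(\ell,\ell,2)}=0$, that distinguishes the degree-$0$ rank-two behaviour from the value $1$ at $\vec{\mu}=(\ell,\ell,1,1)$.
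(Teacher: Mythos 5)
Your proposal is correct and follows essentially the same route as the paper: both evaluate Fakhruddin's four-point degree formula (the paper's Equation (\ref{fakh deg formula})) with $c(\alpha)=\alpha^2/2+\alpha$, use the fusion rules and Lemma \ref{mu rank lemma} to isolate the contributing internal weights in each channel, and verify the quadratic cancellation, with the vanishing of $r_{(\ell,\ell,2)}$ accounting for the single nonzero degree. The only difference is cosmetic — you absorb the prefactor $\tfrac{1}{2(\ell+2)}$ into the Casimir term and write out all the rank-positive cases, whereas the paper works one case explicitly and leaves the rest as "similar."
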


\begin{proof}
We use \cite{Fakh}*{Cor.3.4, Formula (3.9)}, with the notation $\operatorname{rk}(\mathbb{V}(\sL_2,\ell, \vec{\lambda}))=r_{\lambda}$.
\begin{eqnarray} \label{fakh deg formula}
\deg \mathbb{V}(\sL_2,\ell, \vec{\mu}) & =& \frac{1}{2(\ell+h^{\vee})} \left\{ \left\{ r_{\vec{\mu}} \sum_{i=1}^4 c(\mu_{i}) \right\} \right.-  \nonumber \\
&& \left. \left\{ \sum_{\alpha=0}^{\ell} c(\alpha) \left\{ r_{(\mu_1,\mu_2,\alpha)}  r_{(\mu_3,\mu_4,\alpha)} +   r_{(\mu_1,\mu_3,\alpha)}  r_{(\mu_2,\mu_4,\alpha)} + r_{(\mu_1,\mu_4,\alpha)}  r_{(\mu_2,\mu_3,\alpha)} \right\} \right\} \right\}.
\end{eqnarray}
Here $c(\alpha)$ denotes the Casimir scalar associated to $\alpha$,
and $h^{\vee}$ denotes the dual Coxeter number of $\sL_2$.  
For $\sL_2$, these are given by $c(\alpha) = \alpha^2/2 + \alpha$, and $h^{\vee}=2$.  

We only want to compute $\deg \mathbb{V}(\sL_2,\ell, \vec{\mu})$ in
cases where $r_{\vec{\mu}} = \rank \mathbb{V}(\sL_2,\ell, \vec{\mu})
>0$.  These cases are listed in the previous lemma.  Let's check for
instance that if $\mu_{1}=\mu_{2} = \mu, \mu \not\in \{0,\ell\}$,  then $\mathbb{V}(\sL_2,\ell, \vec{\mu}) = 0$:

Recall that the three point ranks for $\sL_2$ are always 0 or 1.  Note that $r_{(\mu_3,\mu_4,\alpha)}=0$ unless $\alpha=0,2$.  However, $c(0)=0$, so the first term of the second line of Fakhruddin's formula contributes only when $\alpha=2$.  By the symmetry of the vector of weights $\mu$, the second two terms of the second line are the same, and $r_{(\mu,1,\alpha)}=0$ unless $\alpha=\mu \pm 1$.  Then we get
\begin{eqnarray*}
\deg\mathbb{V}(\sL_2,\ell, \vec{\mu}) & = & \frac{1}{2(\ell+h^{\vee})} \left\{ \left\{  r_{\vec{\mu}} \sum_{i=1}^4 c(\mu_{i}) - c(2) - 2 c(\mu-1)  -2 c(\mu+1) \right\} \right\} \\
& = & \frac{1}{2(\ell+h^{\vee})} \left\{ \left\{  2( 2c(\mu) + 2c(1)) - c(2) - 2 c(\mu-1)  -2 c(\mu+1) \right\} \right\} \\
& =& 0.
\end{eqnarray*}
The other cases from Lemma \ref{mu rank lemma} where $r_{\vec{\mu}} >0$ can be checked similarly, yielding the result.
\end{proof}

\begin{remark}  
Necessary and sufficient conditions for an $\sL_2$ conformal blocks bundle to have 
$\operatorname{rk}(\mathbb{V}(\sL_2,\ell, \vec{\lambda})) \neq 0$ are given as follows.
\begin{lemma}[Swinarski, 2010] \label{r not zero}
Let $\mathfrak{g}=\sL_2$.  Then $r_{\vec{\lambda}} \neq 0$ if and only
$\Lambda = \sum_{i=1}^{n} \lambda_i$ is even, and for any subset $I \subseteq \{ 1,\ldots,n\}$ with $n-|I|$ odd, the inequality
\begin{equation}\label{gen tri ineq} 
\Lambda - (n-|I|-1) \ell \leq 2 \sum_{i \in I} \lambda_{i}
\end{equation}
is satisfied.
\end{lemma}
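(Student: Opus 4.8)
The plan is to prove each implication by a separate induction on $n$, using the factorization rule to merge two weights at a time and reduce to the three‑point base case $n=3$, where the asserted conditions are checked directly against Proposition~\ref{sl2 fusion rules}. It is convenient first to rewrite the inequality \eqref{gen tri ineq}. Setting $J=\{1,\ldots,n\}\setminus I$, so that $|J|=n-|I|$ is odd, a one‑line manipulation (substitute $\sum_{i\in I}\lambda_i=\Lambda-\sum_{j\in J}\lambda_j$) shows that \eqref{gen tri ineq} is equivalent to
\begin{equation}\label{reformulatedT}
\sum_{j\in J}\lambda_j \ \le\ \sum_{i\notin J}\lambda_i + (|J|-1)\ell \qquad\text{for every odd-cardinality } J\subseteq\{1,\ldots,n\}.
\end{equation}
In this form the case $|J|=1$ is exactly the contrapositive of Lemma~\ref{GeneralizedTriangleInequality}, and for $n=3$ the conditions \eqref{reformulatedT} reproduce the triangle inequalities together with the bound $\Lambda\le 2\ell$ of the three‑point fusion rule, so the base case holds. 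I will call the evenness of $\Lambda$ condition (P) and \eqref{reformulatedT} condition (T).

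For the \emph{``only if''} direction, suppose $r_{\vec\lambda}\neq 0$. Condition (P) is immediate from the Odd Sum Rule (Lemma~\ref{oddsumlemma}). For (T) I induct on $n$: the case $|J|=1$ is Lemma~\ref{GeneralizedTriangleInequality}, so assume $|J|\ge 3$. Choose $p,q\in J$ and factor $\vec\lambda=(\lambda_p,\lambda_q)\cup\vec\mu$; since $r_{\vec\lambda}\neq 0$ there is an $\alpha$ with $r_{(\lambda_p,\lambda_q,\alpha)}\neq 0$ and $r_{\vec\mu\cup\alpha}\neq 0$. Applying the induction hypothesis to $\vec\mu\cup\alpha$ (length $n-1$) with the odd subset $J\setminus\{p,q\}$ gives
$$\sum_{j\in J\setminus\{p,q\}}\lambda_j \ \le\ \sum_{i\notin J}\lambda_i + \alpha + (|J|-3)\ell,$$
and adding $\lambda_p+\lambda_q$ to both sides, together with the three‑point bound $\lambda_p+\lambda_q+\alpha\le 2\ell$, yields \eqref{reformulatedT} for $J$. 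This direction is clean.

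For the \emph{``if''} direction I assume (P) and (T) and induct on $n$, the case $n=3$ being the fusion rule. For $n\ge 4$ I factor off the last two weights, $\vec\lambda=(\lambda_1,\ldots,\lambda_{n-2})\cup(\lambda_{n-1},\lambda_n)$, giving
$$r_{\vec\lambda}=\sum_{\alpha=0}^{\ell} r_{(\lambda_1,\ldots,\lambda_{n-2},\alpha)}\; r_{(\lambda_{n-1},\lambda_n,\alpha)},$$
and seek a single $\alpha$ making both factors nonzero. The right factor is nonzero precisely when $\alpha$ lies in the three‑point range $|\lambda_{n-1}-\lambda_n|\le\alpha\le\min(\lambda_{n-1}+\lambda_n,\,2\ell-\lambda_{n-1}-\lambda_n,\,\ell)$ with $\alpha\equiv\lambda_{n-1}+\lambda_n\pmod 2$; by induction the left factor is nonzero precisely when the merged vector $(\lambda_1,\ldots,\lambda_{n-2},\alpha)$ satisfies (P) and (T). The parity choice of $\alpha$ forces (P) automatically, and writing out (T) for the merged vector, separating the odd subsets $K$ that avoid the merged slot from those that contain it, turns into a family of lower bounds and upper bounds on $\alpha$. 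Thus existence of a good $\alpha$ reduces to showing that an explicit intersection of intervals (the three‑point range together with these induced bounds) contains an integer of the correct parity.

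The main obstacle is exactly this nonemptiness step. Each required comparison ``lower bound $\le$ upper bound'' should collapse, after cancellation, to an instance of (T) for a suitable odd subset of $\{1,\ldots,n\}$ applied to the original $\vec\lambda$: for example the competition between the merged triangle inequality for a subset $K$ and the cap $\alpha\le\lambda_{n-1}+\lambda_n$ is literally \eqref{reformulatedT} for $K$, and the clash at a singleton $K=\{i\}$ is the $|J|=1$ case $2\lambda_i\le\Lambda$. That one cannot simply guess $\alpha$ is shown by $\vec\lambda=(2,0,3,3)$ with $\ell\ge 4$, where $r_{\vec\lambda}\neq 0$ but the naive value $\alpha=|\lambda_{n-1}-\lambda_n|=0$ gives the merged vector $(2,0,0)$ of rank $0$, whereas $\alpha=2$ succeeds. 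I expect the tedious part to be (i) the comparisons between an odd subset $K$ and an even subset $K'$ of the kept indices, where the relevant odd subset of $\{1,\ldots,n\}$ feeding (T) is built from the symmetric difference $K\triangle K'$, and (ii) the parity bookkeeping ensuring that the nonempty interval actually contains an $\alpha$ with $\alpha\equiv\lambda_{n-1}+\lambda_n\pmod 2$. These are routine but lengthy inequality manipulations, and they constitute the heart of the argument.
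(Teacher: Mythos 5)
The paper does not actually prove this lemma: it states only that ``the result may be proved in a straightforward way using induction on $n$ and factorization'' and then omits the argument, so there is no written proof to compare against line by line. Your framework is exactly the one the paper has in mind, and your \emph{only if} direction is complete and correct: the reformulation \eqref{reformulatedT} is right, the $n=3$ base case does match the three-point fusion rule, and the step that combines the inductive bound for $J\setminus\{p,q\}$ in $\vec\mu\cup\alpha$ with $\lambda_p+\lambda_q+\alpha\le 2\ell$ cleanly recovers (T) for $J$.

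The \emph{if} direction, however, is not a proof but a plan, and the part you defer is genuinely the hard part. You must show that the lower bounds on $\alpha$ (from odd subsets $K$ of the kept indices not containing the merged slot, together with $|\lambda_{n-1}-\lambda_n|$) do not exceed the upper bounds (from odd subsets $K'$ containing the merged slot, together with $\lambda_{n-1}+\lambda_n$, $2\ell-\lambda_{n-1}-\lambda_n$, $\ell$), and that the resulting interval contains an integer of the forced parity. Several of these comparisons do collapse to single instances of (T), as you predict: $L_K\le\lambda_{n-1}+\lambda_n$ is (T) for $K$, $L_K\le 2\ell-\lambda_{n-1}-\lambda_n$ is (T) for $K\cup\{n-1,n\}$, and $|\lambda_{n-1}-\lambda_n|\le U_{K'}$ is (T) for $(K'\setminus\{*\})\cup\{n-1\}$. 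But the comparison $L_K\le U_{K'}$ between a lower-bound set $K$ and an upper-bound set $K'$ rearranges to $2\bigl(\sum_{K}\lambda+\sum_{K'\setminus\{*\}}\lambda\bigr)\le 2\sigma+(|K|+|K'|-2)\ell$ with $\sigma=\sum_{i\le n-2}\lambda_i$, and when the symmetric difference $K\triangle(K'\setminus\{*\})$ is a singleton the naive estimates leave a residual term that is not an instance of (T) for any single odd subset; one needs either a more careful combination of several instances of (T) or a smarter choice of which pair of weights to merge (e.g.\ the two smallest). Likewise the parity bookkeeping for the final integer point is asserted, not checked. Until the interval-nonemptiness argument is actually carried out, the ``if'' implication --- which is the content of the lemma beyond Lemmas \ref{oddsumlemma} and \ref{GeneralizedTriangleInequality} --- remains unproved.
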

The main achievement of this lemma is in finding the correct statement; with this in hand, the result may be proved in a straightforward way using induction on $n$ and factorization.  We omit the proof, as we do not use this result in the sequel.
\end{remark}

\section{Intersecting the divisors $\Dl$ with F-curves}\label{intersectwithbasis}
In this section, in Theorem \ref{intersect}, we give a simple formula
for the intersection of the divisors $\Dl$ with a basis of $1$-cycles given by the first family of curves defined in Proposition \ref{C1}.  

\begin{definition}\label{rank}
Suppose that $n$ is even and put  $$r_{\ell}(j,t)
=\operatorname{rank} 
\mathbb{V}(\sL_2, \ell, ( {\underset{\text{j times}}{\underbrace{1,\ldots,1}}},t )).$$
\end{definition}

\begin{theorem}\label{intersect}
$\mathbb{D}^{\sL_2}_{\ell,(1,\ldots,1)} \cdot F_{n-i-2,i 1,1}=r_{\ell}(i,\ell) \cdot r_{\ell}(n-i-2,\ell)$.
\end{theorem}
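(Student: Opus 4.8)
The plan is to apply Fakhruddin's formula for the intersection of a conformal blocks divisor with an $\F$-curve (\cite{Fakh}*{Prop.~2.7}) and then collapse the resulting sum to a single term using the fusion rules and Lemma \ref{mu lemma}. Recall that $F_{n-i-2,i,1,1}$ is obtained by attaching four fixed rational tails --- carrying $n-i-2$, $i$, $1$, and $1$ of the marked points respectively, each of weight $1$ --- to the four sections of the universal family over $\ov{M}_{0,4}\cong\Pro^1$. Restricting $\mathbb{V}(\sL_2,\ell,(1,\ldots,1))$ to this curve, factorization (gluing) decomposes the bundle as a direct sum indexed by the weights $(\mu_1,\mu_2,\mu_3,\mu_4)$ placed at the four attaching nodes, where the summand for a given $4$-tuple has multiplicity equal to the product of the ranks on the four fixed tails and varies over the curve only through the central $4$-pointed bundle. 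Since the tails are constant they contribute no degree, and Fakhruddin's formula reads
$$\mathbb{D}^{\sL_2}_{\ell,(1,\ldots,1)}\cdot F_{n-i-2,i,1,1}=\sum_{\mu_1,\mu_2,\mu_3,\mu_4=0}^{\ell} r_\ell(n-i-2,\mu_1)\,r_\ell(i,\mu_2)\,r_\ell(1,\mu_3)\,r_\ell(1,\mu_4)\,\deg\mathbb{V}(\sL_2,\ell,(\mu_1,\mu_2,\mu_3,\mu_4)),$$
in the notation of Definition \ref{rank}, the degree being computed on the central $\Pro^1$.

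First I would dispose of the two singleton tails. Here $r_\ell(1,\mu_k)=r_{(1,\mu_k)}$ is a two-point rank, which by the fusion rules of Proposition \ref{sl2 fusion rules} equals $1$ if $\mu_k=1$ and $0$ otherwise. Thus every term with $\mu_3\neq 1$ or $\mu_4\neq 1$ vanishes, and the sum collapses to
$$\mathbb{D}^{\sL_2}_{\ell,(1,\ldots,1)}\cdot F_{n-i-2,i,1,1}=\sum_{\mu_1,\mu_2=0}^{\ell} r_\ell(n-i-2,\mu_1)\,r_\ell(i,\mu_2)\,\deg\mathbb{V}(\sL_2,\ell,(\mu_1,\mu_2,1,1)).$$

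To finish, I would invoke Lemma \ref{mu lemma}, which is tailored precisely to this situation: the $4$-pointed bundle $\mathbb{V}(\sL_2,\ell,(\mu_1,\mu_2,1,1))$ on $\ov{M}_{0,4}$ has degree $1$ exactly when $(\mu_1,\mu_2)=(\ell,\ell)$ and degree $0$ otherwise. Hence only the single term $\mu_1=\mu_2=\ell$ survives, and it contributes $r_\ell(n-i-2,\ell)\,r_\ell(i,\ell)$, which is the asserted value.

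I expect the only real work to lie in correctly invoking Fakhruddin's $\F$-curve intersection formula and justifying that the two singleton legs force $\mu_3=\mu_4=1$; after that reduction the computation is entirely carried by Lemma \ref{mu lemma}, whose degree formula was established above expressly for weights of the shape $(\mu_1,\mu_2,1,1)$.
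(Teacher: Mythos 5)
Your proposal is correct and follows essentially the same route as the paper's proof: apply Fakhruddin's degree formula for the restriction of the conformal blocks bundle to an $\F$-curve, use the two-point fusion rules to force $\mu_3=\mu_4=1$ on the two singleton legs, and then invoke Lemma \ref{mu lemma} to see that only the term $\vec{\mu}=(\ell,\ell,1,1)$ contributes, with degree $1$. The only discrepancy is the citation (the paper uses \cite{Fakh}*{Prop.~2.5} rather than Prop.~2.7), which does not affect the argument.
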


\begin{proof} 
We write $P_{\ell} = \{0,1,\ldots,\ell\}$, and write $\vec{\mu} =(\mu_1,\mu_2,\mu_3,\mu_4)$ for elements of $P_{\ell}^{4}$.  We use \cite{Fakh}*{Prop. 2.5}  applied to $\mathbb{V}(\sL_2, \ell, (1,\ldots,1))$ and the symmetric $\operatorname{F}$-curve $F=F_{j_1,j_2,j_3,j_4}$, given
by a partition $n=j_1+j_2+j_3+j_4$. 

 Using the notation from Definition \ref{rank}, this says:
\begin{equation} \label{fakh 2.5} 
\deg (\mathbb{V}(\sL_2, \ell, (1,\ldots,1)) |_{F}) = \sum_{\vec{\mu} \in P_{\ell}^{4}} \deg \mathbb{V}(\sL_2, \ell, \vec{\mu}) \  \  \prod_{k=1}^{4} r_{\ell}(j_k, \mu_k).
\end{equation}

Recall that the two point fusion rules for $\sL_2$ imply that $r_{(a,b)}=0$ unless $a=b$, in which case $r_{(a,b)}=1$.  Since our F-curves have two 1's on the spine, the only nonzero summands in \ref{fakh 2.5} occur when $\mu_3 = \mu_4 =1$.  By Lemma \ref{mu lemma}, we have $\deg \mathbb{V}(\sL_2, \ell, \vec{\mu}) = 0$ if $\vec{\mu} \neq (\ell,\ell,1,1)$, and 1 otherwise.  The formula follows.
\end{proof}

We present an example, which suggests several corollaries to Theorem \ref{intersect}.

\begin{example}
Consider the matrix of intersection numbers $\mathbb{D}^{\sL_2}_{\ell,(1,\ldots,1)} \cdot F_{n-i-2, i,1,1}$ for $n=16$, where in the table
we put $\mathbb{D}^{\sL_2}_{\ell}$ for $\Dl$.

\begin{equation*} \label{n16 example}
\begin{array}{cccccccc}
 & \mathbb{D}^{\sL_2}_{1} & \mathbb{D}^{\sL_2}_{2} & \mathbb{D}^{\sL_2}_{3} & \mathbb{D}^{\sL_2}_{4} & \mathbb{D}^{\sL_2}_{5} & \mathbb{D}^{\sL_2}_{6} & \mathbb{D}^{\sL_2}_{7} \\
F_{1,1,1} & 1 & 0 & 0 & 0 & 0 & 0 & 0 \\
F_{1,1,2} & 0 & 32 & 0 & 0 & 0 & 0 & 0 \\
F_{1,1,3} & 1 & 0 & 55 & 0 & 0 & 0 & 0 \\
F_{1,1,4} & 0 & 32 & 0 & 40 & 0 & 0 & 0 \\
F_{1,1,5} & 1 & 0 & 63 & 0 & 19 & 0 & 0 \\
F_{1,1,6} & 0 & 32 & 0 & 52 & 0 & 6 & 0 \\
F_{1,1,7} & 1 & 0 & 64 & 0 & 25 & 0 & 1 \\ 
\end{array}
\end{equation*}
Note that this matrix has full rank.  This shows that the divisors   
are independent.  Moreover, since in all of the columns there are
curves that intersect the CB divisors in degree zero, this also shows
that the divisors lie on the boundary of the nef cone.  \end{example}

We now derive  six corollaries to Theorem \ref{intersect}.  The first of these corollaries describes the pattern of zeroes observed in the matrix of the example above.

\begin{corollary}[Vanishing intersecting numbers]\label{zeroint} \begin{enumerate}
\item If $i < \ell$, then $\mathbb{D}^{\sL_2}_{\ell,(1,\ldots,1)} \cdot F_{n-i-2, i,1,1}  =0$.
\item If $i\not\equiv \ell \bmod{2} $, then $\mathbb{D}^{\sL_2}_{\ell,(1,\ldots,1)} \cdot F_{n-i-2, i,1,1}  =0$.
\end{enumerate}
\end{corollary}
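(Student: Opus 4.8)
The plan is to reduce everything to Theorem \ref{intersect}, which expresses the intersection number as a product of two ranks,
$r_{\ell}(i,\ell) \cdot r_{\ell}(n-i-2,\ell)$. To prove that this product vanishes it suffices to exhibit one factor that is zero, and I would argue that in both cases the factor $r_{\ell}(i,\ell)$ already vanishes. Recall from Definition \ref{rank} that $r_{\ell}(i,\ell)$ is the rank of the $\sL_2$ CB-bundle attached to the weight vector consisting of $i$ copies of $1$ together with a single weight equal to $\ell$.

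For part (1), I would invoke the Generalized Triangle Inequality (Lemma \ref{GeneralizedTriangleInequality}). In the weight vector $(1,\ldots,1,\ell)$ the sum of all weights other than the final entry equals $i$. Thus when $i < \ell$ the distinguished weight $\ell$ strictly exceeds the sum of the remaining weights, so the hypothesis of Lemma \ref{GeneralizedTriangleInequality} is satisfied and $r_{\ell}(i,\ell)=0$. Hence the product, and therefore the intersection number, is zero.

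For part (2), I would instead apply the Odd Sum Rule (Lemma \ref{oddsumlemma}). The total weight $i+\ell$ of the vector $(1,\ldots,1,\ell)$ is odd precisely when $i \not\equiv \ell \bmod 2$. In that case Lemma \ref{oddsumlemma} gives $r_{\ell}(i,\ell)=0$, so again the intersection number vanishes.

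I do not expect a genuine obstacle: both statements are immediate consequences of the two vanishing lemmas proved earlier, applied to a single factor of the product supplied by Theorem \ref{intersect}. The only point to keep straight is that it is the factor $r_{\ell}(i,\ell)$ (rather than $r_{\ell}(n-i-2,\ell)$) that is convenient to analyze in each case, which is clear from the bookkeeping above.
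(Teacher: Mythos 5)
Your proof is correct and follows exactly the paper's route: the paper likewise reduces to the product formula of Theorem \ref{intersect} and kills the factor $r_{\ell}(i,\ell)$ via the Generalized Triangle Inequality for $i<\ell$ and via the Odd Sum Rule when $i\not\equiv\ell\bmod 2$. Your write-up simply makes explicit the bookkeeping that the paper leaves to the reader.
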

\begin{proof}
For $i < \ell$, use the Generalized Triangle Inequality (Lemma \ref{GeneralizedTriangleInequality}), and for $i \ge \ell$, use the Odd Sum Rule (Lemma  \ref{oddsumlemma}).
\end{proof}

In the next four corollaries, we find formulas for intersection
numbers of four of the divisors $\Dl$ (that is, formulas for the first
two and last two columns of the matrix shown in the example above).  First, we give a lemma computing certain ranks:

\begin{lemma} \label{ranks for int numbers of D1, D2}
\begin{enumerate}
\item Suppose $\ell=1$.  Then for $k \in \Z, k \geq 0$ we have $r_{1}(2k+1,1) = 1$.  
\item Suppose $\ell=2$.  Then for $k \in \Z, k \geq 1$ we have $r_{2}(2k,2) = 2^{k-1}$. 
\item Suppose $\ell=2$.  Then for $k \in \Z, k \geq 0$ we have $r_{2}(2k+1,1) = 2^{k}$. 
\end{enumerate}
\end{lemma}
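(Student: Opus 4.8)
The plan is to establish all three equalities by induction, using in each case the factorization rule to split off a pair of weights, followed by the propagation rule and the two- and three-point fusion rules collected in Proposition~\ref{sl2 fusion rules}. Throughout I use the notation of Definition~\ref{rank}, so that $r_\ell(j,t)$ is the rank attached to $j$ copies of the weight $1$ together with a single weight $t$.

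For part (1), at level $\ell=1$ the only admissible intermediate weights are $0$ and $1$. Factorizing the all-ones vector of length $2k+2$ along the partition $(1,1)\cup(\underbrace{1,\ldots,1}_{2k})$ gives
\[
 r_1(2k+1,1)=\sum_{\alpha=0}^{1} r_{(1,1,\alpha)}\, r_{(\underbrace{1,\ldots,1}_{2k},\alpha)} .
\]
The three-point rules give $r_{(1,1,0)}=1$ and $r_{(1,1,1)}=0$, so only the $\alpha=0$ summand survives, and propagation identifies $r_{(\underbrace{1,\ldots,1}_{2k},0)}$ with $r_1(2k-1,1)$. Hence $r_1(2k+1,1)=r_1(2k-1,1)$ for $k\ge 1$, and the base case $r_1(1,1)=r_{(1,1)}=1$ finishes the induction.

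For parts (2) and (3) I work at level $\ell=2$, where $\alpha$ ranges over $\{0,1,2\}$. The key device is a coupling identity between the two families: factorizing $(\underbrace{1,\ldots,1}_{q},2)$ along $(1,2)\cup(\underbrace{1,\ldots,1}_{q-1})$ and using that the three-point rank $r_{(1,2,\alpha)}$ vanishes unless $\alpha=1$ shows
\[
 r_2(q,2)=r_2(q-1,1)\qquad(q\ge 3),
\]
and the remaining case $q=2$, namely $r_{(1,1,2)}=r_{(1,1)}=1$, is checked directly from the fusion rules. In particular part (2) will follow from part (3). To prove (3), factorize the all-ones vector of length $2k+2$ along $(1,1)\cup(\underbrace{1,\ldots,1}_{2k})$; now $r_{(1,1,\alpha)}=1$ for both $\alpha=0$ and $\alpha=2$, so propagation together with the coupling identity yields
\[
 r_2(2k+1,1)=r_2(2k-1,1)+r_2(2k,2)=2\,r_2(2k-1,1)\qquad(k\ge 1).
\]
With the base case $r_2(1,1)=r_{(1,1)}=1$ this gives $r_2(2k+1,1)=2^{k}$, and then the coupling identity returns $r_2(2k,2)=r_2(2k-1,1)=2^{k-1}$, establishing (2).

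There is no serious obstacle here: once the small fusion values are tabulated, every step is bookkeeping. The single genuine idea is the coupling identity, which comes from choosing the $(1,2)$ partition rather than a $(1,1)$ partition, thereby forcing the summand index $\alpha=1$ and collapsing the sum to one term; I expect that correctly tracking which values of $\alpha$ survive in each factorization will be the only place where care is needed.
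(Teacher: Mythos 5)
Your proof is correct and follows essentially the same route as the paper: the same factorizations (splitting off a $(1,1)$ pair for the all-ones vectors and a $(1,2)$ pair for $r_2(q,2)$), the same fusion-rule bookkeeping, and the same base cases. The only difference is organizational — you isolate $r_2(q,2)=r_2(q-1,1)$ as a standalone identity and run a single induction on $r_2(2k+1,1)$, whereas the paper runs a simultaneous induction on parts (2) and (3); the computations are identical.
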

\begin{proof}
We use induction on $k$ and factorization. 

For the first formula, by the two point fusion rules, $r_{1}(1,1)=1$.  So suppose the formula is true up to $k-1$.  Factorization and applying the Odd Sum Lemma yields
\[  r_{1}(2k+1,1) = r_{1}(2k,0) r_{1}(2,0) + r_{1}(2k,1) r_{1}(2,1) = r_{1}(2(k-1)+1,1)=1.
\]

For the second two formulas: we may check that $r_{2}(2,2) = 1$ and $r_{1}(1,1)=1$.  So suppose these two formulas work up to $k-1$.  Factorization and applying the Odd Sum Lemma yields
\[ r_{2}(2k,2) = \sum_{\mu=0}^{2} r_{2}(2k-1,\mu) r_{(1,2,\mu)} = r_{2}(2k-1,1) = r_{2}(2(k-1)+1,1) = 2^{k-1} 
\]
and
\[  r_{2}(2k+1,1) = r_{2}(2(k-1)+1,1) r_{(1,1,0)} + r_{2}(2k,2) r_{(1,1,2)}  = 2^{k-1} + 2^{k-1} = 2^k.
\]
\end{proof}


\begin{corollary}\label{1}
\begin{equation}
\mathbb{D}^{\sL_2}_{1,(1,\ldots,1)} \cdot F_{a,b,c,d}  = \left\{\begin{matrix}
  1 & \mbox{ $abcd$ odd;}\\
  0 &  \mbox{ $abcd$ even.} 
\end{matrix} \right.
\end{equation}
\end{corollary}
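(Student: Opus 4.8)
The plan is to specialize the general intersection formula \eqref{fakh 2.5} --- the same identity used to prove Theorem \ref{intersect} --- to level $\ell=1$ and to an arbitrary F-curve $F=F_{a,b,c,d}$, so that $(j_1,j_2,j_3,j_4)=(a,b,c,d)$ and $P_1=\{0,1\}$. Since $\mathbb{D}^{\sL_2}_{1,(1,\ldots,1)}\cdot F=\deg\bigl(\mathbb{V}(\sL_2,1,(1,\ldots,1))|_F\bigr)$, this reduces the corollary to evaluating
\[
\sum_{\vec{\mu}\in\{0,1\}^4}\deg\mathbb{V}(\sL_2,1,\vec{\mu})\,\prod_{k=1}^4 r_1(j_k,\mu_k).
\]

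The first step is to show that the only $\vec{\mu}\in\{0,1\}^4$ with $\deg\mathbb{V}(\sL_2,1,\vec{\mu})\neq 0$ is $\vec{\mu}=(1,1,1,1)$, for which the degree equals $1$. I would organize this by the parity of $\sum_i\mu_i$. If this sum is odd, the Odd Sum Rule (Lemma \ref{oddsumlemma}) forces $r_{\vec{\mu}}=0$, so the bundle and hence its degree vanish. This leaves the cases of zero, two, or four entries equal to $1$. For four ones, note that $(1,1,1,1)=(\ell,\ell,1,1)$ when $\ell=1$, so Lemma \ref{mu lemma} gives degree $1$. For two ones, after permuting the marked points (an automorphism of $\ov{M}_{0,4}\cong\mathbb{P}^1$, which preserves the degree) I may assume $\vec{\mu}=(0,0,1,1)$, and Lemma \ref{mu lemma} gives degree $0$ since $(0,0,1,1)\neq(1,1,1,1)$. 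For $\vec{\mu}=(0,0,0,0)$, propagation identifies $\mathbb{V}(\sL_2,1,(0,0,0,0))$ with a bundle pulled back along $\ov{M}_{0,4}\to\ov{M}_{0,3}$ from a point, hence trivial and of degree $0$.

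With this, the sum collapses to the single term $\vec{\mu}=(1,1,1,1)$, giving
\[
\mathbb{D}^{\sL_2}_{1,(1,\ldots,1)}\cdot F_{a,b,c,d}=\prod_{k=1}^4 r_1(j_k,1)=r_1(a,1)\,r_1(b,1)\,r_1(c,1)\,r_1(d,1).
\]
The second step is to compute $r_1(j,1)$. For $j$ odd, Lemma \ref{ranks for int numbers of D1, D2}(1) gives $r_1(j,1)=1$; for $j$ even, the vector $(\underbrace{1,\ldots,1}_{j},1)$ has odd total weight $j+1$, so the Odd Sum Rule gives $r_1(j,1)=0$. Hence each factor equals $1$ when the corresponding cell size is odd and $0$ otherwise, so the product is $1$ exactly when all of $a,b,c,d$ are odd (that is, $abcd$ odd) and is $0$ as soon as one of them is even (that is, $abcd$ even), which is the claim.

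The routine computations here are trivial; the only step requiring care is the classification of which $\vec{\mu}$ contribute a nonzero degree, and in particular the two configurations --- $(0,0,0,0)$ and the reorderings of the two-ones vectors --- that do not literally have the form $(\mu_1,\mu_2,1,1)$ demanded by Lemma \ref{mu lemma}. These are dispatched by the permutation-invariance of the degree on $\ov{M}_{0,4}$ together with propagation, and I expect this bookkeeping to be the only (minor) obstacle.
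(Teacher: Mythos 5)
Your argument is correct and is essentially the paper's own proof: both specialize Fakhruddin's formula (\ref{fakh 2.5}) to $\ell=1$ and combine the Odd Sum Rule, propagation, Lemma \ref{mu lemma}, and Lemma \ref{ranks for int numbers of D1, D2} to reduce the sum to the single term $\vec{\mu}=(1,1,1,1)$. The only difference is organizational --- you first classify which $\vec{\mu}$ have nonzero degree and then evaluate the ranks, while the paper splits on the parity of $abcd$ and, in the even case, uses the vanishing of a rank factor to avoid classifying the degrees at all --- but the ingredients and the conclusion are identical.
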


\begin{proof}
Suppose first that $abcd$ is even.  Then at least one of the four integers, say $a$, is even.  Then when we apply formula (\ref{fakh 2.5}) to compute $\mathbb{D}^{\sL_2}_{1,(1,\ldots,1)} \cdot F_{a,b,c,d}$, to get a nonzero summand, we must have $\mu_1 = 0$ to have $r_{\ell}(a,\mu_1) \neq 0$.  (By the Odd Sum Lemma, we need $\mu_1$ even, but $P_{\ell} = \{0,1\}$ since $\ell=1$.)  Since $\mu_1=0$, by propagation, we know $\mathbb{V}(\sL_2, \ell, \vec{\mu})$ is a pullback from $\ov{M}_{0,3} = pt$.  Hence $\deg \mathbb{V}(\sL_2, 1, \vec{\mu}) = 0$.   

Now suppose that $abcd$ is odd.  Then the only nonzero summand in formula (\ref{fakh 2.5}) occurs when $\vec{\mu} = (1,1,1,1)$.  We can compute $\deg \mathbb{V}(\sL_2, 1, (1,1,1,1)) = 1$, and by Lemma \ref{ranks for int numbers of D1, D2} above, $r_{1}(a, 1)r_{1}(b, 1)r_{1}(c, 1)r_{1}(d, 1)=1$.  
\end{proof}

\begin{corollary}\label{2}
\begin{equation}
\mathbb{D}^{\sL_2}_{2,(1,\ldots,1)} \cdot F_{a,b,c,d}  = \left\{\begin{matrix}
  0 & \mbox{ $abcd$ odd;}\\
  2^{g-2} &  \mbox{ $abcd$ even.} 
\end{matrix} \right.
\end{equation}
\end{corollary}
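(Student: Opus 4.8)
The plan is to apply formula (\ref{fakh 2.5}) to the curve $F = F_{a,b,c,d}$, exactly as in the proofs of Theorem \ref{intersect} and Corollary \ref{1}, but now with $\ell = 2$ so that $P_2 = \{0,1,2\}$. Writing $(j_1,j_2,j_3,j_4) = (a,b,c,d)$, the intersection number becomes $\sum_{\vec{\mu} \in P_2^4} \deg\mathbb{V}(\sL_2, 2, \vec{\mu}) \prod_{k=1}^4 r_2(j_k, \mu_k)$. The first reduction is a parity observation: by the Odd Sum Rule (Lemma \ref{oddsumlemma}), $r_2(j_k, \mu_k) = 0$ unless $j_k \equiv \mu_k \bmod 2$. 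Hence in any nonzero summand $\mu_k = 1$ whenever $j_k$ is odd, while $\mu_k \in \{0,2\}$ whenever $j_k$ is even. Since $n$ is even, the number of odd cells among $a,b,c,d$ is $0$, $2$, or $4$; the case $abcd$ odd is exactly ``all four cells odd,'' and $abcd$ even is ``$0$ or $2$ odd cells.''

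The key computational input is a complete determination of which $\vec{\mu} \in \{0,1,2\}^4$ have $\deg\mathbb{V}(\sL_2, 2, \vec{\mu}) \neq 0$. I would compute these from Fakhruddin's formula (\ref{fakh deg formula}) with $\ell = 2$, $h^{\vee} = 2$, $c(0) = 0$, $c(1) = 3/2$, $c(2) = 4$, evaluating the three- and four-point ranks that appear via the level-$2$ fusion and factorization rules. The expected outcome is that, up to permutation, the only nonzero degrees are $\deg\mathbb{V}(\sL_2, 2, (2,2,1,1)) = 1$ (already supplied by Lemma \ref{mu lemma}) and $\deg\mathbb{V}(\sL_2, 2, (2,2,2,2)) = 2$, with every other tuple giving $0$. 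The degree-$2$ value for $(2,2,2,2)$ is the one genuinely new ingredient, since that tuple lies outside the range of Lemma \ref{mu lemma}: here $r_{(2,2,2,2)} = 1$, the subtracted sum vanishes because $r_{(2,2,2)} = 0$ (the sum of its weights exceeds $2\ell$), and one is left with $\tfrac{1}{8}\bigl(1 \cdot 4c(2)\bigr) = 2$.

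Combining these two inputs finishes the count. When $abcd$ is odd all cells are odd, forcing $\vec{\mu} = (1,1,1,1)$, whose degree is $0$, so the intersection is $0$. When $abcd$ is even I would split into the two remaining parity types. If exactly two cells are odd, the parity constraint forbids $(2,2,2,2)$ and admits exactly one permutation of $(2,2,1,1)$, namely $1$'s on the two odd cells and $2$'s on the two even cells, so the intersection equals $r_2(a,1)\,r_2(b,1)\,r_2(c,2)\,r_2(d,2)$ for the appropriate labelling. If all four cells are even, the parity constraint rules out every permutation of $(2,2,1,1)$ and leaves only $\vec{\mu} = (2,2,2,2)$, contributing $2\prod_k r_2(j_k, 2)$.

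Finally I would evaluate these products using Lemma \ref{ranks for int numbers of D1, D2}, which gives $r_2(2k+1,1) = 2^k$ and $r_2(2k,2) = 2^{k-1}$. In the two-odd-cell case the exponents add to $\tfrac{a-1}{2} + \tfrac{b-1}{2} + \bigl(\tfrac{c}{2}-1\bigr) + \bigl(\tfrac{d}{2}-1\bigr) = \tfrac{n}{2} - 3 = g - 2$; in the all-even case $2\prod_k r_2(j_k,2) = 2 \cdot 2^{\,n/2 - 4} = 2^{g-2}$. Both parity types thus yield $2^{g-2}$, as claimed. The main obstacle is precisely the degree table, especially getting the $(2,2,2,2)$ value equal to $2$ and verifying that no other configuration contributes; once that is in hand the remaining bookkeeping is routine and parallels Corollary \ref{1}.
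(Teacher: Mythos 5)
Your proposal is correct and follows essentially the same route as the paper's proof: Fakhruddin's restriction formula, the Odd Sum Rule to force the parity of each $\mu_k$, propagation to kill any summand with a zero weight, the degree values $\deg \mathbb{V}(\sL_2,2,(1,1,1,1))=0$, $\deg \mathbb{V}(\sL_2,2,(2,2,1,1))=1$, $\deg \mathbb{V}(\sL_2,2,(2,2,2,2))=2$, and the rank formulas of Lemma \ref{ranks for int numbers of D1, D2}. The only (welcome) difference is that you spell out the computation of $\deg \mathbb{V}(\sL_2,2,(2,2,2,2))=2$, which the paper merely asserts.
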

\begin{proof}
Suppose that $abcd$ is odd.  If any $\mu_i$ is even, then $r_{\ell}(a,\mu_i) = 0$ by the Odd Sum Lemma.  We have $P_{\ell} = \{0,1,2\}$ since $\ell=2$, so we only possibly get a nonzero summand in formula (\ref{fakh 2.5}) when $\vec{\mu} = (1,1,1,1)$.  We can compute $\deg \mathbb{V}(\sL_2, 2, (1,1,1,1)) = 0$, so in fact this summand is zero, too.  

Suppose next that $a,b$ are even while $c,d$ are odd.  To get a nonzero summand in formula (\ref{fakh 2.5}) we must have $\mu_1$ and $\mu_2$ even and $\mu_3$ and $\mu_4$ odd.  However, if $\mu_1$ or $\mu_2$ is zero, then by propagation, we know $\mathbb{V}(\sL_2, \ell, \vec{\mu})$ is a pullback from $\ov{M}_{0,3} = pt$, and hence $\deg \mathbb{V}(\sL_2, 1, \vec{\mu}) = 0$.  Thus, we only get a nonzero summand in formula (\ref{fakh 2.5}) when $\vec{\mu} = (2,2,1,1)$.  We compute $\deg \mathbb{V}(\sL_2, 2, (2,2,1,1)) = 1$, and use Lemma \ref{ranks for int numbers of D1, D2} to show that 
\begin{displaymath}
r_{2}(a,2) r_{2}(b,2) r_{2}(c,2) r_{2}(d,2)  =  2^{\frac{a}{2}-1} 2^{\frac{b}{2}-1} 2^{\frac{c-1}{2}} 2^{\frac{d-1}{2}}  = 2^{\frac{a+b+c+d}{2}-3} = 2^{g-2}.
\end{displaymath}

Finally suppose that $a,b,c,d$ are all even.  As above, we may argue that we only get a nonzero summand in formula (\ref{fakh 2.5}) when $\vec{\mu} = (2,2,2,2)$, and then $\deg \mathbb{V}(\sL_2, 2, (2,2,2,2)) = 2$.  We then use Lemma \ref{ranks for int numbers of D1, D2}.ii to show that 
\begin{displaymath}
2 r_{2}(a,2) r_{2}(b,2) r_{2}(c,2) r_{2}(d,2)  =  2 \cdot 2^{\frac{a}{2}-1} 2^{\frac{b}{2}-1} 2^{\frac{c}{2}-1} 2^{\frac{d}{2}-1}  = 2^{g-2}.
\end{displaymath}
\end{proof}

\begin{lemma} \label{ranks for g-1}
\begin{enumerate}
\item $r_{\ell}(k,k)=1$, for all $1 \le k \le \ell$. 
\item $r_{\ell}(k,k-2)=k-1$, for all $2 \le k \le \ell+1$. 
\item $r_{\ell}(\ell,\ell+2)=\ell$.
\end{enumerate}
\end{lemma}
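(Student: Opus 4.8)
The plan is to collapse all three formulas onto a single three–term recursion coming from factorization, and then run short inductions. Applying the factorization rule for $\sL_2$ to the partition $(\underbrace{1,\ldots,1}_{j},t)=(\underbrace{1,\ldots,1}_{j-1})\cup(1,t)$ (legal once $j\ge 3$, since both blocks then have length at least $2$) gives
\[
  r_{\ell}(j,t)=\sum_{\alpha=0}^{\ell} r_{\ell}(j-1,\alpha)\,r_{(1,t,\alpha)}.
\]
By the three–point fusion rules (Proposition \ref{sl2 fusion rules}), $r_{(1,t,\alpha)}=1$ precisely when $t+\alpha$ is odd and $|t-\alpha|\le 1$ (the $\le 2\ell$ condition being automatic for $t,\alpha\le\ell$), i.e.\ when $\alpha\in\{t-1,t+1\}$, and is $0$ otherwise; the values $\alpha=-1$ (when $t=0$) and $\alpha=\ell+1$ (when $t=\ell$) fall outside the summation range and contribute nothing. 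Hence
\[
  r_{\ell}(j,t)=r_{\ell}(j-1,t-1)+r_{\ell}(j-1,t+1),
\]
with the convention that any term whose weight leaves $\{0,\ldots,\ell\}$ is zero. This recursion (whose terms all have the same parity, consistent with the Odd Sum Lemma \ref{oddsumlemma}) is the only structural input; the small cases $j\le 2$ are read off from the one– and two–point fusion rules.

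Parts (1) and (2) are then two inductions on $k$. For (1), the base $r_{\ell}(1,1)=1$ is the two–point rule, and the recursion gives $r_{\ell}(k,k)=r_{\ell}(k-1,k-1)+r_{\ell}(k-1,k+1)$; the overshoot term $r_{\ell}(k-1,k+1)$ vanishes by the Generalized Triangle Inequality (Lemma \ref{GeneralizedTriangleInequality}), since in $(1,\ldots,1,k+1)$ the final weight exceeds the sum $k-1$ of the others, so $r_{\ell}(k,k)=r_{\ell}(k-1,k-1)=1$, with $k\le\ell$ keeping the weight $k$ admissible throughout. For (2), the base is $r_{\ell}(2,0)=r_{(1,1,0)}=1$, and the recursion gives $r_{\ell}(k,k-2)=r_{\ell}(k-1,k-3)+r_{\ell}(k-1,k-1)$; by the inductive hypothesis the first summand is $(k-1)-1=k-2$ (valid because $2\le k-1\le\ell+1$) and by part (1) the second is $1$ (valid because $k-1\le\ell$), giving $r_{\ell}(k,k-2)=k-1$.

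Part (3) is then immediate, once one reads it as the rank with $\ell+2$ ones and final weight $\ell$, namely $r_{\ell}(\ell+2,\ell)$ (the statement as printed has the two entries reversed, since $\ell+2$ is not an admissible level-$\ell$ weight). The recursion gives $r_{\ell}(\ell+2,\ell)=r_{\ell}(\ell+1,\ell-1)+r_{\ell}(\ell+1,\ell+1)$, where the second term is $0$ because $\ell+1$ is not a level-$\ell$ weight, and the first equals $\ell$ by part (2) at $k=\ell+1$. I expect the main (and only mild) obstacle to be the boundary bookkeeping: systematically recording when a weight leaves $\{0,\ldots,\ell\}$ so the corresponding summand drops out, and checking that each induction is invoked strictly inside the range where the earlier parts have already been proved. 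There is no deep difficulty beyond this careful tracking of admissible ranges and the index convention in part (3).
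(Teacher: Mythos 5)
Your proof is correct and follows essentially the same route as the paper: factorization along the partition $1^{j-1}\cup(1,t)$ plus the three-point fusion rules to get the two-term recursion, induction for (1) with the Generalized Triangle Inequality killing the overshoot term, induction for (2) feeding off (1), and a single application of the recursion for (3); your reading of (3) as $r_{\ell}(\ell+2,\ell)$ matches what the paper actually computes. The only organizational difference is that you derive the recursion once up front (it is the paper's Proposition \ref{rankrecurrences}) rather than re-deriving the needed instances inside each part.
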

\begin{proof}
For the first statement, we use induction on $k$ with base case $k=1$.
  Indeed, $r_{\ell}(1,1)=1$ by the $2$-point fusion rule.   Assume $r_{\ell}(j,j)=1$ for $j<k$ and apply factorization
  with the partition $1^{k-1}|(1,k)$ to get:
  $$r_{\ell}(k,k)=\sum_{0 \le \mu \le \ell} r_{\ell}(k-1,\mu)r_{(1,k,\mu)}.$$
  By the three point fusion rules, we have $r_{(1,k,\mu)}=0$ if $\mu <
  k-1$, or if $\mu>k+1$, or if $\mu=k$.  We also have $r_{(1,k,\mu)}=1$ if $\mu=k-1$ or $\mu=k+1$.  However, if
  $\mu=k+1$, then $r_\ell(k-1,k+1)=0$ by the Generalized Triangle
  Inequality (Lemma \ref{GeneralizedTriangleInequality}).  So the only  nonzero summand in $r_{\ell}(k,k)$ is $r_{\ell}(k-1,k-1)$, which is
  1 by the induction hypothesis, and so we are done.

For the second statement, we use induction on $k$ with base case $k=2$.
When $k=2$, the statement is that $r_{\ell}(1,1,0)=1$, which is true
by propagation and the two point fusion rules.  So assume
$r_{\ell}(j,j-2)=j-1$ for  $2 \leq j \leq k-1$.   Apply factorization to $1^{k-1} \cup (1,k-2)$.
    $$r_{\ell}(k,k-2)=\sum_{0 \le \mu \le \ell} r_{\ell}(k-1,\mu)r_{(1,k-2,\mu)}.$$
As before, by the three point fusion rules, we have $r_{(1,k-2,\mu)}$
if $\mu < k-3$, or if $\mu > k-1$, or if $\mu=k-2$.  We also have
$r_{(1,k-2,\mu)}=1$ if $\mu=k-3$ or if $\mu=k-1$.  Thus there are only
two nonzero summands in $r_{\ell}(k,k-2)$:
\begin{displaymath}
r_{\ell}(k,k-2) = r_{\ell}(k-1,k-3) + r_{\ell}(k-1,k-1).
\end{displaymath}
By the induction hypothesis, we have $r_{\ell}(k-1,k-3)=k-2$, and
by the first statement of this lemma, we have $r_{\ell}(k-1,k-1)=1$.
Thus $r_{\ell}(k,k-2) =k-1$, as claimed.

For the third statement, we apply factorization using the partition
$1^{\ell+1} \cup (1,\ell)$:
  $$r_{\ell}(\ell+2,\ell)=\sum_{0 \le \mu \le \ell} r_{\ell}(\ell+1,\mu)r_{(1,\ell,\mu)}.$$
We can argue as we did above that there is only one nonzero summand,
and it occurs for $\mu=\ell-1$.  Thus $r_{\ell}(\ell+2,\ell) =
r_{\ell}(\ell+1,\ell-1)$, and by the second statement, this is $\ell$.  

\end{proof}

\begin{corollary}\label{g-1}
\begin{equation}
\mathbb{D}^{\sL_2}_{g-1,(1,\ldots,1)} \cdot F_{n-i-2, i,1,1}  = \left\{\begin{matrix}
  0 & i \ne g-1\\
g-1&  i =g-1.
\end{matrix} \right.
\end{equation}
\end{corollary}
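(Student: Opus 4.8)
The plan is to read the intersection number directly off Theorem~\ref{intersect}, then to dispose of all but one value of $i$ using the vanishing Corollary~\ref{zeroint}, and finally to evaluate the single surviving case with Lemma~\ref{ranks for g-1}. Throughout I set $n=2(g+1)$ and $\ell=g-1$, and I note that $F_{n-i-2,i,1,1}=F_{1,1,i}$, so that the curves in question are precisely those of the family $\mathscr{C}_1$, with $1\le i\le g$. Substituting $\ell=g-1$ into Theorem~\ref{intersect} gives
\[
\mathbb{D}^{\sL_2}_{g-1,(1,\ldots,1)}\cdot F_{n-i-2,i,1,1}=r_{g-1}(i,g-1)\cdot r_{g-1}(n-i-2,g-1),
\]
where $n-i-2=2g-i$. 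Since the right-hand side is a product of two ranks, it vanishes whenever either factor does.

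Next I would invoke Corollary~\ref{zeroint} to eliminate every value of $i$ in the range $1\le i\le g$ except $i=g-1$. Part~(1) forces the intersection to vanish when $i<\ell=g-1$, and part~(2) forces it to vanish when $i\not\equiv g-1\bmod 2$; the latter in particular kills $i=g$. This establishes the branch $i\neq g-1$ of the claim.

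For the remaining case $i=g-1$ I would evaluate the two surviving factors using Lemma~\ref{ranks for g-1}. The first factor is $r_{g-1}(g-1,g-1)=1$ by part~(1) (taking $k=\ell=g-1$), while the second factor is $r_{g-1}(g+1,g-1)=r_{\ell}(\ell+2,\ell)=\ell=g-1$ by part~(3). Their product is $1\cdot(g-1)=g-1$, which is exactly the asserted value.

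There is no serious obstacle here: the proof is simply an assembly of three results already in hand. The only points demanding care are bookkeeping. First, one must recognize that the relevant index range is $1\le i\le g$, so that the value $i=g+1$, which yields the same curve as $i=g-1$ under the symmetry $i\leftrightarrow 2g-i$, requires no separate treatment. Second, one must match $r_{g-1}(g+1,g-1)$ to the third rank formula of Lemma~\ref{ranks for g-1} rather than the second, whose stated range $k\le\ell+1$ excludes $k=g+1$.
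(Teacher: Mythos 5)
Your proof is correct and follows essentially the same route as the paper's: Theorem \ref{intersect} to express the intersection as a product of ranks, vanishing for $i\neq g-1$, and Lemma \ref{ranks for g-1} parts (1) and (3) to evaluate the case $i=g-1$. The only cosmetic difference is that for $i=g$ you cite the parity statement of Corollary \ref{zeroint}, whereas the paper applies the Odd Sum Lemma directly to $r_{g-1}(g,g-1)$; these are the same argument, since that part of Corollary \ref{zeroint} is itself proved by the Odd Sum Lemma.
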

\begin{proof}
If $i \le g-2$, then by Corollary \ref{zeroint}, we have $\mathbb{D}^{\sL_2}_{g-1,(1,\ldots,1)} \cdot F_{n-i-2, i,1,1}=0$.
And for $i=g$, by Theorem \ref{intersect}
$$\mathbb{D}^{\sL_2}_{g-1,(1,\ldots,1)} \cdot F_{g,g,1,1}=r_{g-1}(g,g-1) \cdot r_{g-1}(g,g-1).$$
As $2g-1$ is odd, by the Odd Sum Lemma \ref{oddsumlemma}, $r_{g-1}(g,g-1)=0$
This leaves $i=g-1$. Again by Theorem \ref{intersect}
$$\mathbb{D}^{\sL_2}_{g-1,(1,\ldots,1)} \cdot F_{g+1,g-1,1,1}=r_{g-1}(g-1,g-1) \cdot r_{g-1}(g+1,g-1).$$

By the first statement of Lemma \ref{ranks for g-1} we have
$r_{g-1}(g-1,g-1) =1$, and by the third statement we have
$r_{g-1}(g+1,g-1)=g-1$.  The result follows.  
 \end{proof}

\begin{corollary}\label{g}
\begin{equation}
\mathbb{D}^{\sL_2}_{g,(1,\ldots,1)} \cdot F_{n-i-2, i,1,1}  = \left\{\begin{matrix}
  0 & i \le g-1\\
  1 &  i =g.
\end{matrix} \right.
\end{equation}
\end{corollary}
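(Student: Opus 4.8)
The plan is to follow the proof of Corollary \ref{g-1} almost verbatim, noting that the level $\ell = g$ case is in fact the simplest: here the only surviving F-curve sits at the symmetric value $i = g$, and its intersection number collapses to a single rank that Lemma \ref{ranks for g-1} already records. The strategy is thus to split on $i$, kill the small-$i$ range with the vanishing corollary, and evaluate the one remaining case directly via Theorem \ref{intersect}.

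First I would dispose of the range $i \le g-1$. Since $\ell = g$, the hypothesis $i \le g-1$ is exactly $i < \ell$, so part (1) of Corollary \ref{zeroint} (which is the Generalized Triangle Inequality, Lemma \ref{GeneralizedTriangleInequality}, applied to the factor $r_\ell(i,\ell)$) forces
\begin{displaymath}
\mathbb{D}^{\sL_2}_{g,(1,\ldots,1)} \cdot F_{n-i-2,i,1,1} = 0 .
\end{displaymath}
This settles the top line of the displayed formula with no computation.

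For the remaining case $i = g$, I would use that $n = 2(g+1) = 2g+2$, so $n - i - 2 = g$ and the curve in question is $F_{g,g,1,1}$. Theorem \ref{intersect} then gives
\begin{displaymath}
\mathbb{D}^{\sL_2}_{g,(1,\ldots,1)} \cdot F_{g,g,1,1} = r_{g}(g,g) \cdot r_{g}(g,g).
\end{displaymath}
The first statement of Lemma \ref{ranks for g-1} says $r_\ell(k,k) = 1$ for $1 \le k \le \ell$; taking $\ell = k = g$ (which lies in range) yields $r_g(g,g) = 1$, so the product equals $1$, matching the bottom line of the formula.

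There is essentially no serious obstacle here: the entire content is the observation that at level $\ell = g$ the product in Theorem \ref{intersect} degenerates to the square of $r_\ell(\ell,\ell)$, which the rank lemma pins down. The only point worth recording explicitly is that $1 \le i \le g$ really is the full range of interest, so that the two cases above are exhaustive: the symmetry $F_{n-i-2,i,1,1} = F_{i,n-i-2,1,1}$ identifies the parameter $i$ with $2g - i$, making the values $i > g$ redundant, with $i = g$ the fixed point of this symmetry. I would also note for consistency that the weight sum governing $r_g(g,g)$ is $g + g = 2g$, which is even, so the Odd Sum Lemma (Lemma \ref{oddsumlemma}) does not interfere with this case — in contrast to the $i=g$ computation in Corollary \ref{g-1}.
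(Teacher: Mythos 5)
Your proposal is correct and follows the paper's own argument essentially verbatim: kill $i \le g-1$ via Corollary \ref{zeroint}, then evaluate $i=g$ with Theorem \ref{intersect} and the identity $r_g(g,g)=1$ from the first part of Lemma \ref{ranks for g-1}. The additional remarks on the symmetry of the range of $i$ and the parity check are harmless and consistent with the paper.
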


\begin{proof}
If $i \le g-1$, then by Corollary \ref{zeroint}, we have $\mathbb{D}^{\sL_2}_{g,(1,\ldots,1)} \cdot F_{n-i-2, i,1,1}=0$.
And for $i=g$, by Theorem \ref{intersect}
$$\mathbb{D}^{\sL_2}_{g,(1,\ldots,1)} \cdot F_{g,g,1,1}=r_{g}(g,g) \cdot r_{g}(g,g).$$

By the first statement of Lemma \ref{ranks for g-1}, we have
$r_{g}(g,g)=1$.
\end{proof}

\begin{corollary} \label{basis}
 $\{\Dl : 1 \le \ell \le g\}$
is a basis for $\Pic(\ov{M}_{0,2g+2}/S_{2g+2})$. 
\end{corollary}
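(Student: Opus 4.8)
The plan is to reduce the statement to the nonsingularity of a single intersection matrix. Since $\operatorname{NS}(\ov{M}_{0,2g+2}/S_{2g+2})_{\Q}$ is $g$-dimensional and $\mathscr{C}_1=\{F_{1,1,i}:1\le i\le g\}$ is a basis of the dual space $N_1(\ov{M}_{0,2g+2}/S_{2g+2},\Q)$ by Corollary \ref{abasis}, the family $\{\Dl:1\le\ell\le g\}$ consists of exactly $g$ elements of a $g$-dimensional space. Hence it suffices to prove linear independence, and because $\mathscr{C}_1$ pairs perfectly with the Néron--Severi space, this is equivalent to showing that the $g\times g$ matrix $M=(\Dl\cdot F_{1,1,i})_{1\le\ell,i\le g}$ is invertible.

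First I would compute $M$ using the intersection formula. With $n=2g+2$, the class $F_{1,1,i}$ agrees (up to the $S_n$-reordering under which F-curve classes are invariant) with $F_{n-i-2,i,1,1}=F_{2g-i,i,1,1}$, so Theorem \ref{intersect} gives $M_{\ell,i}=r_\ell(i,\ell)\,r_\ell(2g-i,\ell)$. By Corollary \ref{zeroint}(1) we have $M_{\ell,i}=0$ whenever $i<\ell$, so $M$ is upper triangular for the natural orderings of $\ell$ and $i$, and therefore $\det M=\prod_{\ell=1}^{g} r_\ell(\ell,\ell)\,r_\ell(2g-\ell,\ell)$. By Lemma \ref{ranks for g-1}(1) we have $r_\ell(\ell,\ell)=1$, so the determinant reduces to $\prod_{\ell=1}^{g} r_\ell(2g-\ell,\ell)$, and the whole argument comes down to establishing that each of these diagonal ranks is nonzero.

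The main obstacle is exactly this nonvanishing, which I want to prove using only factorization and the fusion rules, avoiding the unused criterion of Lemma \ref{r not zero}. Observe that $2g-\ell\ge\ell$ (since $\ell\le g$) and $2g-\ell\equiv\ell\pmod 2$. I claim that $r_\ell(m,\ell)\ge r_\ell(m-2,\ell)$ for every $m\ge 3$ with $m\equiv\ell\pmod 2$: applying factorization to the partition $(1,1)\cup(1^{m-2},\ell)$ yields $r_\ell(m,\ell)=\sum_{\alpha=0}^{\ell}r_{(1,1,\alpha)}\,r_{(1^{m-2},\ell,\alpha)}$, and since $r_{(1,1,\alpha)}=1$ exactly for $\alpha\in\{0,2\}$ and vanishes otherwise by Proposition \ref{sl2 fusion rules}, the $\alpha=0$ term alone contributes $r_{(1^{m-2},\ell,0)}=r_\ell(m-2,\ell)$ by propagation, with the $\alpha=2$ term (present when $\ell\ge 2$) nonnegative. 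Iterating this inequality downward in steps of $2$ from $m=2g-\ell$ to the base value $m=\ell$ gives $r_\ell(2g-\ell,\ell)\ge r_\ell(\ell,\ell)=1$. Consequently $\det M=\prod_{\ell=1}^{g}r_\ell(2g-\ell,\ell)>0$, so $M$ is nonsingular, the divisors $\Dl$ are independent, and being $g$ independent classes in a $g$-dimensional space they form a basis of $\Pic(\ov{M}_{0,2g+2}/S_{2g+2})$.
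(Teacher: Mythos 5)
Your proof is correct and follows essentially the same route as the paper: intersect the $\Dl$ with the curve basis $\{F_{1,1,i}\}$, use the Generalized Triangle Inequality to get a triangular matrix, and use $r_\ell(\ell,\ell)=1$ together with a factorization argument to see the diagonal entries $r_\ell(2g-\ell,\ell)$ are nonzero. The only (minor) difference is in that last step, where the paper factors once along the partition $1^{\ell}\ell \mid 1^{n-2\ell-2}$ while you iterate the partition $(1,1)\cup(1^{m-2},\ell)$ to get the monotonicity $r_\ell(m,\ell)\ge r_\ell(m-2,\ell)$ --- a slightly more detailed but equivalent justification.
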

\begin{proof}  The matrix of intersection numbers between these divisors and the F-curves $F_{n-i-2, i,1,1}$ is lower triangular with nonzero entries on the diagonal, and so the divisors are linearly independent.
To see this, note that by the Generalized Triangle Inequality, Lemma\label{Generalized Triangle Inequality}, one has $r_{\ell}(i,\ell)=0$ if $i<l$.  Thus, the entries above the diagonal are all zero.  On the other hand, $r_{\ell}(\ell,\ell)=1$.  One can apply factorization to show that $r_{\ell}(n-i-2,\ell) \neq 0$ as well, by partitioning the weight vector $1^{n-i-2} \ell$ as $1^{i} \ell | 1^{n-2i-2}$.  As the rank of $\Pic(\ov{M}_{0,n}/S_n)$ is $g$, the result follows.
\end{proof}

\section{Classes of the $\Dl$}\label{classes}
 In this Section we prove Proposition \ref{reducedformula}, which
gives a general formula for the classes of the divisors 
$\mathbb{D}^{\sL_2}_{\ell,(1,\ldots,1)}$.  This expression depends on 
ranks of certain $\sL_2$ CB-bundles.  We give these ranks in
Proposition \ref{rlclosedform}, which is proved in Section \ref{generatingsubsection}.  In Section \ref{extremalformulae} we give simplified expressions  for the four elements of the family that
generate extremal rays and we give simplified versions of the classes of some of the divisors
that lie on higher dimensional extremal faces.

Recall that 
for $0 \le t \le \ell$
$$r_{\ell}(i,t)
=\operatorname{rank}(
\mathbb{V}(\sL_2, \ell, ( {\underset{\text{i times}}{\underbrace{1,\ldots,1}}},t ))).$$

We will put $r_{\ell}(i,t)= 0$, for $t<0$, and for $t> \ell$, and write
$$r_{\ell}(n)
=\operatorname{rank}(
\mathbb{V}(\sL_2, \ell, (1,\ldots,1 ))).$$

\begin{proposition}\label{reducedformula} 
$ \Dl = \frac{1}{2(\ell + 2)} \sum_{i= 2}^{g+1} \left[ 
\frac{i(n-i)}{n-1} \beta_1 - \beta_i\right] B_{i}$,
where 
$$\beta_1=\frac{3}{2}r_{\ell}(n), \ \text{and   }   \ \beta_i=\sum_{t=0}^{\ell} (\frac{t^2}{2}+t) r_{\ell}(i,t) r_{\ell}(n-i, t) \ \  \text{ for  } 2 \le i \le g+1.$$
\end{proposition}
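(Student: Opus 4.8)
The plan is to obtain the class of $\Dl$ by intersecting against the curve basis $\mathscr{C}_1 = \{F_{1,1,i} : 1 \le i \le g\}$ from Corollary \ref{abasis} and solving the resulting linear system, using that $\{B_i\}$ is the dual-type basis on the divisor side. Write the unknown class as $\Dl = \sum_{i=2}^{g+1} c_i B_i$ (the sum starts at $i=2$ because $B_1 = 0$ in the sense that the relevant boundary classes begin at $|J|=2$, and stops at $g+1$ by the $S_n$ symmetry $B_j = B_{n-j}$ together with $n=2g+2$). The coefficients $c_i$ are then pinned down by the two kinds of intersection data we already control: intersections with $F$-curves via Theorem \ref{intersect}, and the known intersection numbers $B_i \cdot F_{1,1,j}$.

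First I would assemble the general degree formula. Applying Fakhruddin's intersection formula \cite{Fakh}*{Prop. 2.5} (equation (\ref{fakh 2.5})) to the $F$-curve $F_{n-i-2,i,1,1}$ and invoking Theorem \ref{intersect}, one has $\Dl \cdot F_{n-i-2,i,1,1} = r_\ell(i,\ell)\, r_\ell(n-i-2,\ell)$; more relevantly, I would directly express $\Dl \cdot F_{1,1,a,b}$ for a general $F$-curve in terms of the degrees of the four-pointed restrictions, where Lemma \ref{mu lemma} reduces every contribution to the single case $\vec\mu = (\ell,\ell,1,1)$ of degree one. The upshot is that the geometric content of $\Dl$ is entirely carried by the three-point ranks $r_\ell(i,t)$, which is exactly what the quantities $\beta_i$ package: the weight $\frac{t^2}{2}+t = c(t)$ is the $\sL_2$ Casimir scalar appearing in Fakhruddin's degree formula (\ref{fakh deg formula}), and the product $r_\ell(i,t) r_\ell(n-i,t)$ is what factorization produces when one cuts the $n$ marked points into a group of $i$ and a group of $n-i$ with an internal weight $t$ summed over $0 \le t \le \ell$.

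Next I would recall the standard intersection numbers of the symmetric boundary generators $B_i$ against the curves $F_{1,1,j}$ and against $\psi = \Psi$, which are elementary and recorded in the literature (and are implicit in the computations proving Theorem \ref{3curvefamilies}). Writing $\Dl = \sum_i c_i B_i$ and matching $\Dl \cdot F_{1,1,j}$ against the explicit right-hand side gives a linear system for the $c_i$; the term $\frac{i(n-i)}{n-1}\beta_1$ is precisely the shape one gets from the $\psi$-class contribution, since $\psi \cdot F_{1,1,j}$ and the coefficient $\frac{i(n-i)}{n-1}$ are exactly the data appearing in the canonical-class expansion $K_{\ov M_{0,n}} = \sum (\frac{i(n-i)}{n-1}-2)B_i$ recorded in Section \ref{tautological classes}. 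Concretely, I expect the class to split as a $\Psi$-like piece (producing the $\frac{i(n-i)}{n-1}\beta_1 B_i$ term) minus a genuine boundary piece $\beta_i B_i$ coming from the factorization sum, all scaled by the prefactor $\frac{1}{2(\ell+2)} = \frac{1}{2(\ell+h^\vee)}$ from (\ref{fakh deg formula}).

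The main obstacle will be bookkeeping the normalization and the identification of $\beta_1$. Verifying that the $\psi$-contribution collapses to $\beta_1 = \frac{3}{2} r_\ell(n)$ requires evaluating $\sum_{i=1}^4 c(\mu_i)$ against $r_{\vec\mu}$ in the degree formula at the generic weight configuration and summing over the admissible $\vec\mu$; the factor $\frac{3}{2}$ should emerge from the three marked points carrying weight one that survive after propagation, each contributing $c(1) = \frac{3}{2}$, times the overall rank $r_\ell(n)$. The genuinely delicate point is making sure the two-index reduction (the fact that only $\mu_3=\mu_4=1$ and then only $\vec\mu=(\ell,\ell,1,1)$ survive in Theorem \ref{intersect}) is compatible with the full-weight expansion that produces the $\beta_i$, so I would cross-check the final formula by re-deriving the four extremal cases $\ell \in \{1,2,g-1,g\}$ and confirming agreement with Corollaries \ref{1}, \ref{2}, \ref{g-1}, and \ref{g}.
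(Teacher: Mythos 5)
There is a gap here. The paper's proof is a one-line specialization: Fakhruddin's Corollary~3.5 already gives the \emph{class} of a conformal blocks divisor on $\ov{M}_{0,n}$ as $\frac{1}{2(\ell+h^\vee)}\bigl[\operatorname{rk}(\mathbb{V})\sum_i c(\lambda_i)\psi_i - \sum_I b_I\delta_I\bigr]$ with $b_I=\sum_t c(t)\,r_{I\cup t}\,r_{I^c\cup t}$, and one simply substitutes $h^\vee=2$, $c(t)=\tfrac{t^2}{2}+t$, symmetrizes, and uses $\Psi=\sum_i\frac{i(n-i)}{n-1}B_i$. Your route instead writes $\Dl=\sum_i c_iB_i$ and proposes to pin down the $c_i$ by intersecting with the curve basis $\mathscr{C}_1$. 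That strategy can only \emph{verify} a candidate class, and the verification is precisely the hard step you skip: solving the linear system expresses each $c_i$ as a rational function of the numbers $r_\ell(j,\ell)\,r_\ell(n-j-2,\ell)$ from Theorem~\ref{intersect}, and showing this solution equals $\frac{1}{2(\ell+2)}\bigl[\frac{i(n-i)}{n-1}\beta_1-\beta_i\bigr]$ amounts to the identity
\begin{equation*}
\frac{1}{2(\ell+2)}\Bigl[\,2\beta_1+\beta_j+\beta_{j+2}-2\beta_{j+1}-\beta_2\,\Bigr]\;=\;r_\ell(j,\ell)\,r_\ell(n-j-2,\ell)
\end{equation*}
for all $j$. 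The left side involves the full factorization sums over all internal weights $0\le t\le\ell$, the right side only the weight-$\ell$ terms; bridging them is a genuine family of rank identities (provable from Proposition~\ref{rankrecurrences}, but not bookkeeping), and nothing in your proposal establishes it. The paper does carry out this style of check, but only for the four special values $\ell\in\{1,2,g-1,g\}$ in Theorem~\ref{extremaldivisors}, where the classes collapse to closed forms.

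A second, concrete error: your explanation of $\beta_1$ is wrong. The factor $\tfrac{3}{2}$ is $c(1)=\tfrac{1}{2}+1$ for a \emph{single} weight-one point; the term $\frac{i(n-i)}{n-1}\beta_1 B_i$ arises because $\operatorname{rk}(\mathbb{V})\sum_{i=1}^n c(1)\psi_i=c(1)\,r_\ell(n)\,\Psi$ and $\Psi=\sum_i\frac{i(n-i)}{n-1}B_i$. There is no propagation here (no weight is zero), and \qq{three marked points each contributing $c(1)=\tfrac32$} would give $\tfrac92$, not $\tfrac32$. If you want a self-contained argument in the spirit of your second paragraph, the clean fix is to quote Fakhruddin's class formula (Cor.~3.5) rather than his $F$-curve degree formula (Prop.~2.5), identify $\beta_1\Psi$ with the $\psi$-term and $\beta_i$ with the symmetrized boundary coefficients, and drop the linear-system detour entirely.
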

\begin{proof}
We obtain this formula from \cite{Fakh}*{Cor. 3.5}, using that
on $\sL_2$, one has $h^{\vee}=2$ and $t^*=t$ and $c(t)=\frac{t^2}{2}+t$.
\end{proof}

We give an explicit formula for the ranks $r_{\ell}(j,t)$ in the next proposition.  

\begin{proposition} \label{rlclosedform}  Let $j \geq 0$, and let $0 \leq t \leq \ell$.  Write $K:= \lceil j/(2(\ell+2)  \rceil$.  Then 
\begin{enumerate}
\item If $j,t$ are both even, say $j=2x$ and $t=2y$, then 
\begin{equation}\label{A}
r_{\ell}(j,t)  = \sum_{k=0}^{K}  \left(  b_k  \ \binom{2x}{x-y-k(\ell+2)}   - c_k \  \binom{2x}{x-(k+1)(\ell+2)+y+1}  \right),
\end{equation}
where $b_k = \frac{2y+2k(\ell+2)+1}{x+y+k(\ell+2)+1}$ and $c_k=\frac{(2k+2)(\ell+2)-2y-1}{x+(k+1)(\ell+2)-y}$; and 
\item If $j,t$ are both odd, say $j=2x+1$ and $t=2y+1$, then 
\begin{equation}\label{B}
r_{\ell}(j,t) =  \sum_{k=0}^{K}   \left( b_k \   \binom{2x+1}{x-y-k(\ell+2)}   -  c_k  \binom{2x+1}{ x-(k+1)(\ell+2) +y+2}       \right),
\end{equation}
where $b_k=\frac{2y+2k(\ell+2)+2}{x+y+k(\ell+2)+2}$ and $c_k=\frac{2(k+1)(\ell+2)-2y-2}{x+(k+1)(\ell+2) - y}$.
\item If $j$ and $t$ have opposite parity, then $r_{\ell}(j,t)=0$. 
\end{enumerate}
\end{proposition}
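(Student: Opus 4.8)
The plan is to treat the three cases separately, with case (3) immediate and cases (1) and (2) reduced to a single combinatorial identity. For case (3), if $j$ and $t$ have opposite parity then the total weight $j\cdot 1 + t = j+t$ is odd, so $r_\ell(j,t)=0$ by the Odd Sum Rule (Lemma~\ref{oddsumlemma}).

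For cases (1) and (2), I would first establish a two-term recursion. Factoring the weight vector $(1^{j},t)$ along the partition $(1^{j-1})\cup(1,t)$ (valid once $j\ge 3$) gives
\begin{equation*}
r_\ell(j,t)=\sum_{\alpha=0}^{\ell} r_\ell(j-1,\alpha)\, r_{(1,t,\alpha)}.
\end{equation*}
By the three-point fusion rules (Proposition~\ref{sl2 fusion rules}) the coefficient $r_{(1,t,\alpha)}$ equals $1$ only for $\alpha=t-1$ and $\alpha=t+1$ (parity excludes $\alpha=t$, the triangle inequality excludes all other values, and the bound $1+t+\alpha\le 2\ell$ together with $\alpha\le\ell$ is absorbed into the stated convention $r_\ell(\,\cdot\,,s)=0$ for $s<0$ or $s>\ell$), so that
\begin{equation*}
r_\ell(j,t)=r_\ell(j-1,t-1)+r_\ell(j-1,t+1).
\end{equation*}
The small cases $r_\ell(0,t)=\delta_{t,0}$, $r_\ell(1,t)=\delta_{t,1}$, $r_\ell(2,t)=\delta_{t,0}+\delta_{t,2}$ come from the one-, two-, and three-point fusion rules and seed the induction. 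Thus $r_\ell(j,t)$ counts nearest-neighbour lattice walks of length $j-1$ from height $1$ to height $t$ that stay in the strip $\{0,1,\ldots,\ell\}$; the boundary relations $r_\ell(j,0)=r_\ell(j-1,1)$ and $r_\ell(j,\ell)=r_\ell(j-1,\ell-1)$ express exactly that the walk may touch but not leave the strip.

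With the walk interpretation in hand, I would compute the count by the method of images for a strip. Reflecting the endpoint across the two forbidden barriers at $-1$ and $\ell+1$ generates images at $t+2k(\ell+2)$ and $-2-t+2k(\ell+2)$, $k\in\Z$, and the reflection principle writes the confined count as a signed sum of unconstrained path numbers $\binom{j-1}{\cdot}$ over these images. The period $2(\ell+2)$ of the image lattice is the source of the shifts $k(\ell+2)$ in the statement, and the truncation to $0\le k\le K=\lceil j/(2(\ell+2))\rceil$ reflects that images beyond this range give binomial coefficients with out-of-range lower index, hence zero. To reach the precise stated form I would then apply the elementary identity
\begin{equation*}
\binom{N}{r}-\binom{N}{r-1}=\frac{N-2r+1}{N-r+1}\binom{N}{r},
\end{equation*}
which collapses each adjacent pair of image binomials into one ratio-weighted binomial: a direct check shows that the pair from the lower-barrier images yields $b_k\binom{2x}{x-y-k(\ell+2)}$ and the pair from the upper-barrier images yields $c_k\binom{2x}{x-(k+1)(\ell+2)+y+1}$, with $b_k,c_k$ exactly as listed, and splitting into even/even and odd/odd parity gives (1) and (2), the only difference being the shift in the binomial arguments forced by the parities of the start and end heights.

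The main obstacle is bookkeeping rather than ideas: setting up the images and signs correctly for a strip with \emph{two} barriers (one barrier is the classical ballot problem, but the interplay of both reflections must be tracked), matching each image to the exact binomial argument and ratio coefficient in \eqref{A} and \eqref{B}, and checking that the cutoff at $K$ captures precisely the nonvanishing terms. An alternative route that avoids the image calculus is to take \eqref{A}/\eqref{B} as an ansatz and prove it by induction on $j$: the base cases are immediate, and the recursion $r_\ell(j,t)=r_\ell(j-1,t-1)+r_\ell(j-1,t+1)$ then follows by applying Pascal's identity termwise, where once again the delicate points are the behaviour at $t=0,\ell$ and the change in the summation range $K$.
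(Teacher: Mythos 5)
Your proposal is correct and follows essentially the same route as the paper: the paper likewise derives the Pascal-type recurrence $r_\ell(j,t)=r_\ell(j-1,t-1)+r_\ell(j-1,t+1)$ from factorization along $1^{j-1}\cup(1,t)$ (Proposition \ref{rankrecurrences}), solves the unconstrained recurrence by ballot numbers $r_\infty(j,t)$ (Proposition \ref{infinitysolution}), and obtains $r_\ell$ from $r_\infty$ by exactly the reflection/alternating-sum scheme you describe, verified by induction on $j$ (Proposition \ref{relatethem}). The only cosmetic difference is that the paper packages the single-barrier reflection into the intermediate object $r_\infty$ before reflecting across the columns $t=(k+1)(\ell+2)-1$, whereas you describe the two-barrier image calculus and the binomial-collapsing identity directly.
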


Proposition \ref{rlclosedform} follows from Propositions \ref{infinitysolution}
\ref{relatethem}.  These are both  proved in the following subsection.

\begin{note}
Compare the Equations \ref{A} and \ref{B} above to the Verlinde formula (\ref{Verlinde
  formula}).  The formulas of Proposition \ref{rlclosedform} appear more
complicated, but can be evaluated using only arithmetic operations (no trigonometric functions are required).  Thus, it is clear from
Proposition \ref{rlclosedform} that the ranks are rational numbers,
which is not obvious in the Verlinde formula.  In fact, with just a little
more work, it is easy to argue that the numbers $r_{\ell}(j,t)$ in
Proposition \ref{rlclosedform} are in fact integers.
\end{note}

\subsection{Proofs of the rank formulas in Proposition \ref{rlclosedform}}\label{generatingsubsection}

The proof of Proposition \ref{rlclosedform} involves three steps.
First, in Proposition \ref{rankrecurrences} we show the $r_{\ell}(j,t)$ are determined by a system of recurrences.
Second, in Definition \ref{infinity} and Proposition \ref{infinitysolution} we define a system of recurrences and solve it to get
an array of numbers $r_{\infty}(j,t)$.
Last, in Proposition \ref{relatethem} we explicitly relate the $\{r_{\ell}(j,t)\}$ and the $\{r_{\infty}(j,t)\}$.

\begin{proposition}\label{rankrecurrences}
The ranks $r_{\ell}(j,t)$ are determined by the system of recurrences
\begin{equation}\label{Pascal}  r_{\ell}(j,t)  =   r_{\ell}(j-1,t-1) + r_{\ell}(j-1,t+1),  \qquad t = 1,\ldots,\ell.
\end{equation}
together with seeds 
$$r_{\ell}(j,j)=1, \ \mbox{ if  } j \le \ell, \ \ \mbox{and } \ \ r_{\ell}(j,j)=0, \ \mbox{ if  } j > \ell.$$
\end{proposition}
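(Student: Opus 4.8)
The plan is to read the recurrence \eqref{Pascal} directly off the $\sL_2$ factorization rule, to supply the seed values from the diagonal ranks already computed, and then to run a one-variable induction showing this data pins down the whole array. The only genuine inputs are the three-point fusion rule and the triangle inequality; everything else is bookkeeping at the two boundaries $t=0$ and $t=\ell$.

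First I would establish the recurrence. Fix $j\ge 3$ and $1\le t\le \ell$, and write $\vec\lambda=(1,\ldots,1,t)$ with $j$ ones. Partition $\vec\lambda$ into $\vec\mu=(1,t)$ and $\vec\nu=(1,\ldots,1)$ ($j-1$ ones); both blocks have length at least $2$, so factorization for $\sL_2$ applies and gives
\[
 r_{\ell}(j,t)=\sum_{\alpha=0}^{\ell} r_{(1,t,\alpha)}\, r_{\ell}(j-1,\alpha).
\]
I would then invoke the three-point fusion rule (Proposition \ref{sl2 fusion rules}): $r_{(1,t,\alpha)}=1$ forces $1+t+\alpha$ even together with the triangle inequalities $|t-\alpha|\le 1$, so $\alpha\in\{t-1,t+1\}$, and it forces the level bound $1+t+\alpha\le 2\ell$. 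For $\alpha=t-1$ this bound reads $t\le \ell$ (always true here), and for $\alpha=t+1$ it reads $t+1\le \ell$; in either admissible case the three-point rank is exactly $1$. Hence only the two terms $\alpha=t\pm1$ survive, each with coefficient $1$, which is precisely \eqref{Pascal} once we use the convention $r_{\ell}(j-1,t+1)=0$ when $t+1>\ell$ (the term that drops out at $t=\ell$). The cases $j=1,2$, where the complementary block is too short for factorization, I would verify by hand against row $0$: $j=1$ is the two-point rule and $j=2$ the three-point rule, and both agree with \eqref{Pascal}.

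Next I would record the seeds and deduce determination. The equality $r_{\ell}(j,j)=1$ for $1\le j\le \ell$ is exactly the first statement of Lemma \ref{ranks for g-1}, and $r_{\ell}(0,0)=1$ is the one-point rule; for $j>\ell$ the weight $t=j$ exceeds the level, so $r_{\ell}(j,j)=0$ by the standing convention that $r_{\ell}(j,t)=0$ for $t>\ell$. To see that \eqref{Pascal} and these seeds determine every $r_{\ell}(j,t)$, I would induct on $j$. The base row is $r_{\ell}(0,t)=\operatorname{rank}\mathbb{V}(\sL_2,\ell,(t))$, which the one-point rule makes $1$ for $t=0$ and $0$ otherwise; equivalently, the Generalized Triangle Inequality (Lemma \ref{GeneralizedTriangleInequality}) forces $r_{\ell}(0,t)=0$ for $t>0$. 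Given row $j-1$, the recurrence \eqref{Pascal} produces $r_{\ell}(j,t)$ for $1\le t\le \ell$, while the remaining entry $r_{\ell}(j,0)$ is handled by propagation: adjoining a weight $0$ does not change the bundle, and factorizing $(1,0)\cup(1^{j-1})$ together with the two-point rule gives $r_{\ell}(j,0)=r_{\ell}(j-1,1)$, that is, \eqref{Pascal} read at $t=0$ under the convention $r_{\ell}(j-1,-1)=0$. This closes the induction.

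The main obstacle is the boundary bookkeeping, not any deep input. The heart of the matter is that the three-point function $r_{(1,t,\alpha)}$ is supported exactly on $\alpha=t\pm1$ with value $1$; the delicate points are then (i) checking that the level bound $1+t+\alpha\le 2\ell$ is precisely what removes the $\alpha=t+1$ term at $t=\ell$, matching the convention $r_{\ell}(\cdot,\ell+1)=0$; (ii) treating $t=0$, which is not covered by the factorization used for $1\le t\le \ell$ and instead requires propagation; and (iii) the short-block cases $j=1,2$. None of these is hard, but all three must be handled for the determination claim to hold, since (as one checks already for $\ell=2$) the $t=1,\ldots,\ell$ recurrence and the diagonal seeds alone leave the even-row entries $r_{\ell}(j,0)$ unconstrained.
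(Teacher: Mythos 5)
Your proof is correct and follows essentially the same route as the paper: factorize $(1^{j},t)$ as $1^{j-1}\cup(1,t)$, use the three-point fusion rule to reduce the sum to $\alpha=t\pm 1$, and kill the $\alpha=t$ term by parity. The only difference is that you spell out the $t=0$ column and the determination-by-induction step explicitly, which the paper leaves to its standing convention $r_{\ell}(j,t)=0$ for $t<0$ and $t>\ell$; this is careful bookkeeping, not a different argument.
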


\begin{note} We observe that (\ref{Pascal}) is somewhat reminiscent of the recurrence for Pascal's triangle.
\end{note}

\begin{proof}
Partition the weight vector $(1,\ldots,1,t)=1^{j}t$ as $1^{j-1} \cup (1,t)$.  If $j +t$ is odd, then by the Odd Sum Rule,  Lemma \ref{oddsumlemma}, $r_{\ell}(j,t) =0$.  So assume $j+t$ is even.  Then the factorization formula states 
\begin{equation} 
\label{factorization} r_{\ell}(j,t) = \sum_{\mu =0}^{\ell} r_{(1^{j-1} \cup \mu)} r_{(1, t,  \mu) }.
\end{equation}
We can simplify this expression.  Recall that by the $\sL_2$ fusion
rules (Prop. \ref{sl2 fusion rules}), $r_{(1, t, \mu)}$ is 0 if $\mu
>t+1$ or if $\mu < t-1$.  Thus the only possibly nonzero summands in (\ref{factorization}) are when $\mu = t-1$, $t$, or $t+1$.  But when $\mu = t$, by the Odd Sum Rule, Lemma \ref{oddsumlemma}, we have $r_{(1,t,t)} =0$.  Thus (\ref{factorization}) simplifies to the following:
\begin{eqnarray}   
r_{\ell}(j,t) & = &  r_{\ell}(j-1,t-1) + r_{\ell}(j-1,t+1) \qquad \qquad t = 1,\ldots,\ell-1;\\
r_{\ell}(j,\ell) & = & r_{\ell}(j-1,\ell-1).
\end{eqnarray}
Since $r_{\ell}(j-1,\ell+1) =0$, we can unify the two lines above, yielding (\ref{Pascal}).

\end{proof}

\begin{definition}\label{infinity}
Let $r_{\infty}(j,t)$ be the solutions of the system of recurrences 
\[ r_{\infty}(j,t)  =   r_{\infty}(j-1,t-1) + r_{\infty}(j-1,t+1)
\]
with seeds  \begin{enumerate}
\item $r_{\infty}(j,t) =0$ if $t>j$;
\item $r_{\infty}(j,-1) =0$ for all $j$; and
\item $r_{\infty}(j,j) = 1$ for all $j$.
\end{enumerate}
\end{definition}

\begin{proposition}\label{infinitysolution}[Values of $r_{\infty}(j,t)$] Suppose $j \geq 0$ and $ 0 \leq t \leq j$.
\begin{enumerate}
\item If $j$ and $t$ are both even, say $j = 2x$ and $t = 2y$, then $r_{\infty}(j,t) = \frac{2y+1}{x+y+1} \binom{2x}{x-y}$.
\item If $j$ and $t$ are both odd, say $j = 2x+1$ and $t = 2y+1$, then $r_{\infty}(j,t) = \frac{2y+2}{x+y+2} \binom{2x+1}{x-y}$.
\item If $j \not\equiv t \pmod{2}$ then $r_{\infty}(j,t) = 0$.
\end{enumerate}
\end{proposition}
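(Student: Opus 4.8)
The plan is to exploit the fact that the recurrence together with the three seed conditions determines the entire array $r_{\infty}(j,t)$ uniquely. Indeed, by induction on $j$ one sees that the values at level $j$ are computed from those at level $j-1$: for $0\le t\le j-1$ the recurrence expresses $r_{\infty}(j,t)$ in terms of $r_{\infty}(j-1,t\pm 1)$ (using seed (2) to kill the out-of-range term when $t=0$), while $r_{\infty}(j,j)=1$ is fixed by seed (3) and $r_{\infty}(j,t)=0$ for $t>j$ by seed (1). Hence it suffices to verify that the three claimed closed forms satisfy both the recurrence and all the seed conditions; uniqueness then forces equality. Seed (3) is immediate (set $y=x$ in either parity case, giving $\binom{\ast}{0}$ with coefficient $1$); seed (2) follows from the odd formula at $y=-1$, whose numerator $2y+2$ vanishes; and seed (1) holds because $\binom{2x}{x-y}=0$ once $y>x$.

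For part (3), I would argue by a short induction on $j$: if $j\not\equiv t\pmod 2$, then both $(j-1,t-1)$ and $(j-1,t+1)$ again have mismatched parity, so they vanish by the inductive hypothesis, and the only nonvanishing seed $r_{\infty}(j,j)$ has $j\equiv j$. Thus the recurrence propagates the parity constraint, and all mixed-parity entries are zero.

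For the matching-parity values I would identify $r_{\infty}(j,t)$ combinatorially as the number of lattice paths of length $j$ from height $0$ to height $t$ using steps $\pm 1$ and never descending below $0$. The last-step decomposition gives exactly the recurrence, the reflecting wall at $-1$ matches seed (2), the unique all-up path matches seed (3), and no path can exceed height $j$, matching seed (1); so these path counts coincide with $r_{\infty}(j,t)$ by the uniqueness observed above. The reflection principle across the line $y=-1$ then yields the ballot-number expression
\begin{equation}\label{ballot}
r_{\infty}(j,t)=\binom{j}{(j-t)/2}-\binom{j}{(j-t)/2-1},
\end{equation}
since the restricted paths are the unrestricted paths from $0$ to $t$ minus those (reflected) running from $-2$ to $t$.

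Finally I would simplify \eqref{ballot} to the stated single-term forms using the consecutive-binomial ratio $\binom{j}{k-1}=\frac{k}{\,j-k+1\,}\binom{j}{k}$ with $k=(j-t)/2$. Substituting $j=2x,\ t=2y$ gives $k=x-y$ and $j-k+1=x+y+1$, so the right-hand side of \eqref{ballot} becomes $\frac{2y+1}{x+y+1}\binom{2x}{x-y}$; substituting $j=2x+1,\ t=2y+1$ gives $k=x-y$ and $j-k+1=x+y+2$, producing $\frac{2y+2}{x+y+2}\binom{2x+1}{x-y}$. I expect the main obstacle to be making the reflection bijection at the wall $y=-1$ fully rigorous (tracking the first time a bad path reaches $-1$); the purely algebraic alternative, namely checking directly that the claimed formulas satisfy the Pascal-type recurrence, avoids this but replaces it with a slightly longer binomial identity. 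Either route is routine once the uniqueness reduction in the first paragraph is in place.
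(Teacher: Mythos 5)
Your proposal is correct. The paper's own proof of this proposition is literally the two words \emph{straightforward check}, i.e., verify that the stated closed forms satisfy the defining recurrence and the three seed conditions and invoke uniqueness; your first paragraph (uniqueness of the solution by induction on $j$, then checking seeds (1)--(3) against the formulas) together with your final algebraic simplification is exactly that check, carried out in full. What you add beyond the paper is a genuine derivation rather than a mere verification: identifying $r_{\infty}(j,t)$ with the number of length-$j$ nonnegative lattice paths from $0$ to $t$, applying the reflection principle across the wall at $-1$ to get the ballot-number difference $\binom{j}{(j-t)/2}-\binom{j}{(j-t)/2-1}$, and collapsing it to the single-term form via the consecutive-binomial ratio. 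Your arithmetic is right in both parity cases ($j-2k+1$ gives $2y+1$ resp.\ $2y+2$ and $j-k+1$ gives $x+y+1$ resp.\ $x+y+2$ with $k=x-y$), and the parity-vanishing induction for part (3) is sound. The path-counting route buys conceptual transparency and explains where the formula comes from; the paper's (implicit) purely algebraic check is shorter on the page but unilluminating. As you note, either branch of your argument suffices once the uniqueness reduction is in place, so the worry about rigorizing the reflection bijection is moot: you may simply fall back on substituting the closed forms into the Pascal-type recurrence.
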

\begin{proof} Straightforward check.
\end{proof}
\begin{remark}  For $t=1$, the formulas above give $\frac{1}{x+1} \binom{2x}{x}$.  These are the well-known Catalan numbers.  
\end{remark}

Next we relate the numbers $\{ r_{\infty}(j,t) \}$ and $\{ r_{\ell}(j,t) \}$.

\begin{proposition} \label{relatethem} 
Let $j \geq 0$, and let $0 \leq t \leq \ell$.  Write $K:= \lceil j/(2(\ell+2)  \rceil$.  Then 
\begin{equation}  
r_{\ell}(j,t)  = \sum_{k=0}^{K}   \left( r_{\infty} \left( \rule{0pt}{12pt} j, t + 2(\ell+2)k \right)   - r_{\infty} \left( \rule{0pt}{12pt} j, (2k+2)(\ell+2)-t-2 \right)  \right).
\end{equation}
\end{proposition}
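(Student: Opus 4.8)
The plan is to show that both sides of the claimed identity satisfy the \emph{same} Pascal-type recurrence together with the same initial and boundary data, and then to conclude by induction on $j$. Write $R(j,t)$ for the right-hand side. Since $r_\infty(j,s)=0$ whenever $s>j$ (Definition \ref{infinity}), only finitely many summands are nonzero; a quick estimate shows that every nonzero term in fact has $0\le k\le K$, so $R(j,t)$ equals the same expression with the summation extended to all $k\ge 0$, and this is the form I will use throughout.

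First I would verify that $R$ obeys the recurrence of Proposition \ref{rankrecurrences}. Because $r_\infty$ satisfies $r_\infty(j,s)=r_\infty(j-1,s-1)+r_\infty(j-1,s+1)$ for every $s\ge 0$ (this holds at $s=0$ as well, using the seed $r_\infty(j-1,-1)=0$), I can apply this identity to each summand of $R(j,t)$: to the ``direct'' terms $r_\infty(j,t+2(\ell+2)k)$ and to the ``reflected'' terms $r_\infty(j,(2k+2)(\ell+2)-t-2)$, noting that the shift $t\mapsto t\pm 1$ moves the reflected index by $\mp 1$. Regrouping the resulting finite sums term by term yields exactly $R(j,t)=R(j-1,t-1)+R(j-1,t+1)$.

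The crux of the argument, and the step I expect to require the most care, is matching the two boundary conditions that cut $r_\ell$ down from $r_\infty$, namely the walls at $t=-1$ and at $t=\ell+1$. Concretely I will check $R(j,-1)=0$ and $R(j,\ell+1)=0$. For $R(j,-1)$ the reflected term at index $k$ coincides with the direct term at index $k+1$ (both equal $r_\infty(j,2(\ell+2)(k+1)-1)$), so the sum telescopes down to $r_\infty(j,-1)=0$. For $R(j,\ell+1)$ the direct and reflected terms at the \emph{same} index $k$ are equal (both equal $r_\infty(j,2k(\ell+2)+\ell+1)$), so every bracket vanishes identically. These two identities are precisely the instances of the reflection principle responsible for the period $2(\ell+2)$ and the reflected position $(2k+2)(\ell+2)-t-2$ appearing in the statement.

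With the recurrence and these two boundary identities in hand, the induction closes quickly. For the base case $j=0$ one has $r_\infty(0,s)=0$ for $s\neq 0$ and $r_\infty(0,0)=1$, so $R(0,0)=1$ and $R(0,t)=0$ for $1\le t\le\ell$, matching $r_\ell(0,\cdot)$. For the inductive step, the recurrence for $R$ together with $R(j-1,-1)=R(j-1,\ell+1)=0$ reduces, for each $0\le t\le\ell$, to $R(j,t)=r_\ell(j-1,t-1)+r_\ell(j-1,t+1)$ by the inductive hypothesis (where $r_\ell$ is taken to vanish outside $\{0,\ldots,\ell\}$, consistent with the boundary identities above); comparing with Proposition \ref{rankrecurrences}, whose recurrence also holds at $t=0$ since by propagation $r_\ell(j,0)=r_\ell(j-1,1)$, gives $R(j,t)=r_\ell(j,t)$, completing the proof.
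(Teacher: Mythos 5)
Your proposal is correct and follows essentially the same route as the paper's proof: induction on $j$, showing the alternating sum of reflections satisfies the Pascal-type recurrence of Proposition \ref{rankrecurrences} by applying the $r_{\infty}$ recurrence termwise and regrouping. You are in fact more explicit than the paper at the key point — the telescoping identity giving $R(j,-1)=0$ and the pairwise cancellation giving $R(j,\ell+1)=0$ are exactly what justifies the paper's terse ``regroup the resulting terms and apply the induction hypothesis,'' and your observation that the $t=0$ case of the recurrence needs the separate propagation identity $r_{\ell}(j,0)=r_{\ell}(j-1,1)$ is a detail the paper elides.
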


In the formulas above, we use the convention that $\binom{n}{k} =0$ if $k < 0$.   

\begin{corollary} Fix $j,t$.  For $\ell$ sufficiently large with respect to $j,t$, we have $r_{\ell}(j,t) = r_{\infty}(j,t)$.
\end{corollary}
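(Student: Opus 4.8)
The plan is to establish the corollary directly from Proposition \ref{relatethem} by showing that for $\ell$ large relative to $j$ and $t$, every term in the sum collapses to the single $k=0$, first summand $r_{\infty}(j,t)$, with all other contributions vanishing. The essential observation is that $r_{\infty}(j,s)=0$ whenever $s>j$, by the first seed in Definition \ref{infinity} (and confirmed by the binomial coefficient convention, since $\binom{2x}{x-y}=0$ once $y>x$). So the strategy is purely to check that the arguments of the form $t+2(\ell+2)k$ and $(2k+2)(\ell+2)-t-2$ all exceed $j$ as soon as $\ell$ is sufficiently large, except for the one genuinely surviving term.

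First I would fix $j$ and $t$ with $0\le t\le \ell$ and recall that $K=\lceil j/(2(\ell+2))\rceil$. For $\ell$ large relative to $j$, namely $\ell \ge j$, we have $2(\ell+2) > j$, so $K=1$ when $j>0$ (and $K=0$ when $j=0$, an even easier case). Thus the sum in Proposition \ref{relatethem} runs over $k=0$ and $k=1$. I would then examine each of the remaining three arguments. For $k=0$, the second summand has argument $2(\ell+2)-t-2 = 2\ell+2-t$; since $t\le \ell$, this is at least $\ell+2 > j$ once $\ell \ge j$, so $r_{\infty}(j,2(\ell+2)-t-2)=0$. For $k=1$, the first summand has argument $t+2(\ell+2)\ge 2(\ell+2)>j$, and the second has argument $4(\ell+2)-t-2$, which is even larger; both exceed $j$, so both vanish.

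Consequently, the only surviving term is $r_{\infty}(j,t)$ from $k=0$, giving $r_{\ell}(j,t)=r_{\infty}(j,t)$ precisely as claimed. To be safe about edge cases, I would phrase the hypothesis ``$\ell$ sufficiently large with respect to $j,t$'' concretely as $\ell \ge j$ (or even just $\ell \ge \max(t, j/2)$ suffices to force the unwanted arguments above $j$), and then the argument above is a clean inequality check requiring no real computation.

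I do not anticipate a genuine obstacle here: the entire content is bookkeeping on which of the two shifted arguments in each summand outrun $j$. The only mild subtlety worth flagging is making the bound explicit enough that $K$ is pinned down and that all non-surviving arguments strictly exceed $j$ simultaneously; choosing $\ell\ge j$ handles this uniformly. Thus this corollary is essentially an immediate specialization of Proposition \ref{relatethem}, recording that the periodic ``reflection'' corrections present for small $\ell$ disappear once the level is large enough that no folding of the infinite triangle occurs within the relevant range.
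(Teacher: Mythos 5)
Your proof is correct and takes essentially the same approach as the paper, whose entire argument is the one-line observation that for large $\ell$ there are no nontrivial reflections in the algorithm of Proposition \ref{relatethem}; you simply make explicit why the correction terms vanish (their arguments exceed $j$, so the corresponding $r_{\infty}$ values are zero). One small caveat: your parenthetical weaker bound $\ell \ge \max(t, j/2)$ does not actually suffice (e.g.\ $j=10$, $t=4$, $\ell=5$ gives the reflected argument $2\ell+2-t=8 \le j$ with $r_{\infty}(10,8)\neq 0$), but your main choice $\ell \ge j$ works and is all the corollary requires.
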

\begin{proof} When $\ell$ is large, there are no nontrivial reflections in the algorithm presented below.
\end{proof}

\begin{proof}[Proof of Proposition \ref{relatethem}]

Suppose that $j_{0} \geq 0$, and $0 \leq t_{0} \leq \ell$.  
To obtain $r_{\ell}(j_0,t_0)$ from the numbers $\{ r_{\infty}(j,t) \}$:
\begin{enumerate}
\item Mark every $(\ell+2^{th})$ column in the array of numbers $\{ r_{\infty}(j,t) \}$.  That is, starting with $k=0$, while $(k+1)(\ell+2)-1 \leq j$, mark the columns $t = (k+1)(\ell+2)-1$.
\item Find the successive reflections of $r_{\infty}(j_0,t_0)$ across these marked columns.
\item $r_{\ell}(j_0,t_0)$ is the alternating sum of these reflections.
\end{enumerate}

To prove this algorithm works, we use induction on $j$.
For $j =0$, we have $r_{\infty}(0,0) = 1$ and $r_{\infty}(0,t)=0$ for $t >0$, so the algorithm above is correct.
Suppose the algorithm works for all rows up to row $j-1$.  We check that it works for row $j$ as well: run the algorithm on $r_{\ell}(j,t)$ to get $r_{\ell}(j,t)$ as an alternating sum of $r_{\infty}(j,t)$.  Apply the recursion
 $r_{\infty}(j,t) = r_{\infty}(j-1,t-1) + r_{\infty}(j-1,t+1)$ to each term in the alternating sum.  We can regroup the resulting terms and apply the induction hypothesis to write this expression as $r_{\ell}(j-1,t-1) + r_{\ell}(j-1,t+1)$.  Thus the algorithm produces numbers that satisfy the correct recurrences.

The algorithm doesn't apply to the zero seed values.  (That is, we need to define these values separately, so we may set them to zero as desired.)  We only have to check the following seeds: for $j \leq \ell$, there are no nontrivial reflections, and we have $r_{\ell}(j,j) = r_{\infty}(j,j)=1$.  
\end{proof}

Proposition \ref{rlclosedform} now follows from Propositions \ref{infinitysolution} and 
\ref{relatethem}.


\begin{example}
By using propagation, factorization, and recursion, we compute $r_{3}(15,3) = 377$.
We compute $r_{3}(15,3)$ using the proposition.  A portion of the
matrix $r_{\infty}(i,j)$ is shown in the table below, and
we see that  $r_{\infty}(15,3) - r_{\infty}(15,5) + r_{\infty}(15,13)
- r_{\infty}(15,15) = 2002- 1638+14-1 = 377.$
\end{example}

\small
\begin{center}
\begin{tabular}{m{0.4cm}|m{0.65cm}m{0.65cm}m{0.65cm}m{0.65cm}|m{0.65cm}|m{0.65cm}m{0.65cm}m{0.65cm}m{0.65cm}|m{0.65cm}|m{0.65cm}m{0.65cm}m{0.65cm}m{0.65cm}|m{0.65cm}|m{0.65cm}}
   & 0 & 1 & 2 & 3  &4 & 5 & 6& 7 & 8 & 9 & 10 & 11 & 12 & 13 & 14 & 15 \\ \hline 
0 & 1 &  &  &   & &  & &  &  &  &  &  & &  & &  \\
1 & 0 & 1 &  &   & &  & &  &  &  &  &  & &  & &  \\ 
2 & 1 & 0 & 1 &   & &  & &  &  &  &  &  & &  & &  \\ 
3 & 0 & 2 & 0 & 1 & &  & &  &  &  &  &  & &  & &  \\ 
4 & 2 & 0 & 3 & 0 & 1 &  & &  &  &  &  &  & &  & &  \\    
5 & 0 & 5 & 0 & 4 & 0 & 1 & &  &  &  &  &  & &  & &  \\
6 & 5 & 0 & 9 & 0 & 5 & 0 & 1 &  &  &  &  &  & &  & &  \\    
7 & 0 & 14 & 0 & 14 & 0 & 6 & 0 & 1 &  &  &  &  & &  & &  \\
8 & 14 & 0 & 28 & 0 & 20 & 0 & 7 & 0 & 1 &  &  &  & &  & &  \\    
9 & 0 & 42 & 0 & 48 & 0 & 27 & 0 & 8 & 0 & 1 &  &  & &  & &  \\
10 & 42 & 0 & 90 & 0 & 75 & 0 & 35 & 0 & 9 & 0 & 1 &  & &  & &  \\    
11 & 0 & 132 & 0 & 165 & 0 & 110 & 0 & 44 & 0 & 10 & 0 & 1 & &  & &  \\
12 & 132 & 0 & 297 & 0 & 275 & 0 & 154 & 0 & 54 & 0 & 11 & 0 & 1 &  & &  \\    
13 & 0 & 429 & 0 & 572 & 0 & 429 & 0 & 208 & 0 & 65 & 0 & 12 & 0 & 1 & &  \\
14 & 429 & 0 & 1001 & 0 & 1001 & 0 & 637 & 0 & 273 & 0 & 77 & 0 & 13 & 0 & 1&  \\    
15 & 0 & 1430 & 0 & \color{green}{2002} & 0 & \color{red}{1638} & 0 & 910 & 0 & 350 & 0 & 90 & 0 & \color{green}{14} & 0 & \color{red}{1} 
\end{tabular}
\end{center}
\normalsize

\subsection{Simplified versions of the four extremal rays and four other extremal divisors}\label{extremalformulae}
In Section \ref{extremal} we show that for $\ell \in \{1,2,g-1,g\}$ the divisors $\Dl$ generate extremal rays
of the symmetric nef cone.  We give expressions for their divisor
classes now.

\begin{theorem}\label{extremaldivisors}
\begin{eqnarray*}
\mathbb{D}^{\sL_2}_{1,(1,\ldots,1)} & = &   \sum_{2 \leq k \leq g+1, k \, even} \frac{k(n-k)}{4(n-1)} B_{k}  + \sum_{2 \leq k \leq g+1, k \, odd} \frac{(k-1)(n-k-1)}{4(n-1)} B_{k}; \\
\mathbb{D}^{\sL_2}_{2,(1,\ldots,1)}  & = &
3\cdot2^{g-1} \left(\sum_{2 \leq k \leq g+1, k \, even} \left(\frac{k(n-k)}{8(n-1)} - \frac{1}{6} \right) B_{k}  + \sum_{2 \leq k \leq g+1, k \, odd}  \frac{(k-1)(n-k-1)}{8(n-1)} B_{k}\right); \\
\mathbb{D}^{\sL_2}_{g-1,(1,\ldots,1)}   & = &
(g-1)\left(\sum_{k=2}^{g}  \frac{(k-1)k}{(n-1)} B_{k}   + \left(
    \frac{g^2-g-1}{(n-1)} \right)B_{g+1} \right); \\
\mathbb{D}^{\sL_2}_{g,(1,\ldots,1)}  & = & 2\sum_{k=2}^{g+1}  \frac{(k-1)k}{(n-1)} B_{k}.
\end{eqnarray*}

\end{theorem}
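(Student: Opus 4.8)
The plan is to obtain all four classes by specializing the general formula of Proposition \ref{reducedformula},
$$\Dl = \frac{1}{2(\ell+2)}\sum_{i=2}^{g+1}\left[\frac{i(n-i)}{n-1}\,\beta_1 - \beta_i\right]B_i, \qquad \beta_1 = \tfrac32\, r_{\ell}(n),\quad \beta_i = \sum_{t=0}^{\ell}\Bigl(\tfrac{t^2}{2}+t\Bigr)\, r_{\ell}(i,t)\, r_{\ell}(n-i,t),$$
so that everything reduces to evaluating $r_{\ell}(n)$ and the weighted rank-products $\beta_i$ for each $\ell\in\{1,2,g-1,g\}$ and then collecting the coefficient of each $B_i$. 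Two structural facts do most of the pruning throughout: the Odd Sum Rule (Lemma \ref{oddsumlemma}), which forces $r_{\ell}(i,t)=0$ unless $i\equiv t\pmod 2$, and the Generalized Triangle Inequality (Lemma \ref{GeneralizedTriangleInequality}), which forces $r_{\ell}(i,t)=0$ once $t>i$. A single algebraic identity, $i(n-i)-(n-1)=(i-1)(n-i-1)$, is what converts the raw coefficients into the even/odd split that appears in the first two formulas.

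For $\ell=1$ and $\ell=2$ the computation is short because the weight set $\{0,1,\ldots,\ell\}$ is tiny and the Casimir scalar satisfies $c(0)=0$. When $\ell=1$ only $t=1$ survives, so $\beta_i=\tfrac32\, r_{1}(i,1)\,r_{1}(n-i,1)$; by parity this vanishes for $i$ even and equals $\tfrac32$ for $i$ odd, using $r_{1}(2k+1,1)=1$ and $r_{1}(n)=r_{1}(2g+1,1)=1$ from Lemma \ref{ranks for int numbers of D1, D2}. When $\ell=2$, parity isolates $t=1$ for odd $i$ and $t=2$ for even $i$, and Lemma \ref{ranks for int numbers of D1, D2} supplies $r_{2}(2k+1,1)=2^{k}$ and $r_{2}(2k,2)=2^{k-1}$; since the two factors' indices sum to $n=2g+2$, each surviving product collapses to a fixed power of $2$, giving $\beta_i\in\{3\cdot 2^{g-1},\,2^{g+1}\}$ according to the parity of $i$, while $\beta_1=\tfrac32 r_2(n)=3\cdot 2^{g-1}$. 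Substituting into the master formula and simplifying with the identity above yields the first two displayed classes.

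The cases $\ell=g-1$ and $\ell=g$ are where the real work lies, and I expect them to be the main obstacle. Here $t$ ranges over many values and the two factors of $\beta_i$ behave very differently: the ``short'' factor $r_{\ell}(i,t)$, with $i\le g+1$ close to the level, is cut down by $t\le i$ and is often pinned by the near-diagonal values of Lemma \ref{ranks for g-1} ($r_{\ell}(k,k)=1$, $r_{\ell}(k,k-2)=k-1$, $r_{\ell}(\ell,\ell+2)=\ell$), so only a handful of $t$ contribute for small $i$; but the ``long'' factor $r_{\ell}(n-i,t)$, with $n-i$ between $g+1$ and $2g$, is genuinely large and in general both factors must be read off from the closed-form ranks of Proposition \ref{rlclosedform} (equivalently, from the Catalan-type numbers $r_{\infty}(n-i,t)$ corrected by their reflections). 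The obstacle is thus to evaluate each $\beta_i=\sum_{t}(\tfrac{t^2}{2}+t)\,r_{\ell}(i,t)\,r_{\ell}(n-i,t)$ in closed form: this amounts to a telescoping binomial identity among the $r_{\infty}(n-i,t)$, after which $\beta_i$ collapses to a simple expression linear in $i$ that, fed through the master formula, produces the stated $\tfrac{(k-1)k}{n-1}$-type coefficients.

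As an independent check on the $\ell=g-1$ and $\ell=g$ formulas (and a possible shortcut that sidesteps the binomial manipulation), I would exploit that $\{F_{1,1,i}:1\le i\le g\}$ is a basis of $N_1(\ov{M}_{0,n}/S_n)_{\Q}$ (Corollary \ref{abasis}) pairing nondegenerately with the boundary classes $B_2,\ldots,B_{g+1}$. The F-curve intersection numbers of these two divisors are especially clean---concentrated on a single curve by Corollaries \ref{g-1} and \ref{g}---so one can instead determine the $B_k$-coefficients by inverting the banded intersection matrix $\bigl(B_k\cdot F_{1,1,i}\bigr)$ and checking that the proposed closed forms reproduce exactly those intersection numbers. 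Either route reduces the theorem to bookkeeping once the rank inputs are assembled.
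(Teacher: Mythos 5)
Your proposal reaches the right formulas, but only half of it follows the paper's route, and the half that differs is exactly where you flag the ``main obstacle.'' The paper does not derive any of the four classes from Proposition \ref{reducedformula}: it writes down the four candidate expressions and \emph{verifies} them by intersecting with the curve basis $\{F_{1,1,i}: 1\le i\le g\}$ of Corollary \ref{abasis}, using $\sum_k c_k B_k\cdot F_{1,1,i}=-c_{i+2}-c_i+c_2+2c_{i+1}$ and matching against the intersection numbers of Corollaries \ref{1}, \ref{2}, \ref{g-1}, and \ref{g}; since the pairing between the $B_k$ and that curve basis is nondegenerate, this pins down each class. That is precisely your ``independent check,'' promoted to the entire proof. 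Your primary route --- direct evaluation of $\beta_1$ and the $\beta_i$ --- is correct and complete for $\ell=1,2$: the parity pruning via the Odd Sum Rule, the rank values from Lemma \ref{ranks for int numbers of D1, D2}, the collapse of each product $r_\ell(i,t)\,r_\ell(n-i,t)$ to a fixed power of $2$, and the identity $i(n-i)-(n-1)=(i-1)(n-i-1)$ do exactly what you say, and for those two cases your argument is arguably stronger than the paper's because it derives the coefficients rather than confirming a guess. For $\ell=g-1$ and $\ell=g$, however, your primary route is not yet a proof: the ``telescoping binomial identity'' that is supposed to collapse $\beta_i=\sum_t(\tfrac{t^2}{2}+t)\,r_\ell(i,t)\,r_\ell(n-i,t)$ to something linear in $i$ is asserted, not established, and nothing in the paper supplies it (indeed, for these levels the short factor $r_\ell(i,t)$ is nonzero for essentially all $t\le i$ of the correct parity, so the sum genuinely has many terms). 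You must therefore fall back on your secondary route for those two cases, which is legitimate, closes the argument, and is what the authors actually do.
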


\begin{proof}
We intersect
the expressions above with the basis of $1$-cycles $\{F_{1,1,i}:  1\le
i \le g\}$ and check that we get the formulas given in Corollaries \ref{1}, \ref{2}, \ref{g-1}, and \ref{g}.  

Note the simple but useful formula which follows from \cite{KM}:
$$ \sum_{k=2}^{n/2} c_k B_k \cdot F_{1,1,i}=-c_{i+2}-c_i+c_2+2c_{i+1}.$$

We consider $\mathbb{D}^{\sL_2}_{1,(1,\ldots,1)}$ first.  Multiplying by the constant $4(n-1)$ one has for $i$ even, that
$$4(n-1)\mathbb{D}^{\sL_2}_{1,\{1,\ldots,\}} \cdot F_{1,1,i}=-(i+2)(n-(i+2))-i(n-i)+2(n-2)+2i(n-i-2)=0,$$
and for $i$ odd,
$$4(n-1)\mathbb{D}^{\sL_2}_{1,\{1,\ldots,\}} \cdot F_{1,1,i}=-(i+1)(n-i-3)-(i-1)(n-i-1)+2(n-2)+2(i+1)(n-(i+1))=4(n-1).$$
In other words,
\begin{equation}
\mathbb{D}^{\sL_2}_{1,(1,\ldots,1)} \cdot F_{n-i-2, i,1,1}  = \left\{\begin{matrix}
  1 & i \mbox{ odd;}\\
  0 &  i \mbox{ even,} 
\end{matrix} \right.
\end{equation}
Therefore, by Corollary \ref{basis} and Lemma \ref{1}, we have the correct expression for $\mathbb{D}^{\sL_2}_{1,(1,\ldots,1)}$.

Next, we consider $\mathbb{D}^{\sL_2}_{2,(1,\ldots,1)}$.    One checks easily that for $i$ even, 
$$\frac{1}{3}\cdot \frac{1}{2^{g-1}} \mathbb{D}^{\sL_2}_{2,(1,\ldots,1)}\cdot F_{1,1,i} =\frac{1}{6} \  \implies \ \mathbb{D}^{\sL_2}_{2,(1,\ldots,1)}\cdot F_{1,1,i} =2^{g-2}.$$
It is also straightforward to check that for $i$ odd, $\mathbb{D}^{\sL_2}_{2,(1,\ldots,1)}\cdot F_{1,1,i} =0$. 
In other words,
\begin{equation}
\mathbb{D}^{\sL_2}_{2,(1,\ldots,1)} \cdot F_{n-i-2, i,1,1}  = \left\{\begin{matrix}
  0 & i \mbox{ odd;}\\
  2^{g-2} &  i \mbox{ even,} 
\end{matrix} \right.
\end{equation} Therefore, by Corollary \ref{basis} and Lemma \ref{2}, we have the correct expression for $\mathbb{D}^{\sL_2}_{2,(1,\ldots,1)}$.

Next, we consider $\mathbb{D}^{\sL_2}_{g-1,(1,\ldots,1)}$.  For $i \le g-2$, 
$$\frac{(n-1)}{(g-1)}\mathbb{D}^{\sL_2}_{g-1,(1,\ldots,1)}\cdot F_{1,1,i}=-(i+1)(i+2)-(i-1)i+2+2i(i+1)=0.$$
For $i=g$, 
\begin{multline}
\frac{(n-1)}{(g-1)}\mathbb{D}^{\sL_2}_{g-1,(1,\ldots,1)}\cdot F_{1,1,g,g}=-2c_g+c_2+2c_{g+1}\\
=-2g(g-1)+2+2\frac{(g^3-2g^2+1)}{(g-1)}=-2g(g-1)+g(2g-2)=0.
\end{multline}%
Whereas, if $i=g-1$, then $i+1=g$ and $i+2=g+1$, and so
\begin{eqnarray*}\frac{(n-1)}{(g-1)}\mathbb{D}^{\sL_2}_{g-1,(1,\ldots,1)}\cdot F_{1,1,g-1}
& = & -\frac{(g^3-2g^2+1)}{(g-1)}-(g-2)(g-1)+2+2g(g-1) =2g+1 \\
\Rightarrow \mathbb{D}^{\sL_2}_{g-1,(1,\ldots,1)}\cdot F_{1,1,g-1} & =&  g-1
\end{eqnarray*}
\noindent
 Therefore, by Corollary \ref{basis} and Lemma \ref{g-1}, we have the correct expression for $\mathbb{D}^{\sL_2}_{g-1,(1,\ldots,1)}$.

Finally, we consider $\mathbb{D}^{\sL_2}_{g,(1,\ldots,1)}$.  For $i \le g-1$, 
$$2(n-1) \mathbb{D}^{\sL_2}_{g,(1,\ldots,1)}\cdot F_{1,1,i}=-(i+1)(i+2)-(i-1)i+2+2i(i+1)=0,$$
and for $i=g$,
$$2(n-1) \mathbb{D}^{\sL_2}_{g,(1,\ldots,1)}\cdot F_{1,1,g}=-2g(g-1)+2+2g(g+1)=2(n-1).$$
\noindent
Therefore, by Corollary \ref{basis} and Lemma \ref{g}, we have the correct expression for $\mathbb{D}^{\sL_2}_{g,(1,\ldots,1)}$.
\end{proof}


We have also found simple formulas for the classes of some of the
other divisors by ad hoc methods.  We list a few such formulas here.

\begin{example} $\mathbb{D}^{\sL_2}_{3,(1,\ldots,1)}$.  Let $F_n$ be the Fibonacci numbers given by $F_{0}=0, F_{1}=1$, and $F_{n} = F_{n-1} + F_{n-2}$ for $n \geq 2$.  Then $\mathbb{D}^{\sL_2}_{3,(1,\ldots,1)}=\sum b_k B_k$, where 
if $k=2j+1$ is odd, then
\[  b_{k} = \frac{1}{10}\left( \frac{3}{2 }\frac{k(n-k)}{n-1} F_{2g+1} - \frac{3}{2} F_{2j+1} F_{2(g-j)+1}  - \frac{15}{2} F_{2j} F_{2(g-j)} \right).
\] 
and if $k=2j$ is even,  then
\[  b_{k} = \frac{1}{10}\left( \frac{6j(g-j+1)}{2g+1} F_{2g+1} - 4 F_{2j} F_{2(g-j+1)} \right).
\] 
\end{example}

\begin{example} $\mathbb{D}^{\sL_2}_{4,(1,\ldots,1)} = \sum b_k B_k$, where if $k$ is odd, then
\begin{displaymath}b_k = \frac{1}{16} \left(  \frac{k(n-k)}{n-1} (3^{\frac{n-2}{2}} + 1) - \frac{1}{2}(3^{\frac{k-1}{2}}+1)(3^{\frac{n-k-1}{2}}+1) - \frac{5}{2}(3^{\frac{k-1}{2}}-1)(3^{\frac{n-k-1}{2}}-1) \right)
\end{displaymath}
and if $k$ is even, then
\begin{displaymath}b_k = \frac{1}{12} \left(  \frac{k(n-k)}{n-1} \frac{3}{4} (3^{\frac{n-2}{2}} + 1) - 4\cdot 3^{\frac{n-4}{2}}- 3(3^{\frac{k-2}{2}}-1)(3^{\frac{n-k-2}{2}}-1) \right)
\end{displaymath}
\end{example}

\begin{example}
$\mathbb{D}^{\sL_2}_{g-2,(1,\ldots,1)}= \sum b_k B_k$, where 
\begin{displaymath}
b_{k}  = \left\{
\begin{array}{ll}
\frac{4(n-7)(n-2)(k-1)k}{16(n-1)} & \mbox{ if $k \leq g-1$}\\
\frac{n^4-17n^3+90n^2-152n+96}{16(n-1)} & \mbox{ if $k =g $}\\
\frac{n^4-15n^3+60n^2-20n-32}{16(n-1)} & \mbox{ if $k =g+1$}.\\
\end{array}
\right.
\end{displaymath}
\end{example}

\section{Position of the divisors $\Dl$ in the nef cone}\label{position}

The divisors $\mathbb{D}^{\sL_2}_{\ell,(1,\ldots,1)}$ are semiample, and so
(suitably large multiples of) these divisors define morphisms on
$\ov{M}_{0,n}$.  Any $S_n$-invariant divisor lies on the interior of the cone
of effective divisors (\cite{KM} and \cite{GibneyCompositio}), and
since the $\mathbb{D}^{\sL_2}_{\ell,(1,\ldots,1)}$ are symmetric, the
morphisms they define are birational.   As the divisors are nef, we
can tell more about the morphisms they define by finding their
location in the nef cone.  Base point free divisors in the interior of
the nef cone give embeddings, and those on the boundary 
define contractions.

\subsection{The $\Dl$ are extremal}\label{extremal}
In this section we show that the morphisms given by the divisors $\mathbb{D}^{\sL_2}_{\ell,(1,\ldots,1)}$ on $\ov{M}_{0,n}/S_n$ are in fact birational contractions by proving the following.

\begin{theorem}  The divisors $\mathbb{D}^{\sL_2}_{\ell,(1,\ldots,1)}$, for $1 \le \ell \le g $
reside on the boundary of  $\Nef(\ov{M}_{0,n}/S_n)$, and
for $\ell \in \{1,2,g-1,g\}$ the divisors span extremal rays of the cone. 
In particular, the divisors $\mathbb{D}^{\sL_2}_{\ell,(1,\ldots,1)}$, for $1 \le \ell \le g$  define birational contractions on $\ov{M}_{0,n}/S_n$.
\end{theorem}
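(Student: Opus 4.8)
The plan is to establish two things: that each $\mathbb{D}^{\sL_2}_{\ell,(1,\ldots,1)}$ lies on the boundary of $\Nef(\ov{M}_{0,n}/S_n)$, and that the four distinguished values $\ell \in \{1,2,g-1,g\}$ give divisors spanning extremal rays. First I would observe that membership in the nef cone is already known, since these are conformal blocks divisors and hence semiample by Fakhruddin's results; thus it suffices to locate them on the boundary. To see that each $\Dl$ lies on the boundary, I would invoke the intersection computations already in hand: by Corollary \ref{zeroint}, for each $\ell$ there exist $\operatorname{F}$-curves $F_{n-i-2,i,1,1}$ (for instance those with $i<\ell$ or $i\not\equiv\ell\bmod 2$) against which $\Dl$ has intersection number zero. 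A semiample divisor meeting some effective curve in degree zero cannot lie in the interior of the nef cone, so each $\Dl$ sits on the boundary.

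The core of the argument is extremality for $\ell\in\{1,2,g-1,g\}$. Here I would use the basis of $1$-cycles $\mathscr{C}_1=\{F_{1,1,i}:1\le i\le g\}$ furnished by Corollary \ref{abasis}, together with the explicit intersection numbers computed in Corollaries \ref{1}, \ref{2}, \ref{g-1}, and \ref{g}. The key point is that a nef divisor $D$ spans an extremal ray of $\Nef(\ov{M}_{0,n}/S_n)$ precisely when the curve classes on which $D$ vanishes span a hyperplane (a codimension-one subspace) in $N_1(\ov{M}_{0,n}/S_n,\Q)$, equivalently when the face of the nef cone cut out by those zero-intersection curves is one-dimensional. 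Since $N_1$ is $g$-dimensional by Corollary \ref{abasis}, I would show for each of the four divisors that exactly $g-1$ of the basis curves $F_{1,1,i}$ meet it in degree zero, and that these $g-1$ curves are linearly independent (which is immediate, as they form a subset of the basis $\mathscr{C}_1$). Reading off the relevant corollaries: $\mathbb{D}^{\sL_2}_{1}$ vanishes on all $F_{1,1,i}$ with $i$ even; $\mathbb{D}^{\sL_2}_{2}$ vanishes on all $F_{1,1,i}$ with $i$ odd; $\mathbb{D}^{\sL_2}_{g-1}$ vanishes on all $F_{1,1,i}$ with $i\ne g-1$; and $\mathbb{D}^{\sL_2}_{g}$ vanishes on all $F_{1,1,i}$ with $i\le g-1$. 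In each case exactly $g-1$ of the $g$ basis curves are annihilated, so the zero locus is a hyperplane and the ray is extremal.

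Turning the hyperplane condition into extremality requires the standard duality between the nef cone and the cone of (movable/effective) curves. The cleanest formulation is: if a nef class $D$ is nonnegative on a spanning set of curves and the curves on which it vanishes span a hyperplane $H\subset N_1$, then any nef class $D'$ vanishing on all of $H$ must be proportional to $D$, provided we know the nef cone is full-dimensional and $D$ is nonzero. I would phrase the argument as a linear-algebra statement dual to the intersection pairing: the supporting hyperplane of $\Nef$ at $D$ is $\{E : E\cdot C_i = 0 \text{ for the vanishing curves } C_i\}$, and since the $C_i$ span a hyperplane this supporting hyperplane in $\NS(\ov{M}_{0,n}/S_n)_\Q$ is one-dimensional, forcing $D$ to generate an extremal ray.

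The main obstacle I anticipate is not the extremality argument itself, which reduces to the bookkeeping above, but rather justifying cleanly the duality step — namely that vanishing against a hyperplane's worth of curves in the \emph{pseudoeffective} (or at least movable) cone of curves genuinely forces extremality of the nef class, rather than merely placing it on a positive-dimensional face. This is where I would need the fact that the $\operatorname{F}$-curves in $\mathscr{C}_1$ are not just linearly independent but are actual effective curve classes bounding the nef cone, so that their orthogonal complements are genuine supporting hyperplanes. The final sentence of the theorem, that each $\Dl$ for $1\le\ell\le g$ defines a birational contraction, then follows formally: each is big (being $S_n$-invariant, hence in the interior of the effective cone by \cite{KM} and \cite{GibneyCompositio}) and semiample, so its associated morphism is birational; and because each lies on the boundary of the nef cone rather than its interior, the morphism is not an embedding but a nontrivial contraction.
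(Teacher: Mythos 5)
Your argument is correct for $\ell=g-1$ and $\ell=g$, and your treatment of the boundary statement and of the final ``birational contraction'' sentence matches the paper. But there is a genuine gap in the extremality claim for $\ell=1$ and $\ell=2$: you assert that $\mathbb{D}^{\sL_2}_{1}$ and $\mathbb{D}^{\sL_2}_{2}$ each annihilate ``exactly $g-1$ of the $g$ basis curves'' $F_{1,1,i}$. That is false. By Corollary \ref{1} (see also the $n=16$ table), $\mathbb{D}^{\sL_2}_{1}\cdot F_{1,1,i}=1$ for \emph{every} odd $i$, so it vanishes only on the $\lfloor g/2\rfloor$ curves with $i$ even; likewise $\mathbb{D}^{\sL_2}_{2}\cdot F_{1,1,i}=2^{g-2}$ for every even $i$, so it vanishes only on the roughly $g/2$ curves with $i$ odd. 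For $g\ge 3$ this is strictly fewer than $g-1$, so your count only places these two divisors on a face of codimension about $g/2$, not on an extremal ray. The hyperplane-of-vanishing-curves criterion you invoke (which is the paper's criterion too, and is fine as a sufficient condition, though not an ``if and only if'' as you phrase it) therefore does not close the argument for $\ell=1,2$.

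This is exactly why the paper constructs the two auxiliary families of independent $\F$-curves in Theorem \ref{3curvefamilies}. For $\ell=1$ one uses $\mathscr{C}_2=\{F_{2,2,i}\}$: every such partition has an even part, so $\mathbb{D}^{\sL_2}_{1}\cdot F_{2,2,i,n-4-i}=0$ for all $i$ by \cite{Fakh}*{Prop. 5.2}, and independence of these $g-1$ curves gives extremality. For $\ell=2$ one uses $\mathscr{C}_3$, consisting of curves $F_{a,b,c,d}$ with all parts odd, on which $\mathbb{D}^{\sL_2}_{2}$ vanishes because the only admissible weight vector is $\vec{\mu}=(1,1,1,1)$ and $\deg\mathbb{V}(\sL_2,2,(1,1,1,1))=0$. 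To repair your proof you would need to supply these (or equivalent) additional independent curve classes in the kernel; without them the claim that $\mathbb{D}^{\sL_2}_{1}$ and $\mathbb{D}^{\sL_2}_{2}$ span extremal rays is unproved.
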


In Section \ref{classes} we gave the classes of the divisors $\mathbb{D}^{\sL_2}_{\ell,(1,\ldots,1)}$.  As an application, we identify the corresponding morphisms, which we discuss in detail in Section \ref{morphisms}.

\begin{proof}
We write $n=2(g+1)$ and recall that the space  $\Pic(\ov{M}_{0,n}/S_n)\otimes \mathbb{R}$ is $g$-dimensional.
Therefore, a divisor lies on a face of dimension $k$ if it intersects $g-k$
basis elements in degree zero.  Hence the divisor $\mathbb{D}^{\sL_2}_{\ell,\vec{\lambda}}$ lies on a face of codimension at least $\ell$ by Corollary \ref{zeroint} using the basis $\{F_{1,1,i}: 1\le i \le g\}$, given by Corollary \ref{basis}.
In particular by these Corollaries,  $\mathbb{D}^{\sL_2}_{g-1,\vec{\lambda}}$ and $\mathbb{D}^{\sL_2}_{g,\vec{\lambda}}$
generate extremal rays of the cone.
To show that $\mathbb{D}^{\sL_2}_{1,(1,\ldots,1)}$ and $\mathbb{D}^{\sL_2}_{2,(1,\ldots,1)}$ also generate extremal rays, we will
use the two other sets of independent curves $\mathscr{C}_2$ and $\mathscr{C}_3$ from Theorem \ref{3curvefamilies}.

First, we note that by \cite{Fakh}*{Prop. 5.2}, which we can apply since $\ell=1$, one has
  $\mathbb{D}^{\sL_2}_{1,(1,\ldots,1)} \cdot F_{a,b,c,d} = 0$ if $abcd \equiv 0 \bmod 2$.   In particular, $\mathbb{D}^{\sL_2}_{1,(1,\ldots,1)} \cdot F_{2,2,i,n-4-i} = 0$ for all $1 \le i \le g-1$.  Since these are the curves from $\mathscr{C}_2$, and since $\mathbb{D}^{\sL_2}_{1,(1,\ldots,1)}$ is nontrivial, we have that it spans an extremal ray of the symmetric nef cone.

Next, we show that  $\mathbb{D}^{\sL_2}_{2,(1,\ldots,1)}  \cdot F_{a,b,c,d} = 0$ if $abcd \equiv 1 \bmod 2$.  This implies $a,b,c,d$ are all odd. This is proved using \cite{Fakh}*{Prop. 2.5},
$$\mathbb{D}^{\sL_2}_{2,(1,\ldots,1)} \cdot F_{a,b,c,d}
=\sum_{\stackrel{\vec{\mu}=\{\mu_1,\ldots,\mu_4\}}{0 \le \mu_i \le 2}}
\deg(\mathbb{V}(\sL_2, 2, \vec{\mu}))r_{2}(a,\mu_1)r_{2}(b,\mu_2)r_{2}(c,\mu_3)r_{2}(d,\mu_4).$$
Since the level is 2, each $\mu_{i}$ can only be 0, 1, or 2.  If $\mu_{i}$ is even for any $i$, then the corresponding rank is $0$, e.g. if $\mu_{1}$ is even then $r_{a \mu_{1}} = 0 $.  It remains only to consider the case where $\mu_{i}=1$ for $i=1,\ldots,4$.  But an explicit calculation using \ref{fakh deg formula} shows that 
$\deg (\mathbb{V}(\sL_2, 2, (1,1,1,1)))= 0$.  Thus, there are no nonzero contributions to Fakhruddin's formula \ref{fakh 2.5}.
We show in Proposition \ref{oddcurves}  that for $g=2k$ or $g=2k-1$, the set $\mathscr{C}_3=\{F_{3,3,2i+1}: 0 \le i \le k-2\}\cup \{F_{1,1,2i+1}: 0 \le i \le k-1\}$ consists of independent curves.
Therefore $\mathbb{D}^{\sL_2}_{2,(1,\ldots,1)}$ is extremal in the
symmetric nef cone.

\end{proof}

\subsection{The $\Dl$ are not all log canonical}

The Ray Theorem of Keel and McKernan \cite{KM}*{Thm. 1.2} and its extension by Farkas and Gibney \cite{FarkasGibney}*{Thm. 4} gives a tool for detecting nef divisors in what we call the log canonical part of the cone.  

\begin{definition}
We say a divisor $D$ on $\ov{M}_{0,n}$ is \emph{log canonical} if $D$ may be expressed as an effective combination
$$D=c(K_{\ov{M}_{0,n}}+\sum_{I \subset \{1,\ldots,n\}}c_I\delta_I),$$
where $c$ and the $c_I$ are any nonnegative rational numbers such that that $0 \le c_I \le 1$ for all $I$.
\end{definition}

If a divisor $D$ is log canonical, and $D$  intersects  every
$\operatorname{F}$-curve nonnegatively, then $D$ is nef by the Ray
Theorem.  Moreover, $D$ is semiample by \cite{BCHM}. 

For symmetric divisors, we can say even more.  Suppose $D$ is $S_n$-equivariant.  Then if $D$ is also log canonical, it is actually possible to write $D$ in the form 
$$D = c( K_{\ov{M}_{0,n}}+\sum_{i=2}^{g+1} b_i  B_i),$$ where $c>0$ and $0 \leq b_i \leq 1$ for all $i$.
This can be easily proved by averaging the log canonical expression for $D$ over $S_n$.  We call a divisor of the form $c( K_{\ov{M}_{0,n}}+\sum_{i=2}^{g+1} b_i  B_i)$ with $0 \leq b_i \leq 1$ \emph{symmetrically log canonical}.

Some of the divisors $\Dl$ are of this form, but others are not.  We interpret the failure of these divisors to be symmetrically log canonical to mean that they are outside the part of $\Nef({\ov{M}_{0,n}})$ that can be understood combinatorially.  This motivated us to study the geometry behind them in more detail.

\begin{proposition}\label{nlc}
\begin{enumerate}
\item $\mathbb{D}^{\sL_2}_{1,(1,\ldots,1)}$ is symmetrically log canonical for $n=6,8,10$, but not for $n \geq 12$.
\item $\mathbb{D}^{\sL_2}_{2,(1,\ldots,1)}$ is symmetrically log canonical for all $n \geq 6$.
\item $\mathbb{D}^{\sL_2}_{g-1,(1,\ldots,1)}$ is symmetrically log canonical for $n=10,12,14$, but not for $n \geq 16$.
\item $\mathbb{D}^{\sL_2}_{g,(1,\ldots,1)}$ is symmetrically log canonical for $n=8,10,12$, but not for $n \geq 14$.
\end{enumerate}
\end{proposition}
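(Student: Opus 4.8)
The plan is to convert ``symmetrically log canonical'' into a one-parameter feasibility question and then read off the critical value of $n$ for each divisor. Write $n=2(g+1)$ and set $q_i:=\frac{i(n-i)}{n-1}$, so that $K_{\ov{M}_{0,n}}=\sum_{i=2}^{g+1}(q_i-2)B_i$, and write $\mathbb{D}^{\sL_2}_{\ell,(1,\ldots,1)}=\sum_{i=2}^{g+1}d_i B_i$ with the $d_i$ read off from Theorem \ref{extremaldivisors}. By definition the divisor is symmetrically log canonical iff it equals $c\bigl(K_{\ov{M}_{0,n}}+\sum_{i=2}^{g+1} b_i B_i\bigr)$ for some $c>0$ and $0\le b_i\le 1$; comparing coefficients of $B_i$ gives $b_i=\frac{d_i}{c}-(q_i-2)$. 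Setting $x=1/c$, the constraint $0\le b_i\le 1$ becomes $q_i-2\le d_i x\le q_i-1$. Since every $d_i>0$ and every $q_i>1$ for $2\le i\le g+1$, I would record the clean criterion that the divisor is symmetrically log canonical iff
\[ \max_{i}\frac{q_i-2}{d_i}\ \le\ \min_{j}\frac{q_j-1}{d_j}, \]
and abbreviate $L_i:=(q_i-2)/d_i$ and $U_i:=(q_i-1)/d_i$.

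For $\mathbb{D}^{\sL_2}_{g}$ and $\mathbb{D}^{\sL_2}_{g-1}$ the coefficients $d_i$ are, up to a common factor, $i(i-1)$ (with only the top coefficient altered for $\ell=g-1$), so the $L_i,U_i$ are explicit rational functions of $i$ and $n$. To prove these are \emph{not} symmetrically log canonical for the large values of $n$ it suffices to exhibit a single violating pair $L_i>U_j$, with no need to locate the true extremizers. I would take $i=3$, $j=g+1$: one finds $L_3=\frac{n-7}{12}$ and $U_{g+1}=\frac{n-2}{2n}$ for $\mathbb{D}^{\sL_2}_{g}$, so $L_3>U_{g+1}$ reduces to $(n-1)(n-12)>0$, i.e. $n\ge 14$; for $\mathbb{D}^{\sL_2}_{g-1}$ the same pair yields the cubic inequality $2g^3-13g^2+3g+5>0$, i.e. $g\ge 7$, i.e. $n\ge 16$. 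For the finitely many remaining values ($n=8,10,12$, resp. $n=10,12,14$) I would verify symmetric log canonicity directly by producing an explicit admissible $c$ (equivalently an $x$ in the nonempty interval $[\max_i L_i,\min_j U_j]$) and checking $0\le b_i\le 1$; this is a finite computation.

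The divisor $\mathbb{D}^{\sL_2}_{1}$ is the same in spirit, but the coefficients split by parity, $d_k\propto k(n-k)$ for $k$ even and $d_k\propto (k-1)(n-k-1)$ for $k$ odd, and the maximum of $L_i$ is attained at the largest odd index. The violating pair is $(k^{*},2)$, where $k^{*}=g+1$ if $g$ is even and $k^{*}=g$ if $g$ is odd; computing $U_2=\frac{2g-1}{g}$ together with the two values of $L_{k^{*}}$, the inequality $L_{k^{*}}>U_2$ reduces to $(2g+1)(g-4)>0$ when $g$ is even and to $2g^3-7g^2-6g-1>0$ when $g$ is odd, and both first become violating exactly at $n\ge 12$; the cases $n=6,8,10$ are checked directly. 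I expect this parity bookkeeping --- identifying $k^{*}$ and confirming that the single pair genuinely witnesses infeasibility precisely from $n=12$ onward, with $n=10$ sitting exactly on the boundary $L_{k^{*}}=U_2$ --- to be the main obstacle.

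Finally, $\mathbb{D}^{\sL_2}_{2}$ is the only case asserting symmetric log canonicity for \emph{all} $n$, so here I must produce a feasible $x$ uniformly in $n$ rather than case-check. After dividing out the scalar $3\cdot 2^{g-1}$, the key is the identity $(k-1)(n-k-1)=k(n-k)-(n-1)$: for odd $k$ it forces $U_k$ to be a single constant (the value $8$ in this normalization), and the same identity shows $L_i\le 8\le U_j$ for every $i,j$ and every $n\ge 6$ (the even-index comparisons collapse to $n-1\ge 0$). Hence $\max_i L_i\le \min_j U_j$ for all $n$, so $\mathbb{D}^{\sL_2}_{2}$ is symmetrically log canonical throughout.
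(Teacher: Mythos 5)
Your proposal is correct and follows essentially the same route as the paper: rewrite symmetric log canonicity as the feasibility of $q_i-2\le d_i x\le q_i-1$ for a single $x=1/c$, exhibit one violating pair of indices to rule out the large-$n$ cases, verify the finitely many small cases directly, and observe the uniform identity (your $x=8$, the paper's displayed identity $\frac{8}{3\cdot 2^{g-1}}\mathbb{D}^{\sL_2}_{2,(1,\ldots,1)}=K_{\ov{M}_{0,n}}+\frac{2}{3}\sum_{i\,\mathrm{even}}B_i+\sum_{i\,\mathrm{odd}}B_i$) for $\ell=2$. The one imprecision is that for $\ell=1$ your even-$g$ branch $(2g+1)(g-4)>0$ first fails only at $g=6$, i.e.\ $n=14$, not $n=12$; since $n=12$ corresponds to $g=5$ odd this does not affect the conclusion.
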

\begin{proof}
The claims regarding the divisors $\Dl$ for small values of $n$ may be verified by direct calculations.

The following identity shows that $\mathbb{D}^{\sL_2}_{2,(1,\ldots,1)}$ is symmetrically log canonical for all $n$:
$$\frac{8}{3 \cdot 2^{g-1}}\mathbb{D}^{\sL_2}_{2,(1,\ldots,1)}=K_{\ov{M}_{0,n}}+\frac{2}{3}\sum_{i \text{ even}}B_i+\sum_{i \text{ odd}} B_i.$$

It is easy to show that for $n \geq 12$, $g$ odd, there is no triple $(c,b_2,b_g)$ making $\mathbb{D}^{\sL_2}_{1,(1,\ldots,1)}$ symmetrically log canonical, and $n \geq 12$, $g$ even, there is no triple $(c,b_2,b_{g+1})$ making $\mathbb{D}^{\sL_2}_{1,(1,\ldots,1)}$ symmetrically log canonical.  Similarly, it is easy to show that for $n \geq 16$, there is no triple $(c,b_3,b_g)$ making $\mathbb{D}^{\sL_2}_{g-1,(1,\ldots,1)}$ symmetrically log canonical, and for $n \geq 14$, there is no triple $(c,b_2,b_{g+1})$ making $\mathbb{D}^{\sL_2}_{g,(1,\ldots,1)}$ symmetrically log canonical.
\end{proof}

\section{Morphisms defined by the divisors $\Dl$}\label{morphisms}
In this section we consider the morphisms defined by the extremal divisors
and by divisors that lie on some faces spanned by them.


\subsection{Levels 1 and 2 and extended Torelli maps}
Let $h: \ov{M}_{0,2(g+1)} \longrightarrow \ov{M}_{g}$ be the morphism defined by taking $(C,\vec{p})\in \ov{M}_{0,2(g+1)}$
to the stable curve of genus $g$ obtained by taking a double cover of $C$ branched at the set marked points $\vec{p}=\{p_1,\ldots,p_n\}$.  In this section we will show that the divisor $\mathbb{D}^{\sL_2}_{1,(1,\ldots,1)}$ defines a morphism that factors through $h$.

The following formula for the pullback of a divisor on $\ov{M}_g$ along the map $h$ will be useful:

\begin{lemma}\label{hpullback}
Let $h: \ov{M}_{0,2(g+1)} \longrightarrow \ov{H}_g \subset \ov{M}_{g}$
 be the isomorphism onto the hyperelliptic locus in $\ov{M}_{g}$, and let $D=a\lambda-\sum_{i=0}^{\lfloor \frac{g}{2} \rfloor}b_i\delta_i$ be a divisor on $\ov{M}_{g}$.  Then
 $$h^*(D)=\sum_{\stackrel{2 \le k \le \lfloor \frac{g}{2} \rfloor}{\text{k even}}}(\frac{ak(n-k)}{8(n-1)}-2b_0)B_k
 +\sum_{\stackrel{2 \le k \le \lfloor \frac{g}{2} \rfloor}{\text{k odd}}}(\frac{a(k-1)(n-k-1)}{8(n-1)}-\frac{b_i}{2})B_k.$$
\end{lemma}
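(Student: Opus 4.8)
The plan is to use linearity of $h^{*}$ to reduce the computation to the three pullbacks $h^{*}\lambda$ and $h^{*}\delta_{i}$ (for $0\le i\le\lfloor g/2\rfloor$), and then to reassemble $h^{*}(a\lambda-\sum_i b_i\delta_i)$ by collecting the coefficient of each symmetric boundary class $B_k$. Since $h$ is an isomorphism onto the hyperelliptic locus, every class pulls back to a symmetric class on $\ov{M}_{0,n}$ with $n=2(g+1)$, so by Corollary~\ref{abasis} it is determined by its intersection numbers with the basis of curves $\{F_{1,1,i}:1\le i\le g\}$; this gives a convenient way to pin down and to check all coefficients.

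First I would compute the boundary pullbacks geometrically from the double-cover description of $h$. A boundary divisor $\Delta_J\subset\ov{M}_{0,n}$ with $|J|=k$ parametrizes configurations in which the base $\Pro^{1}$ acquires a node separating the $2g+2$ branch points into sets of sizes $k$ and $n-k$. Applying Riemann--Hurwitz to the double cover over each side of the node, I would record the dichotomy: when $k$ is even the cover is unramified over the node and one obtains a pair of conjugate nonseparating nodes, so $\Delta_J$ maps into $\Delta_0$; when $k$ is odd the node becomes a branch point, producing a single separating node that splits off a component of genus $(k-1)/2$, so $\Delta_J$ maps into $\Delta_{(k-1)/2}$. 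Tracking the ramification of $h$ along these divisors (the factor of $2$, respectively $\tfrac12$, coming from the generic $\Z/2$ automorphism of the hyperelliptic locus) I expect to obtain
\[
h^{*}\delta_0=2\sum_{k\ \mathrm{even}}B_k,\qquad h^{*}\delta_i=\tfrac12\,B_{2i+1}\ \ (i\ge1),
\]
which already accounts for the $-2b_0$ appearing in the even-$k$ coefficients and the $-b_i/2$ appearing in the odd-$k=2i+1$ coefficients of the claimed formula.

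The main work, and the main obstacle, is the computation of $h^{*}\lambda$, namely the class of the Hodge bundle of the family of double covers. I would obtain it by applying Grothendieck--Riemann--Roch to the pushforward of the relative dualizing sheaf of the universal double cover and decomposing into the eigenspaces of the hyperelliptic involution, the anti-invariant part being the Hodge bundle; alternatively one may invoke the standard expression for $\lambda$ on the hyperelliptic locus (see the computations in \cite{ArbarelloCornalbaCombinatorial}). The expected outcome is the purely boundary expression
\[
h^{*}\lambda=\sum_{k\ \mathrm{even}}\frac{k(n-k)}{8(n-1)}\,B_k+\sum_{k\ \mathrm{odd}}\frac{(k-1)(n-k-1)}{8(n-1)}\,B_k,
\]
and the delicate points are correctly handling the stacky factors of $2$ and the boundary corrections so that the denominator $8(n-1)=8(2g+1)$ and the even/odd splitting come out right.

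Finally I would assemble $h^{*}D=a\,h^{*}\lambda-\sum_i b_i\,h^{*}\delta_i$ and read off the coefficient of each $B_k$, obtaining the stated formula. As an independent check I would intersect both the claimed class and $h^{*}D$ with each curve $F_{1,1,i}$: the claimed class is evaluated using the identity $\sum_k c_kB_k\cdot F_{1,1,i}=-c_{i+2}-c_i+c_2+2c_{i+1}$ from \cite{KM}, while $h^{*}D\cdot F_{1,1,i}=D\cdot h_{*}F_{1,1,i}$ is computed from the numerical invariants of the hyperelliptic test family $h_{*}F_{1,1,i}$; matching these for $1\le i\le g$ confirms the formula by Corollary~\ref{abasis}.
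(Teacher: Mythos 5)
Your proposal is correct and, in substance, coincides with the paper's proof: the paper disposes of this lemma with a one-line citation to \cite{CornalbaHarris}*{p.~468--470 and Prop.~4.7}, and those pages contain exactly the admissible-cover analysis of the boundary (even $k$ giving two unramified nodes over $\Delta_0$ with multiplicity $2$, odd $k$ giving a ramified separating node with multiplicity $\tfrac12$) and the Grothendieck--Riemann--Roch computation of $\lambda$ on the hyperelliptic locus that you outline. Your intermediate formulas for $h^*\lambda$, $h^*\delta_0$, and $h^*\delta_i$ are the correct ones and assemble to the stated class, so the only difference is that you rederive what the paper cites.
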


\begin{proof}
This follows from \cite{CornalbaHarris}*{p. 468-470 and Prop. 4.7}.
\end{proof}

The classical Torelli map 
$M_g \overset{t}{\longrightarrow} A_g,$
which takes a smooth curve $X$ of genus $g$ to its Jacobian,
extends to a regular map
   $$t^{\text{Sat}}:\ov{M}_{g} \longrightarrow \ov{A}_g^{\text{Sat}},$$
   where $\ov{A}_g^{\text{Sat}}$ is the Satake compactification of the moduli space $A_g$. 
   This morphism $t^{\text{Sat}}$ is given by the divisor $\lambda$ .
   In other words, $\lambda=(t^{\text{Sat}})^*(\Theta),$ where
   $\Theta$ is the theta  divisor on $\ov{A}_g^{\text{Sat}}$ \cite{Arakelov}.
\begin{theorem}\label{Satake}
The divisor $\mathbb{D}^{\sL_2}_{1,(1,\ldots,1)}$ defines the composition

$$\ov{M}_{0,2g+2}/S_{2g+2}\overset{h}{\longrightarrow} \ov{M}_{g}  \overset{\overline{t}^{Sat}}{\longrightarrow} \ov{A}_g^{Sat}.$$
\end{theorem}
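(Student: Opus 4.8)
The plan is to show that the class $\mathbb{D}^{\sL_2}_{1,(1,\ldots,1)}$ is a positive rational multiple of the pullback $h^*\lambda$, and then to invoke the principle that proportional semiample classes define the same morphism. Since the extended Torelli map is given by $\lambda = (\overline{t}^{\,\mathrm{Sat}})^*(\Theta)$ with $\Theta$ ample on $\ov{A}_g^{\mathrm{Sat}}$, the composite $\overline{t}^{\,\mathrm{Sat}}\circ h$ is precisely the morphism associated to the class $(\overline{t}^{\,\mathrm{Sat}}\circ h)^*(\Theta)=h^*\lambda$. Thus everything reduces to an identity between $h^*\lambda$ and $\mathbb{D}^{\sL_2}_{1,(1,\ldots,1)}$ in $\Pic(\ov{M}_{0,2g+2}/S_{2g+2})_{\Q}$.

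First I would compute $h^*\lambda$ by applying Lemma \ref{hpullback} to $D=\lambda$, that is, with $a=1$ and all $b_i=0$. This gives
$$h^*\lambda = \sum_{2\le k\le g+1,\ k\ \mathrm{even}} \frac{k(n-k)}{8(n-1)}\,B_k \;+\; \sum_{2\le k\le g+1,\ k\ \mathrm{odd}} \frac{(k-1)(n-k-1)}{8(n-1)}\,B_k,$$
where $n=2(g+1)$. Comparing these coefficients term by term with the expression for $\mathbb{D}^{\sL_2}_{1,(1,\ldots,1)}$ recorded in Theorem \ref{extremaldivisors} — whose $B_k$-coefficients are $\frac{k(n-k)}{4(n-1)}$ and $\frac{(k-1)(n-k-1)}{4(n-1)}$, exactly twice the ones above — I read off, using that $\{B_k : 2\le k\le g+1\}$ is a basis, the identity
$$\mathbb{D}^{\sL_2}_{1,(1,\ldots,1)} = 2\,h^*\lambda.$$

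Both classes are semiample: $\mathbb{D}^{\sL_2}_{1,(1,\ldots,1)}$ as a conformal blocks class, and $h^*\lambda$ as the pullback of the ample class $\Theta$ along $\overline{t}^{\,\mathrm{Sat}}\circ h$. Two semiample classes differing by a positive rational scalar have proportional section rings (their Veronese subrings agree) and hence define the same morphism; combined with the previous paragraph this shows that $\mathbb{D}^{\sL_2}_{1,(1,\ldots,1)}$ defines $\overline{t}^{\,\mathrm{Sat}}\circ h$, as asserted.

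I expect the main obstacle to be the coefficient bookkeeping in the second paragraph: one must match every basis class $B_k$ for $2\le k\le g+1$, keeping track of the parity-dependent shape of the coefficients in Lemma \ref{hpullback} (the even-$k$ terms encoding contributions from the nonseparating boundary $\delta_0$, the odd-$k$ terms from a separating $\delta_{(k-1)/2}$), and in particular reconciling the index ranges appearing in Lemma \ref{hpullback} with the full range $2\le k\le g+1$ used in Theorem \ref{extremaldivisors}. A secondary point needing care is the morphism-level conclusion: the morphism attached to $(\overline{t}^{\,\mathrm{Sat}}\circ h)^*\Theta$ is a priori the Stein factorization of $\overline{t}^{\,\mathrm{Sat}}\circ h$, so one should note that $h$ is an isomorphism onto $\ov{H}_g$ and that $\Theta$ is ample on $\ov{A}_g^{\mathrm{Sat}}$ in order to identify this with the composite itself.
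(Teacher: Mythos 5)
Your proposal is correct and follows essentially the same route as the paper's proof: apply Lemma \ref{hpullback} with $a=1$, $b_i=0$, compare coefficients against Theorem \ref{extremaldivisors} to get $\mathbb{D}^{\sL_2}_{1,(1,\ldots,1)}=2\,h^*(\lambda)$, and conclude via the fact that $\lambda$ defines the extended Torelli morphism. Your added remarks on Stein factorization and the index bookkeeping are sensible elaborations of points the paper leaves implicit.
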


\begin{proof}
To prove this, we use Lemma \ref{hpullback} and Theorem \ref{extremaldivisors} to show
$$\mathbb{D}^{\sL_2}_{1,(1,\ldots,1)}=2 h^*(\lambda).$$
Because $\lambda$ is the semiample divisor that defines the morphism from $\ov{M}_{g}$ to $\overset{\overline{t}^{Sat}}{\longrightarrow} \ov{A}_g^{Sat}$, the result follows.
\end{proof}

It is natural to wonder whether any of the other divisors $\Dl$ define morphisms that factor through $h$.  We present some evidence which suggests this may be true for 
$\mathbb{D}^{\sL_2}_{2,(1,\ldots,1)}$.

\begin{theorem}\label{Hyperelliptic}
Write
$$h: \ov{M}_{0,2g+2}/S_{2g+2} \rightarrow  \ov{H}_g \hookrightarrow \ov{M}_{g}.$$
Then
$$\mathbb{D}^{\sL_2}_{2,(1,\ldots,1)}=\frac{1}{2} h^*(12\lambda - \delta_0).$$
\end{theorem}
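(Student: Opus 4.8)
The plan is to mirror the proof of Theorem \ref{Satake}: both sides of the asserted identity live in $\Pic(\ov{M}_{0,2g+2}/S_{2g+2})\otimes\Q$, and by Corollary \ref{abasis} the space has dimension $g$ with the boundary generators $B_k$ ($2\le k\le g+1$) as a basis (dually, the F-curves $\{F_{1,1,i}\}$ as a basis of $N_1$). It therefore suffices to expand each side in one of these bases and match. The divisor $12\lambda-\delta_0$ on $\ov{M}_g$ is exactly of the shape $a\lambda-\sum_i b_i\delta_i$ occurring in Lemma \ref{hpullback}, with $a=12$, $b_0=1$, and $b_i=0$ for $i\ge 1$, so the whole computation is set up to go through that lemma.

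First I would compute the right-hand side by substituting $a=12$, $b_0=1$ into Lemma \ref{hpullback}; this writes $h^*(12\lambda-\delta_0)$ explicitly as a $\Q$-combination of the $B_k$, with the coefficient of $B_k$ governed by one formula for $k$ even and a different one for $k$ odd, the $\delta_0$-contribution entering with different weights in the two parities. Halving gives a closed form for $\tfrac12 h^*(12\lambda-\delta_0)$. Next I would read off the class of $\mathbb{D}^{\sL_2}_{2,(1,\ldots,1)}$ from Theorem \ref{extremaldivisors}, whose even/odd $B_k$ coefficients are already recorded, and compare the two expressions coefficient by coefficient, handling the even-$k$ and odd-$k$ cases separately. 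As an independent and arguably cleaner check I would instead intersect both sides with the curve basis $\{F_{1,1,i}:1\le i\le g\}$ of Corollary \ref{abasis}: the left-hand intersections are already computed in Corollary \ref{2}, while by the projection formula the right-hand ones reduce to $\tfrac12\bigl(12\,\lambda\cdot h_*F_{1,1,i}-\delta_0\cdot h_*F_{1,1,i}\bigr)$, and agreement on a basis of $N_1$ forces the divisor identity.

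The main obstacle is controlling $h^*\delta_0$, that is, understanding precisely how the boundary of the hyperelliptic locus $\ov{H}_g\cong\ov{M}_{0,2g+2}/S_{2g+2}$ lies over the non-separating boundary $\Delta_0\subset\ov{M}_g$. This is the Cornalba--Harris content packaged in Lemma \ref{hpullback}, and the delicate point is the parity bookkeeping: a collision of an even versus an odd number of branch points produces a separating versus a non-separating node of the double cover, so $\delta_0$ and the $\delta_i$ for $i\ge 1$ distribute across the $B_k$ according to the parity of $k$. Once that parity-dependent pullback is in hand, what remains is the routine arithmetic of matching the two parity-split coefficient formulas (equivalently, of checking the two F-curve intersection vectors coincide); here one must only take care to carry the normalizing constants---the factor $12$, the $-\delta_0$, and the overall $\tfrac12$---correctly through both the even and odd cases.
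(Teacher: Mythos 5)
Your proposal is correct and matches the paper's own argument, which is exactly the one-line "Use Lemma \ref{hpullback} and Theorem \ref{extremaldivisors}": substitute $a=12$, $b_0=1$, $b_i=0$ ($i\ge 1$) into the pullback formula and compare coefficients of the $B_k$ with the class from Theorem \ref{extremaldivisors}. Your supplementary check against the F-curve basis is a harmless extra, but the core route is identical to the paper's.
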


\begin{proof}
Use Lemma \ref{hpullback} and Theorem \ref{extremaldivisors}.
\end{proof} 

Since $\mathbb{D}^{\sL_2}_{2,(1,\ldots,1)}$ is a conformal blocks divisor (or, alternatively, because it is symmetrically log canonical), we know that $h^*(12\lambda - \delta_0)$ is semiample on $\ov{M}_{0,n}$.  However, we do not know whether $12\lambda - \delta_0$ is semiample on $\ov{M}_{g}$ in general:

\begin{question}\label{unknown} Is  $12\lambda - \delta_0$ semiample on $\ov{M}_{g}$ for all $g \geq 4$?  
\end{question}

It is known that $12\lambda - \delta_0$ is nef \cite{FaberThesis}*{Prop. 3.3}.  Also, Rulla shows that $12\lambda - \delta_0$ is base-point free
for $g=3$ \cite{Rulla}*{Prop. 2.3.6}.  If $12\lambda - \delta_0$ is base-point free for all $g$, then Theorem \ref{Hyperelliptic} shows that the morphism given by $\mathbb{D}^{\sL_2}_{2,(1,\ldots,1)}$ factors through the hyperelliptic locus, as in the case for $g=3$.

Let $X$ denote the image of the linear system $|12 \lambda - \delta_{0}|$.  Even for $g=3$, where we know this divisor defines a morphism, we don't know a modular interpretation or classical description of $X$.  As one can see in \cite{Rulla}*{Fig. 2.8}, $X$ corresponds to a wall in the effective cone of $\ov{M}_3$ that lies between the full dimensional chambers that correspond to $\ov{A}_{3}^{\operatorname{Vor}(2)}$ and $\ov{M}_{3}^{ps}$.  It seems a reasonable guess that there might be morphisms from $\vortwo$ and $\ov{M}_{g}^{ps}$
to $X$ that are small modifications.  Here $\ov{M}_{g}^{ps}$ stands for the moduli space of pseudostable curves
(cf. \cite{Schubert}, \cite{HyeonLee3}, \cite{HassettHyeonDivCon}).

\subsection{Levels $g$ and $g-1$ and points on a line}

In \cite{simpsonlogcanonical}, Matthew Simpson  identified certain log canonical models of $\ov{M}_{0,n}$ as Hassett weighted spaces $\ov{M}_{0,A}$ (\cite{HassettWeighted}).  This result has been extended and reproved several times (\cite{AlexeevSwinarski},\cite{FedorchukSmyth}, \cite{KiemMoon}).  We use the notation of Kiem and Moon \cite{KiemMoon}.  

Let $(\mathbb{P}^1)^n \dblq \SL(2)$ denote the GIT quotient with the symmetric linearization.

\begin{theorem}\label{GIT}  The extremal divisor $\mathbb{D}^{\sL_2}_{g,(1,\ldots,1)}$ defines a 
sequence of contractions through Hassett's moduli spaces $\ov{M}_{0,n\cdot \epsilon}$
of weighted pointed stable curves with symmetric weights $n\cdot \epsilon=\{\epsilon,\ldots,\epsilon\}$.
Namely, for $n=2g+2$

$$\ov{M}_{0,n}=\ov{M}_{0,n\cdot \epsilon_{g-1}} \rightarrow \ov{M}_{0,n\cdot \epsilon_{g-2}}
\rightarrow \cdots \rightarrow \ov{M}_{0,n\cdot \epsilon_{1}} \rightarrow
(\mathbb{P}^1)^n \dblq \SL(2),$$
where $\frac{1}{g+2-k} < \epsilon_k < \frac{1}{g+1-k}$.
\end{theorem}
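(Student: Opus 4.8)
The plan is to identify the morphism defined by $\mathbb{D}^{\sL_2}_{g,(1,\ldots,1)}$ with the natural contraction $\pi\colon \ov{M}_{0,n}\to(\mathbb{P}^1)^n\dblq\SL_2$ onto the GIT quotient, and then to recall from \cite{simpsonlogcanonical}, \cite{AlexeevSwinarski}, \cite{FedorchukSmyth}, and \cite{KiemMoon} that this contraction factors through the stated chain of Hassett reductions. Since $\mathbb{D}^{\sL_2}_{g,(1,\ldots,1)}$ is semiample (it is a conformal blocks divisor) and spans an extremal ray of $\Nef(\ov{M}_{0,n}/S_n)$ by Section \ref{extremal}, it is enough to show that its associated contraction and $\pi$ coincide. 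For this I would show that the nef class $\pi^{*}L$, where $L$ is the ample generator on the GIT quotient, lies on the same extremal ray, since two proportional semiample classes share a Veronese and hence define the same morphism onto the same normal projective variety.

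First I would pin down which curves each morphism contracts. By Corollary \ref{g}, the divisor $\mathbb{D}^{\sL_2}_{g,(1,\ldots,1)}$ meets the basis curves $F_{1,1,i}$, $1\le i\le g$, in degree $0$ for $i\le g-1$ and in degree $1$ for $i=g$; thus its contraction collapses exactly the $g-1$ curves $F_{1,1,i}$ with $i\le g-1$, which are independent by Corollary \ref{abasis} and whose common annihilator in $N^{1}(\ov{M}_{0,n}/S_n)$ is one-dimensional and contains $\mathbb{D}^{\sL_2}_{g,(1,\ldots,1)}$. I would then check that $\pi$ contracts these same curves. The curve $F_{1,1,i}$ varies the cross-ratio of four clusters of marked points of sizes $1,1,i,n-2-i$; for $i\le g-1$ the largest cluster has size $n-2-i\ge g+1=n/2$, so along the entire family the corresponding point configuration on $\mathbb{P}^1$ is GIT-unstable (or, when $i=g-1$, strictly semistable), and its image in $(\mathbb{P}^1)^n\dblq\SL_2$ is a single point; whereas for $i=g$ all four clusters have size $\le g<n/2$, so the family maps nonconstantly. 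Hence $\pi^{*}L$ vanishes on precisely the $g-1$ independent curves cutting out the ray spanned by $\mathbb{D}^{\sL_2}_{g,(1,\ldots,1)}$; being a nonzero nef class in their one-dimensional annihilator, $\pi^{*}L$ is a positive multiple of $\mathbb{D}^{\sL_2}_{g,(1,\ldots,1)}$, so the latter defines $\pi$.

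It remains to exhibit the factorization through the Hassett spaces $\ov{M}_{0,n\cdot\epsilon_k}$. This is the content of the variation-of-log-canonical-models picture: as the symmetric weight $\epsilon$ decreases past each critical value $\tfrac{1}{g+1-k}$, one additional marked point is permitted to collide, producing the reduction morphism, and crossing the final wall near $\epsilon=\tfrac{1}{g+1}$ (where $n/2$ points may collide and the strictly semistable locus appears) produces the map to $(\mathbb{P}^1)^n\dblq\SL_2$. I would cite \cite{simpsonlogcanonical} and \cite{KiemMoon} for the identification of the intermediate models with the weighted spaces $\ov{M}_{0,n\cdot\epsilon_k}$ and for the terminal GIT quotient, together with \cite{Fakh}*{Prop. 4.7} to record that the lower-level divisors $\mathbb{D}^{\sL_2}_{\ell,(1,\ldots,1)}$, $\ell\le g-1$, already factor through the corresponding reductions $\rho_A$ with $A=(\tfrac{1}{\ell+1},\ldots,\tfrac{1}{\ell+1})$, which sit as the earlier stages of the same chain.

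The main obstacle is the second paragraph: rigorously establishing that $\pi$ contracts precisely the curves $F_{1,1,i}$ with $i\le g-1$, and in particular handling the boundary case $i=g-1$, where the configurations are strictly semistable and one must argue that $S$-equivalence still collapses the family to a point. A secondary point demanding care is the bookkeeping between $\ov{M}_{0,n}$ and its quotient by $S_n$, and the normality of the image, so that proportionality of semiample classes genuinely produces the same morphism onto $(\mathbb{P}^1)^n\dblq\SL_2$.
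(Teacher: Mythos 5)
Your proposal is correct and follows essentially the same route as the paper: the paper's proof is a one-line comparison of the intersection numbers in Corollary \ref{g} with those of the pullback of the distinguished ample line bundle on the GIT quotient recorded in \cite{AlexeevSwinarski}, which gives proportionality of the two classes on the basis $\{F_{1,1,i}\}$ and then invokes the known chain of Hassett reductions. The only real difference is that where the paper cites \cite{AlexeevSwinarski} for the intersection numbers of the pulled-back polarization, you rederive their vanishing pattern by a direct GIT-stability analysis of the four clusters --- which, as you rightly flag, must be routed through $\ov{M}_{0,n\cdot\epsilon_1}$ to make sense of the unstable configurations when $i<g-1$ and of $S$-equivalence when $i=g-1$.
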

\begin{proof}
Compare the intersection numbers of $\mathbb{D}^{\sL_2}_{g,(1,\ldots,1)}$ given in Cor. \ref{g} with those for the pullback of the distinguished ample line bundle on the GIT quotient given in  \cite{AlexeevSwinarski}.
\end{proof}

We have $A=n \cdot \epsilon_1 = (\frac{2}{n-2}, \ldots, \frac{2}{n-2})$.  Kiem and Moon identify $\ov{M}_{0,n\cdot \epsilon_{1}}$ as Kirwan's partial desingularization of $(\mathbb{P}^1)^n \dblq SL(2)$.  They give a description of $\Pic( \ov{M}_{0,n\cdot \epsilon_{k}}  )$ in \cite{KiemMoon}*{Thm 6.1.2}, from which it follows that $\dim \NS( \ov{M}_{0,n\cdot \epsilon_{1}} / S_{n} ) =2$.  Alternatively, one can easily check the following claim directly:

\begin{claim}
Let $\rho_{n \cdot \epsilon_1}: \ov{M}_{0,n} \rightarrow \ov{M}_{0,n\cdot \epsilon_{1}}$ be the birational contraction defined by Hassett.
\begin{enumerate}
\item $\rho_{n \cdot \epsilon_1}$ contracts $\Delta_{I}$ if $|I| \neq 2, \frac{n}{2}$.
\item $\rho_{n \cdot \epsilon_1}$ contracts $F_{k,1,1}$ if $k \leq g-2$.  
\end{enumerate}
\end{claim}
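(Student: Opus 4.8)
The plan is to unwind the definition of Hassett's reduction morphism $\rho_{n\cdot\epsilon_1}$ for the specific weight datum $A = n\cdot\epsilon_1 = (\frac1g,\ldots,\frac1g)$ --- note that $\epsilon_1 = \frac{2}{n-2} = \frac1g$ since $n = 2g+2$ --- and to track, component by component, what $\rho_A$ does to the generic point of each relevant stratum. Recall that under $A$-stability a collection of $m$ marked points may coincide precisely when $\frac{m}{g}\le 1$, i.e.\ when $m\le g$, and that a genus-$0$ component carrying $m$ markings and $s$ nodes is $A$-stable iff $\frac{m}{g}+s>2$. The reduction morphism collapses exactly those subcurves that become unstable after lowering the weights to $A$, contracting each such subcurve to the point where it meets the rest of the curve; this is the only mechanism I will need.

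For part (1), I would analyze the generic element of $\Delta_I$, a two-component nodal curve with $|I|$ markings on one side and $n-|I|$ on the other; by the symmetry $\Delta_I=\Delta_{I^c}$ we may assume $2\le|I|\le n/2=g+1$. The $I$-component carries $|I|$ markings and one node, so it is $A$-stable iff $\frac{|I|}{g}+1>2$, i.e.\ iff $|I|>g$. Hence for $|I|=g+1$ both components survive and $\Delta_I$ maps to a boundary divisor, while for $|I|\le g$ the $I$-component collapses and $\rho_A$ forgets its moduli, a copy of $\ov{M}_{0,|I|+1}$ of dimension $|I|-2$. This collapse is a genuine contraction exactly when $|I|-2>0$, i.e.\ $|I|\ge 3$; when $|I|=2$ the $I$-component is rigid and $\rho_A$ merely maps $\Delta_I$ generically finitely onto the divisor $\{p_i=p_j\}$. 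Thus $\Delta_I$ is contracted iff $3\le|I|\le g$, which in the range $2\le|I|\le n/2$ is precisely the condition $|I|\ne 2,\tfrac n2$.

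For part (2), I would realize $F_{k,1,1}=F_{k,1,1,n-k-2}$ as the family swept out by varying the cross-ratio of a central $\Pro^1$ carrying two marked points (from the two singleton blocks) and two nodes attaching a rigid $k$-leg and a rigid $(n-k-2)$-leg. Applying $\rho_A$: since $k\le g-2\le g$ the $k$-leg collapses, its markings coinciding at the attaching node, whereas the $(n-k-2)$-leg, carrying $n-k-2\ge g+2>g$ markings, survives. The key computation is the stability of the central component after this collapse: it carries the two singleton markings together with the $k$ now-coincident markings, for total marking weight $\frac{k+2}{g}$, plus a single node to the surviving leg, so it is $A$-stable iff $\frac{k+2}{g}+1>2$, i.e.\ iff $k>g-2$. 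For $k\le g-2$ the central component is therefore unstable and is contracted; since the modulus of the F-curve is exactly the cross-ratio of this central component and the legs are fixed, the entire image is independent of that cross-ratio and $F_{k,1,1}$ is collapsed to a point.

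The main obstacle is bookkeeping rather than conceptual: one must verify that when a leg collapses, the induced coincident marked points and the altered node-count are transported correctly onto the adjacent surviving component, so that the stability inequality for the central component is applied to the right weighted datum. A secondary subtlety is making the word ``contracts'' precise --- for $\Delta_I$ one must distinguish a genuine positive-dimensional contraction (when $|I|\ge 3$) from a generically finite map onto a divisor (when $|I|=2$) --- and checking that the boundary cases, such as $g=2$ where the asserted range $3\le|I|\le g$ is empty, degenerate consistently with the statement.
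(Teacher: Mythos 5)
Your proof is correct. The paper does not actually supply an argument here --- it says only that ``one can easily check the following claim directly'' --- and your component-by-component analysis of $A$-stability for $A=(\tfrac1g,\ldots,\tfrac1g)$ (collapsing legs with at most $g$ markings, then testing the central component via $\tfrac{k+2}{g}+1>2$) is exactly the direct check being alluded to, carried out completely and with the right boundary cases ($|I|=2$, $|I|=n/2$, and $k=g-1,g$, consistent with Theorem \ref{Dg-1morphism}).
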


By Theorem \ref{GIT} above, we know that the morphism associated to $\mathbb{D}^{\sL_2}_{g,(1,\ldots,1)}$ factors through $\ov{M}_{0,n\cdot \epsilon_{1}}$, and by \cite{Fakh}*{Prop. 4.7}, we know that the morphism associated to $\mathbb{D}^{\sL_2}_{g-1,(1,\ldots,1)}$ also factors through $\ov{M}_{0,n\cdot \epsilon_{1}}$.  We combine this with the intersection numbers given in Corollaries \ref{g-1} and \ref{g} to obtain the following result: 
\begin{theorem} \label{Dg-1morphism}
Let $Y_{g-1}$ denote the image of the morphism associated to the divisor $\mathbb{D}^{\sL_2}_{g-1,(1,\ldots,1)}$.  Then $Y_{g-1}$ is the (generalized) flip of $(\mathbb{P}^1)^n \dblq SL(2) $ which contracts the curve $F_{g,1,1}$ in $\ov{M}_{0,n\cdot \epsilon_{1}}$:
\begin{displaymath}
\xymatrix{   &  \ov{M}_{0,n} \ar[d]^{\rho_{n \cdot \epsilon_1}} & \\
& \ov{M}_{0,n\cdot \epsilon_{1}}  \ar[dl]^{\mbox{\tiny contract $F_{g-1,1,1}$}}  \ar[dr]^{\mbox{\tiny contract $F_{g,1,1}$}} & \\
(\mathbb{P}^1)^n \dblq \SL(2) \ar@{-->}[rr] && Y_{g-1}
}
\end{displaymath}
\end{theorem}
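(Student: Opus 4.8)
The plan is to construct the diagram by identifying $Y_{g-1}$ explicitly as a birational modification of the intermediate Hassett space $\ov{M}_{0,n\cdot\epsilon_1}$, and then verifying that the two downward maps are exactly the advertised contractions. First I would record what is already in hand: by \cite{Fakh}*{Prop. 4.7} the morphism associated to $\mathbb{D}^{\sL_2}_{g-1,(1,\ldots,1)}$ factors through $\rho_{n\cdot\epsilon_1}:\ov{M}_{0,n}\to \ov{M}_{0,n\cdot\epsilon_1}$, so it suffices to analyze the induced morphism out of $\ov{M}_{0,n\cdot\epsilon_1}$; call its image $Y_{g-1}$. Since $\dim\NS(\ov{M}_{0,n\cdot\epsilon_1}/S_n)=2$ (as noted above, either from \cite{KiemMoon}*{Thm 6.1.2} or by the direct computation in the Claim), both $(\mathbb{P}^1)^n\dblq\SL(2)$ and $Y_{g-1}$ are quotients of $\ov{M}_{0,n\cdot\epsilon_1}$ of Picard number one, so each downward map is the contraction of a single extremal ray.

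Next I would pin down which ray each map contracts by intersecting against the basis $\{F_{1,1,i}:1\le i\le g\}$ and pushing to $\ov{M}_{0,n\cdot\epsilon_1}$. The Claim tells us $\rho_{n\cdot\epsilon_1}$ already contracts every $F_{k,1,1}$ with $k\le g-2$, so on $\ov{M}_{0,n\cdot\epsilon_1}$ only the two classes coming from $F_{g-1,1,1}$ and $F_{g,1,1}$ survive, spanning its two-dimensional space of curves. Now Corollary \ref{g} gives $\mathbb{D}^{\sL_2}_{g,(1,\ldots,1)}\cdot F_{g-1,1,1}=0$ while $\mathbb{D}^{\sL_2}_{g,(1,\ldots,1)}\cdot F_{g,1,1}=1$, so by Theorem \ref{GIT} the map to $(\mathbb{P}^1)^n\dblq\SL(2)$ is precisely the contraction of the ray spanned by the image of $F_{g-1,1,1}$. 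Dually, Corollary \ref{g-1} gives $\mathbb{D}^{\sL_2}_{g-1,(1,\ldots,1)}\cdot F_{g,1,1}=0$ and $\mathbb{D}^{\sL_2}_{g-1,(1,\ldots,1)}\cdot F_{g-1,1,1}=g-1\neq 0$, so the map to $Y_{g-1}$ contracts the complementary ray, the one spanned by the image of $F_{g,1,1}$. This is exactly the labeling in the diagram, and it shows the two divisor classes lie on opposite sides of the wall spanned by $\rho_{n\cdot\epsilon_1}^*\NS(\ov{M}_{0,n\cdot\epsilon_1}/S_n)$ inside $\Nef(\ov{M}_{0,n}/S_n)$, which is precisely the content of calling $Y_{g-1}$ a (generalized) flip of the GIT quotient.

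Having matched the contracted curve classes, I would assemble the diagram: both $(\mathbb{P}^1)^n\dblq\SL(2)$ and $Y_{g-1}$ receive morphisms from $\ov{M}_{0,n\cdot\epsilon_1}$, and since they contract complementary extremal rays, the rational map between them is the flip (or flip-type small modification) indicated by the dashed horizontal arrow; its existence as a birational map is automatic from the two morphisms sharing the common source of Picard number two. The main obstacle I expect is the geometric justification that $Y_{g-1}$ is genuinely a small modification rather than a divisorial contraction of $(\mathbb{P}^1)^n\dblq\SL(2)$ — that is, that the contracted loci $F_{g-1,1,1}$ and $F_{g,1,1}$ are of small (codimension $\ge 2$) type so the modification deserves the name ``flip.'' This should follow from examining the Hassett wall-crossing geometry at the weight $A=n\cdot\epsilon_1=(\tfrac{2}{n-2},\ldots,\tfrac{2}{n-2})$, where Kiem and Moon identify $\ov{M}_{0,n\cdot\epsilon_1}$ as Kirwan's partial desingularization of the GIT quotient, so that both the GIT quotient and $Y_{g-1}$ arise by contracting the exceptional loci of that desingularization in the two available directions; the word ``generalized'' in the statement accounts for the possibility that the loci are not of strictly Mori-theoretic flipping type.
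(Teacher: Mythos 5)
Your proposal is correct and follows essentially the same route as the paper: the paper's proof likewise combines the factorization of both morphisms through $\ov{M}_{0,n\cdot \epsilon_{1}}$ (via Theorem \ref{GIT} and \cite{Fakh}*{Prop. 4.7}) with the two-dimensionality of $\NS(\ov{M}_{0,n\cdot \epsilon_{1}}/S_n)$ and the intersection numbers of Corollaries \ref{g-1} and \ref{g} to identify the two complementary contracted rays. Your closing discussion of why the modification deserves to be called a flip is a reasonable elaboration of what the paper leaves implicit in the word ``generalized,'' but it is not a different argument.
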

One rich source of flips of GIT quotients is variation of GIT (\cite{DolgachevHu}, \cite{Thaddeus}).  However, comparing the intersection numbers given in Corollary \ref{g-1} with those given in \cite{AlexeevSwinarski} suggests that the flip described above does not arise by varying the linearization on $(\mathbb{P}^1)^n \dblq \SL(2)$.


 \subsection{Finding nef divisors on $\ov{M}_{2(g+1)}$ using the flag morphism}
Given any point $(E;p) \in \ov{{M}}_{1,1}$,  let $f:\ov{M}_{0,n} \longrightarrow \ov{{M}}_{n}$ be the morphism given taking $(C; p_1,\ldots,p_{n}) \in \ov{M}_{0,n}$ to the curve of genus $n$ obtained by attaching $n$ copies of
 $E$ to $C$ by identifying $p$ and $p_i$.  We call this the \emph{flag morphism}.  The results of \cite{GKM} establish a close connection between the properties of a divisor $D$ on $\ov{{M}}_{n}$ and its pullback along this morphism $f^{*}D$.  For instance, every nef divisor on $\ov{M}_{0,n}/S_{n}$ is the pullback of a nef divisor on $\ov{{M}}_{n}$ along this map  (\cite{GKM}*{Thm. $0.7$}); thus, it is natural to ask which nef divisors on $\ov{{M}}_{n}$ pull back to the divisors $\{ \Dl : \ell \in \{1,2,g-1,g\}\}$.  But we can obtain slightly stronger results using \cite{GKM}*{Thm. $0.3$}, which says that a divisor $D$ on $\ov{{M}}_{n}$ which intersects all the $\F$-curves (defined below) nonnegatively is nef if and only if the divisor $f^*D$ is nef on $\ov{M}_{0,n}$.  Thus, we can produce nef divisors on $\ov{{M}}_{n}$ by finding $\F$-divisors on $\ov{{M}}_{n}$ which pull back to $\{ \Dl : \ell \in \{1,2,g-1,g\}\}$.  
We carry this out in Proposition \ref{flagpullback}. 

Throughout this paper we have been using $\F$-curves on $\ov{M}_{0,n}$ (see Section \ref{independentcurves} and the references there for a discussion).  One can also define $\F$-curves on $\ov{M}_{g,n}$.  In \cite{GKM} Theorem 2.2 and Figure 2.3, five types of $\F$-curves on $\ov{M}_{g,n}$ are defined and pictured.  We will refer to these as $\F$-curves of types $(1)-(5)$ in the sequel.  For the reader's convenience, we state a combinatorial definition of an $\F$-divisor $D$ on $\ov{M}_{2(g+1)}$  from \cite{GKM} which we rewrite slightly to fit our situation exactly.  
 \begin{theorem}[\cite{GKM}*{Thm. 2.1}]  \label{F} Let $n=2(g+1)$ and consider the divisor $D=a\lambda-\sum_{i=0}^{g+1}b_i \delta_i$ on 
 $\ov{M}_{2(g+1)}$.  Then $D$ is an $\F$-divisor if and only if it satisfies the following inequalities:
 \begin{enumerate}
 \item $a-12b_0+b_1 \ge 0$;
 \item $b_i \ge 0$;
 \item $2b_0-b_i \ge 0$;
 \item $b_i + b_j \ge b_{i+j}$, for all $i,j \ne 0$;
 \item $b_i+b_j+b_k+b_{\ell} \ge b_{i+j}+b_{i+k}+b_{i+\ell}$, for all $i,j,k,\ell \ne 0$, such that $i+j+k+\ell=2(g+1)$.
 \end{enumerate}
 \end{theorem}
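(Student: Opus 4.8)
The plan is to unwind the definition: $D$ is an $\F$-divisor precisely when $D\cdot F\ge 0$ for every $\F$-curve $F$ on $\ov{M}_{2(g+1)}$, so it suffices to compute these intersection numbers for a complete list of $\F$-curves and to check that the resulting inequalities are exactly (1)--(5). By \cite{GKM}*{Thm. 2.2, Fig. 2.3}, applied with no marked points and total genus $n=2(g+1)$, every $\F$-curve is obtained by gluing fixed ``tails'' to a single moving factor isomorphic to either $\ov{M}_{0,4}\cong\Pro^1$ or $\ov{M}_{1,1}$, and these fall into finitely many combinatorial types. First I would fix the two computational principles that drive every case: the Hodge class $\lambda$ restricts to the Hodge class of the moving factor (so $\lambda\cdot F=0$ whenever that factor is rational, and $\lambda\cdot F=\tfrac1{24}$ up to normalization when it is $\ov{M}_{1,1}$), while a boundary class $\delta_h$ meets $F$ with multiplicity recording how many of the boundary points of the moving factor produce a genus-$h$ degeneration, corrected by the self-intersections of the attaching sections (each contributing $-1$).

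With these in hand I would run through the types. The family over $\ov{M}_{1,1}$ glued along its marked point to a fixed genus $n-1$ curve has $\lambda\cdot F=\tfrac1{24}$, $\delta_0\cdot F=\tfrac12$, and $\delta_1\cdot F=-\tfrac1{24}$ (using the relation $12\lambda=\delta_0$ on $\ov{M}_{1,1}$ and that the attaching section is a $(-1)$-curve), whence $D\cdot F=\tfrac1{24}(a-12b_0+b_1)$, and nonnegativity is exactly inequality (1). The family over $\ov{M}_{0,4}$ whose four points carry positive-genus tails of genera $i,j,k,\ell$ with $i+j+k+\ell=n$ has $\lambda\cdot F=0$; it crosses the three boundary strata corresponding to the pairings, contributing $-b_{i+j}-b_{i+k}-b_{i+\ell}$, while the four attaching sections contribute $+b_i+b_j+b_k+b_\ell$, so that $D\cdot F=(b_i+b_j+b_k+b_\ell)-(b_{i+j}+b_{i+k}+b_{i+\ell})$, which is inequality (5). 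The remaining $\ov{M}_{0,4}$-types, in which some of the four points are instead glued to one another (creating nonseparating nodes, so that $\delta_0\cdot F\ne0$), are computed in the same way and yield, respectively, the positivity $b_i\ge0$ of (2), the bound $2b_0-b_i\ge0$ of (3) (the factor $2$ arising from the pair of nodes of the nonseparating loop), and the triangle inequality $b_i+b_j\ge b_{i+j}$ of (4).

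Finally I would assemble the five inequalities, using throughout the conventions that $\delta_h=\delta_{n-h}$ (so indices are read modulo this symmetry, and $b_{i+j}$ means $b_{\min(i+j,\,n-i-j)}$) and that $\F$-curve intersection numbers are taken after clearing the stacky automorphism factors. Since \cite{GKM}*{Thm. 2.2} guarantees that these types exhaust all $\F$-curves, nonnegativity on every $\F$-curve is equivalent to the conjunction of (1)--(5), which is the assertion.

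I expect the main obstacle to be organizational rather than conceptual: correctly bookkeeping the boundary intersections in the nonseparating cases, where the automorphism factors of the moving $\ov{M}_{0,4}$, the multiplicities of the self-glued nodes, and the reduction $\delta_h=\delta_{n-h}$ all interact, and where one must verify that the genus budget of the tails ranges over \emph{all} admissible partitions, so that (2)--(5) are obtained in full generality with no spurious extra constraints.
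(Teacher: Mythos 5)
This statement is not proved in the paper at all: it is quoted verbatim (with minor notational adjustment) from \cite{GKM}*{Thm.\ 2.1}, so there is no internal argument to compare yours against. Your strategy --- unwind the definition of $\F$-divisor, enumerate the $\F$-curve types of \cite{GKM}*{Thm.\ 2.2, Fig.\ 2.3}, compute $D\cdot F$ for each type via the Hodge class of the moving factor and the boundary/self-intersection bookkeeping, and check that the resulting inequalities are exactly (1)--(5) --- is precisely the argument given in the original source, and your computations for types (1) and (5) (the elliptic tail giving $\tfrac{1}{24}(a-12b_0+b_1)$ and the four-positive-genus-tails family giving $(b_i+b_j+b_k+b_\ell)-(b_{i+j}+b_{i+k}+b_{i+\ell})$) are correct.

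The weak point is the compressed treatment of the ``remaining types.'' Your attribution of (2), (3), (4) to configurations ``in which some of the four points are glued to one another'' is not accurate as stated: for instance, $b_i\ge 0$ arises from the family in which \emph{three} of the four points are attached to a single fixed component (at three distinct points) and the fourth carries a tail of genus $i$; here $\delta_0\cdot F=3-3=0$ and only the tail's attaching section survives. Likewise $2b_0-b_i\ge0$ comes from two fixed components each attached at two of the four points (or a self-gluing pair plus a doubly-attached component), where the coefficient $2$ is the net count $2-4=-2$ of nonseparating-node contributions (two degenerate fibers acquiring a new nonseparating node, minus four persistent attaching nodes each contributing $-\psi=-1$), not ``the pair of nodes of the nonseparating loop.'' Meanwhile the configuration with two tails and one self-glued pair, which you seem to have in mind for (3), actually yields an instance of (4) after applying $\delta_h=\delta_{2(g+1)-h}$. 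None of this invalidates the approach, but the if-and-only-if requires you to run through \emph{every} admissible distribution of the four attaching points among self-gluings and connected components of the fixed part (including, e.g., a single component attached at all four points, which gives $b_0\ge0$) and verify that each resulting inequality is one of, or is implied by, (1)--(5); as written, that exhaustive check is asserted rather than carried out.
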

Each of the inequalities $(1)-(5)$ of Theorem \ref{F} is satisfied by a divisor $D$ as long as $D$ nonnegatively intersects 
the corresponding $\F$-curves of types $(1)-(5)$.

In  \cite{GKM}*{Thm. 4.7} a nef divisor which we'll denote $\mathcal{D}$ is defined with the property that $\mathcal{D}$ strictly positively
intersects the $\F$-curve of types $(1)-(4)$ while it intersects the $\F$-curve of types $(5)$ in degree zero.  In particular,
it is shown that $f^*\mathcal{D}$ is trivial.  We will use this divisor $\mathcal{D}$ in Proposition \ref{flagpullback}, and so for
the reader's convenience, we recall its definition.

 \begin{definition}\label{E} On $\ov{M}_{2(g+1)}$ we consider the divisor 
 $$\mathcal{D}=\alpha \lambda - \beta \delta_0-\sum_{i=1}^{g+1} i (2(g+1)-i) \delta_i.$$
 \end{definition}
 \begin{theorem*}[\cite{GKM}*{Thm. 4.7}]  Let $\mathcal{D}$ be the divisor from Definition \ref{E}.  For any choice of $\alpha$ and $\beta$ such that $\alpha > 12 \beta-(2g+1)$, and $2\beta> (g+1)^2$:
 \begin{enumerate}
 \item $\mathcal{D}$ is nef, 
 \item $f^*(\mathcal{D})=0$, and 
 \item $\mathcal{D}$ strictly positively  intersects all the $\operatorname{F}$-curves of type $(1)-(4)$.
\end{enumerate}
 \end{theorem*}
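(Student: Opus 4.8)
The plan is to derive all three conclusions from two ingredients: the combinatorial $\F$-divisor criterion of Theorem \ref{F}, together with the equivalence of \cite{GKM}*{Thm. 0.3} that an $\F$-divisor on $\ov{M}_{2(g+1)}$ is nef if and only if its pullback $f^*\mathcal{D}$ is nef on $\ov{M}_{0,n}$. Set $N = 2(g+1)$, $b_0 = \beta$, and $b_i = i(N-i)$ for $1 \le i \le g+1$; the whole argument rests on the fact that these $b_i$ are the values of the single quadratic $m \mapsto m(N-m)$.

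First I would check that $\mathcal{D}$ satisfies inequalities (1)--(5) of Theorem \ref{F}, with (1)--(4) strict and (5) an equality; the strictness is exactly conclusion (3). Inequality (2) is strict since $b_i > 0$. Inequality (1) reads $\alpha - 12\beta + b_1 = \alpha - 12\beta + (2g+1) > 0$, which is the first hypothesis. Inequality (3) reads $2\beta - i(N-i) \ge 0$; the largest value of $i(N-i)$ on $1 \le i \le g+1$ occurs at $i = g+1$ and equals $(g+1)^2$, so the binding case is $2\beta - (g+1)^2 > 0$, the second hypothesis, and all other $i$ are strictly better. Inequality (4) reduces, after expanding, to $b_i + b_j - b_{i+j} = 2ij > 0$. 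The decisive case is (5): for $i+j+k+\ell = N$ the linear parts cancel because $(i+j)+(i+k)+(i+\ell) = 2i + N$, and the quadratic parts cancel too, so $b_i+b_j+b_k+b_\ell - b_{i+j}-b_{i+k}-b_{i+\ell} = 0$. Hence $\mathcal{D}$ is an $\F$-divisor meeting the type (1)--(4) $\F$-curves strictly positively and the type (5) $\F$-curves in degree zero.

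Next I would prove conclusion (2), $f^*\mathcal{D} = 0$, without writing down the pullback formulas for $f$ term by term. Since $\mathcal{D}$ is symmetric and $f$ attaches the same tail $(E,p)$ at every marked point, $f^*\mathcal{D}$ is $S_n$-invariant, so it suffices to pair it against a basis of $N_1(\ov{M}_{0,n}/S_n)_{\Q}$. By Corollary \ref{abasis} the $\F$-curves $\{F_{1,1,i} : 1 \le i \le g\}$ form such a basis. Under $f$ each $F_{1,1,i}$ pushes forward to an $\F$-curve of type (5) on $\ov{M}_N$ whose four legs carry the elliptic tails of the four cells $(1,1,i,N-i-2)$ of the defining partition, and the intersection of $\mathcal{D}$ with such a curve is precisely the type (5) combination shown above to vanish. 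By the projection formula $f^*\mathcal{D} \cdot F_{1,1,i} = \mathcal{D} \cdot f_* F_{1,1,i} = 0$ for every $i$, and since these curves span $N_1(\ov{M}_{0,n}/S_n)_{\Q}$ we conclude $f^*\mathcal{D} = 0$. Conclusion (1) then follows at once: $\mathcal{D}$ is an $\F$-divisor by the first step, the trivial divisor $f^*\mathcal{D} = 0$ is nef, and \cite{GKM}*{Thm. 0.3} upgrades this to the nefness of $\mathcal{D}$.

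I expect the main obstacle to be the geometric bookkeeping in the middle step: identifying $f_* F_{1,1,i}$ with a genuine type (5) $\F$-curve on $\ov{M}_N$ and confirming that the intersection number it computes is exactly the type (5) combination of the $b_i$. The inequality verifications are routine once the quadratic form of the $b_i$ is exploited, and the final nefness deduction is a one-line application of the cited equivalence. A purely computational alternative would substitute the flag-map formulas $f^*\lambda = 0$, $f^*\delta_0 = 0$ and the boundary pullbacks directly into $\mathcal{D}$, but that requires those formulas as input, so I would keep the projection-formula route as the cleaner main line.
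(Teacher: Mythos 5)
This statement is not proved in the paper at all: it is imported verbatim from \cite{GKM}*{Thm.~4.7} and used as a black box in Proposition \ref{flagpullback}, so there is no internal proof to compare against. Your argument is a correct, self-contained reconstruction. The key observation --- that $b_i = i(N-i)$ with $N=2(g+1)$ is a single quadratic in $i$, so that inequalities (1)--(4) of Theorem \ref{F} are strict (with $b_i+b_j-b_{i+j}=2ij$ and the worst case of (3) being $2\beta>(g+1)^2$ at $i=g+1$) while (5) degenerates to an identity --- is exactly what makes $\mathcal{D}$ an $\F$-divisor that is strictly positive on types (1)--(4) and zero on type (5), and your deduction of nefness from $f^*\mathcal{D}=0$ via \cite{GKM}*{Thm.~0.3} is the intended one. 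Two small remarks. First, your parenthetical justification of the type (5) identity is imprecise: the ``linear parts'' contribute $N^2$ versus $N^2+2iN$ and the ``quadratic parts'' contribute $-\sum m^2$ versus $-\sum m^2-2iN$, so neither piece cancels on its own; only their sum vanishes. The identity itself is correct, but you should state the computation rather than assert termwise cancellation. Second, for conclusion (2) you can bypass the $f_*F_{1,1,i}$ bookkeeping entirely: the paper already quotes the pullback formula $f^*D=\sum_{j=2}^{g+1}\bigl(\tfrac{j(n-j)}{n-1}b_1-b_j\bigr)B_j$ from \cite{GibneyCompositio}*{Lemma 2.4} in the proof of Proposition \ref{flagpullback}, and with $b_1=n-1$ and $b_j=j(n-j)$ every coefficient is identically zero. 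Your projection-formula route is valid (and does correctly identify $f_*F_{1,1,i}$ as a type (5) curve with genus partition $1+1+i+(N-i-2)$), but it re-proves a formula you could simply cite.
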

 
 \begin{proposition}\label{flagpullback}For $\ell \in \{1,2,g-1,g\}$,  there is a positive constant $c_{\ell}$ and nonnegative
 constant $d_{\ell}$  such that 
 $$\Dl = f^* (c_{\ell} D_{a,b}^{\ell} +d_{\ell}\mathcal{D}),$$  
 and $c_{\ell}D_{a,b}^{\ell}+d_{\ell} \mathcal{D}$ is a nef divisor on $\ov{{M}}_{2(g+1)}$.  Here:
 \begin{enumerate}
 \item $c_1D_{a,b}^1=a \lambda - b \delta_0-\sum_{i=0}^{\lfloor \frac{g}{2}\rfloor} \delta_{2i+1}$, where $c_1=\frac{1}{4}$,  $b \ge \frac{1}{2}$, and $a \ge 12b-1$;
 \item $c_2D_{a,b}^2=a \lambda - b \delta_0-\sum_{i=0}^{\lfloor \frac{g}{2}\rfloor} \delta_{2i+1}-\sum_{i=1}^{\lfloor \frac{g+1}{2}\rfloor} \frac{4}{3} \ \delta_{2i}$, where $c_2=\frac{4}{3}$, $b \ge \frac{8}{3}$, and $a \ge 12b-1$;
 \item $c_{g-1}D_{a,b}^{g-1}=a \lambda - b \delta_0-\sum_{i=1}^g\frac{i(n-2i+1)}{n-1}\delta_i
 -\frac{3g+2}{n-1}\delta_{g+1}$, \\
 where $c_{g-1}=\frac{1}{(g-1)}$, $b \ge \frac{1}{2}\max \{ \frac{i(n-2i+1)}{n-1} \}_{i=1}^{g}$, and $a \ge 12b-1$;
 \item $c_g D_{a,b}^{g}=a \lambda - b \delta_0-\sum_{i=1}^{g+1}\frac{i(n-2i+1)}{n-1}\delta_i$, where $c_g=1$,  $b \ge \frac{1}{2}\max \{ \frac{i(n-2i+1)}{n-1} \}_{i=1}^{g+1}$ and $a \ge 12b-1$.
 \end{enumerate}   
We may take $d_{1}, d_{2} \geq 0$.  For $\ell \in \{g-1,g\}$, we may choose any $d_{\ell}$ such that $(c_{\ell}D_{a,b}^{\ell}+d_{\ell} \mathcal{D})\cdot \mathcal{C} \ge 0$, where $\mathcal{C}$ is any $\operatorname{F}$ curve of type $(4)$.
 \end{proposition}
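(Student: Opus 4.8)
The plan is to prove the two assertions separately: first the pullback identity $\Dl = f^*(c_\ell D^\ell_{a,b} + d_\ell\mc{D})$, and then the nefness of $c_\ell D^\ell_{a,b} + d_\ell\mc{D}$ on $\ov{M}_{2(g+1)}$. For the identity, the starting point is that $f^*\mc{D} = 0$ by the cited \cite{GKM}*{Thm. 4.7}, so it suffices to prove $\Dl = f^*(c_\ell D^\ell_{a,b})$; the term $d_\ell\mc{D}$ is carried along purely to arrange nefness. Since the flag map contracts the Hodge class and the non-separating boundary ($f^*\lambda = 0$ because the Jacobian of the image is the constant abelian variety $E^{n}$, and $f^*\delta_0 = 0$ because the image curves have tree-like dual graphs), the pullback $f^*(c_\ell D^\ell_{a,b})$ is independent of $a$ and $b$; this is precisely why those parameters may be left free.

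To establish $\Dl = f^*(c_\ell D^\ell_{a,b})$ I would record the flag-map pullbacks of the separating boundary classes $\delta_i$ in the $B_j$-basis and compare with the closed forms for $\Dl$ in Theorem \ref{extremaldivisors}. The cleanest route, and the one matching the paper's methods, is to intersect both sides with the curve basis $\{F_{1,1,i} : 1\le i\le g\}$ of $N_1(\ov{M}_{0,n}/S_n,\Q)$ furnished by Corollary \ref{abasis}: the projection formula gives $f^*(c_\ell D^\ell_{a,b})\cdot F_{1,1,i} = (c_\ell D^\ell_{a,b})\cdot f_*F_{1,1,i}$, and I would match this term-by-term against the intersection numbers $\Dl\cdot F_{1,1,i}$ computed in Corollaries \ref{1}, \ref{2}, \ref{g-1}, and \ref{g}. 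Equality on a basis yields the class identity for all four values of $\ell$.

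For nefness I would invoke \cite{GKM}*{Thm. 0.3}: a divisor on $\ov{M}_{2(g+1)}$ meeting every $\F$-curve nonnegatively is nef if and only if its $f$-pullback is nef on $\ov{M}_{0,n}$. That pullback is $\Dl$, a semiample conformal blocks divisor, hence automatically nef; so the whole task reduces to checking that $D := c_\ell D^\ell_{a,b} + d_\ell\mc{D}$ is an $\F$-divisor, i.e. satisfies the five inequalities of Theorem \ref{F}. Inequality $(5)$ holds for free: every type-$(5)$ $\F$-curve is $f_*F_{a,b,c,d}$ for an $\F$-curve on $\ov{M}_{0,n}$, so $D\cdot F = \Dl\cdot F_{a,b,c,d}\ge 0$. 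Inequalities $(1)$, $(2)$, $(3)$ are arranged by the stated numerical constraints together with the fact (from \cite{GKM}*{Thm. 4.7}) that $\mc{D}$ satisfies $(1)$--$(4)$ strictly: since in every case the $\delta_1$-coefficient equals $1$, the hypothesis $a\ge 12b-1$ gives $(1)$, while $b\ge \tfrac12\max_i\{b_i\}$ together with the nonnegativity of the recorded coefficients gives $(2)$ and $(3)$, and adding a nonnegative multiple of $\mc{D}$ only improves all of these. The type-$(4)$ inequalities $b_i+b_j\ge b_{i+j}$ are the heart of the matter: for $\ell\in\{1,2\}$ they already hold for $c_\ell D^\ell_{a,b}$, so any $d_\ell\ge 0$ is admissible, whereas for $\ell\in\{g-1,g\}$ one uses that the $\delta$-coefficients of $\mc{D}$ contribute $+2ij\,d_\ell$ to $b_i+b_j-b_{i+j}$, so a sufficiently large $d_\ell$ (the condition displayed in the statement) forces type-$(4)$ nonnegativity.

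The step I expect to be the main obstacle is the verification of the type-$(4)$ inequalities, and in particular confirming for $\ell\in\{g-1,g\}$ that a finite choice of $d_\ell$ suffices; this is where the anomalous middle coefficient (the $\delta_{g+1}$-term, and the folding of indices past $g+1$) must be tracked carefully, since it changes the bookkeeping in exactly those inequalities. A secondary point requiring care throughout is the same $B_{g+1}$ normalization in the pullback computation: the middle boundary divisor is counted with a factor of two in the $B_j$-basis, so the intersection numbers $\delta_j\cdot f_*F_{1,1,i}$ must be taken in the folded form used in the proof of Theorem \ref{extremaldivisors} for the comparison at $i=g$ to come out correctly.
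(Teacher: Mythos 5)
Your proposal is correct and follows essentially the same route as the paper: the identity is reduced to the vanishing $f^*\mathcal{D}=0$ together with a comparison of classes (the paper uses the explicit pullback formula of \cite{GibneyCompositio}*{Lemma 2.4} against Theorem \ref{extremaldivisors}, which is equivalent to your intersection check against the basis $\{F_{1,1,i}\}$), and nefness is established exactly as you describe, by verifying the five $\F$-inequalities of Theorem \ref{F} and invoking \cite{GKM}*{Thm. 0.3}, with condition $(5)$ coming for free from the nefness of $\Dl$ and condition $(4)$ handled for $\ell\in\{g-1,g\}$ by adding a sufficiently large multiple of $\mathcal{D}$ (your computation that $\mathcal{D}$ contributes $2ij\,d_\ell$ to each type-$(4)$ inequality is the reason only finitely many checks are needed). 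Your cautionary remarks about the type-$(4)$ bookkeeping and the folding at the middle index are well placed but do not reflect any gap.
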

 
\begin{proof}
By \cite{GibneyCompositio}*{Lemma 2.4}, the pullback to $\ov{M}_{0,2(g+1)}$ of a divisor $D=a\lambda-\sum_{i=0}^{g+1}b_i \delta_i$ on 
 $\ov{M}_{2(g+1)}$ along the flag map is 
 $$f^*D=\sum_{j=2}^{g+1}(\frac{j(n-j)}{(n-1)}b_1-b_j)B_j, \ \ \mbox{ where } \ \ B_j=\sum_{J \subset \{1,\ldots,n\}, |J|=j}\delta_J,$$
 where $n=2(g+1)$.
 
Using this and the fact that $f^*\mathcal{D} = 0$,  it is straightforward to check that for $\ell \in \{1,2,g-1,g\}$, the divisors $c_{\ell}D_{ab}^{\ell}+d_{\ell} \mathcal{D}$ on $\ov{M}_{0,2(g+1)}$ pull back to $\Dl$.  So it remains to check that for each $\ell \in \{1,2,g-1,g\}$, the divisor $c_{\ell}D_{ab}^{\ell}+d_{\ell}\mathcal{D}$ is nef.  Our main tool for proving that divisors on $\ov{M}_{0,2(g+1)}$ are nef will be to check the conditions of Theorem \ref{F} and apply \cite{GKM}*{Thm. $0.3$}.
 



First, we will analyze the cases $\ell=1$ and $\ell =2$.

It is easy to check that conditions $(1)-(3)$ of Theorem \ref{F} hold for $D^{\ell}_{ab}$ for all $\ell \in \{1,2\}$ since we chose $a$ and $b$ to make this happen.

 Condition $(5)$ is just the combinatorial formulation that 
 $(c_{\ell}D_{ab}^{\ell}) \cdot F^{2(g+1)}_{i,j,k,\ell} \ge 0$,
where  $F^{2(g+1)}_{i,j,k,\ell}$ on $\overline{M}_n$ is the image of the $\F$-curve $F_{i,j,k,\ell}$ on $\overline{M}_{0,n}$
  under the flag map.  In other words, this is equivalent to 
 $$f^*(c_{\ell} D_{ab}^{\ell}) \cdot F_{i,j,k,\ell}=\Dl  \cdot F_{i,j,k,\ell} \ge 0,$$
 which holds since $\Dl$ is nef.

This leaves condition $(4)$.

We check  condition $(4)$ for $D^{1}_{ab}$.  Since $b_{k}$ only depends on the parity of $k$, we need only consider two cases.  If $i$ and $j$ have the same parity, then the equality reads $2 \geq 0$.  If $i$ and $j$ have opposite parity, then the inequality reads $1 \geq 1$.  So we may conclude that $D^{1}_{a,b}$ is nef. By \cite{GKM}*{Thm. 4.7} $\mathcal{D}$ is also nef,  and hence for any nonnegative $c_{1}, d_1$, the divisor $c_{1} D^{1}_{a,b}+d_{1}\mathcal{D}$ is nef.  


 We next check condition $(4)$ for $D^{2}_{ab}$.   Note that $b_i+b_j \geq 2$ while $b_{i+j} \leq \frac{4}{3}$, so $(4)$ holds.  Thus $D^{2}_{ab}$ is nef, and hence for any nonnegative $c_{2}, d_2$, the divisor $c_{2} D^{2}_{a,b}+d_{2}\mathcal{D}$ is nef.

 Next, we will analyze the cases $\ell=g-1$ and $\ell =g$.  The two divisors $D^{g-1}_{ab}$ and $D^{g}_{ab}$ are not nef by themselves, as condition $(4)$ does not always hold.  In particular, it is necessary to choose a sufficiently large $d_{g-1}$ and $d_g$.

As before, the hypotheses on $a$, $b$, $\alpha$, and $\beta$ ensure that conditions $(1)-(3)$ are satisfied .  Condition $(5)$ follows just as it did above since we know that $c_{\ell} D^{\ell}_{a,b}+d_{\ell}\mathcal{D}$ pulls back to a nef divisor on $\ov{M}_{0,n}$.  This leaves condition $(4)$.  We know that $\mathcal{D}$ has positive intersection with $\F$-curves of type $(4)$.  Therefore, we simply need to choose $d_{\ell}$ sufficiently large that $(c_{\ell}D_{a,b}^{\ell}+d_{\ell} \mathcal{D})\cdot \mathcal{C} \ge 0$ for all $\F$-curves of type $(4)$.  There are only finitely many such curves to check (or, only finitely many inequalities of type $b_{i} + b_{j} \ge b_{i+j}$), so this can be arranged.  Then $c_{\ell} D^{\ell}_{a,b}+d_{\ell}\mathcal{D}$ is $\F$-nef, and hence nef by \cite{GKM}*{Thm. $0.3$}.


\end{proof}

\begin{remark}
 If given $\ell \in \{1,2,g-1,g\}$, one could find $a$, $b$ and $d_{\ell}$ such that 
$c_{\ell} D_{a,b}^{\ell} +d_{\ell}\mathcal{D}$ is semiample, then it would follow from Proposition \ref{flagpullback} that the morphism given by $\Dl$ would factor through the flag locus.  At the time of this writing, there are no divisors on $\overline{M}_{2(g+1)}$ that are known to be nef but not semiample.
\end{remark}

\section*{References}
\begin{biblist}
	\bibselect{agsrefs}
\end{biblist}

\end{document}